\definecolor{mygreen}{RGB}{0,110,0}
\newtheorem{thm}{Theorem}
\newtheorem{defi}{Definition}
\newtheorem{prop}{Proposition}
\newtheorem{lem}{Lemma}
\newtheorem{rem}{Remark}
\newcommand{\argmin}[1]{\underset{{#1}}{\mathrm{argmin}}\;}
\newcommand{\R}[1]{\mathbb{R}^{#1}}
\newcommand{\st}{\qquad\text{s. t.}\qquad}
\newcommand{\norm}[1]{\left\|{#1}\right\|}
\newcommand{\scalpr}[1]{\left\langle{#1}\right\rangle}
\author{Pia Heins\footnote{Westf\"alische Wilhelms-Universit\"at M\"unster,
Institut f\"ur Numerische und Angewandte Mathematik, Einsteinstrasse 62, D 48149
M\"unster, Germany (pia.heins@wwu.de, martin.burger@wwu.de)} \and Michael
Moeller\footnote{Technische Universit\"at M\"unchen, Department of Computer
Science, Informatik 9, Boltzmannstrasse 3, D 85748 Garching, Germany
(m.moeller@gmx.net)} \and Martin Burger\footnotemark[1]}
\title{Locally Sparse Reconstruction Using the
$\ell^{1,\infty}$-Norm}
\begin{document}

\maketitle

\begin{abstract}
This paper discusses the incorporation of local sparsity information,
e.g.
in each pixel of an image, via minimization of the $\ell^{1,\infty}$-norm. We
discuss the basic properties of this norm when used as a regularization
functional and associated optimization problems, for which we derive equivalent
reformulations either more amenable to theory or to numerical computation.
Further focus of the analysis is put on the locally 1-sparse case, which is
well motivated by some biomedical imaging applications.

Our computational approaches are based on alternating direction methods of
multipliers (ADMM) and appropriate splittings with augmented Lagrangians. Those
are tested for a model scenario related to dynamic positron emission
tomography (PET), which is a functional imaging technique in nuclear medicine.

The results of this paper provide insight into the potential
impact of regularization with the $\ell^{1,\infty}$-norm for
local sparsity in appropriate settings. However, it also
indicates several shortcomings, possibly related to the non-tightness of the
functional as a  relaxation of the $\ell^{0,\infty}$-norm.
\end{abstract}

\section{Introduction}

Sparse reconstructions based on minimizing $\ell^1$-norms have gained huge
attention in signal and image processing, inverse problems, and compressed
sensing recently. Their main feature of delivering sparse reconstructions, in
some cases provably the same as with minimizing the nonconvex
$\ell^0$-functional (cf. \cite{Donoho2001, Cohen2009,
Candes2005, Tropp2004, Juditski2010}), is attractive for many applications and has led to
remarkable development in theory and numerics. However, the overall sparsity
enforced by minimal $\ell^1$-norm is not the only kind of prior information
available in practice. Strong recent directions of research are related to
unknowns being matrices, with prior information being e.g. low rank incorporated
via nuclear norm minimization or block sparsity (or collaborative sparsity)
incorporated by minimization of $\ell^{p,1}$-norms with $p \in (1,\infty)$.
Such regularizations have been studied for instance by Fornasier and
Rauhut \cite{Fornasier2008} and Teschke and Ramlau \cite{Teschke2007}
under the name of \emph{joint sparsity}.
Furthermore, details as well as applications of such types of joint sparsity
priors can for instance be found in \cite{Bach2011, Bach2012, Bach2012b}.
Note that there exist several different definitions of mixed $\ell^{p,q}$-norms
in the literature (cf. for instance \cite{Kowalski2009, TroppRelax2006}).
We act in accordance with the definition of mixed norms as proposed in
\cite[Definition 1]{Kowalski2009}.
We refer the reader to \cite{Kowalski2009, Watson1992} and the references
therein for more details on mixed matrix norms.
As a special type of mixed norms we discuss the following type of
sparsity-functional for matrices, namely the $\ell^{1,\infty}$-norm
\begin{align}
	\Vert U \Vert_{1,\infty} = \max_{i\in\{1,\ldots,M\}} \sum_{j=1}^N |u_{ij}|	,
	\label{l1infinity}
\end{align}
where $U\in\R{M\times N}$.
Our motivation is a {\em local sparsity} that frequently appears in inversion
with some spatial dimensions (related to the index $i$) and at least one
additional dimension such as time or spectral information in imaging (related to
the index $j$). The main ingredients in the reconstruction problems we want to
consider are a dictionary $B \in \mathbb{R}^{T\times N}$ (encoding basis
elements), with $N$ basis vectors, $T$ e.g. time steps and $N>T$, a forward
operator $A \in \mathbb{R}^{L\times M}$, with $M$ pixels, $L$ depending on the
application (typically $L<M$ being a number of detectors), and the measured data
$W\in\R{L\times T}$, which yield an inverse problem of the form
\begin{align}
	A U B^T = W. 
\label{basicequation}
\end{align}
The unknown matrix $U \in \mathbb{R}^{M \times N}$ is the collection of the
coefficients with respect to the basis or dictionary encoded in $B$.
Frequently a good dictionary for the local behaviour in the additional
dimension can be found such that the vector $u_{i\bullet}$ is expected to be sparse, which
means we want to minimize
\begin{align*}
	\Vert U \Vert_{0,\infty} = \max_{i\in\{1,\ldots,M\}} \Vert u_{i\bullet} \Vert_0
\end{align*}
with $0^0:=0$ subject to \eqref{basicequation}.
A natural relaxation is to consider the minimization of \eqref{l1infinity}
subject to \eqref{basicequation} instead.

\noindent Examples of applications with such kind of information are:
\begin{itemize}
	\item {\em Dynamic Positron Emission Tomograpy} (PET, and similar problems in
		SPECT, cf. \cite{Wernick2004, AJReader2007, Gunn2002, Gullberg2010}), where
		$i$ refers to the pixel number of the image to be reconstructed and $B$ is a
		local dictionary of (discretized) time-basis functions. The operator $A$ is
		the PET matrix (roughly a sampled Radon-transforms with some corrections) and
		the basis functions are generated by kinetic modeling, i.e. as solutions of
		simple linear ODE systems with unknown parameters. The dictionary is generated
		indirectly by a dictionary for the parameters in the ODE. Clearly one often
		looks for a unique parameter value, i.e. a representation by only one basis
		function in each pixel, an ultimate kind of sparsity.
	\item {\em Fluorescence-lifetime imaging microscopy} (FLIM, cf.
	\cite{Elson2002, Kremers2008})
		where $A$ is a convolution in space or the identity and $B$ contains different
		functions, which characterize the photon decay and are also convolved in time.
		Considering different basis functions for different fluorophores local
		sparsity may enhance the unmixing process.
	\item {\em ECG Cardiac Activation Time Reconstruction} (cf.
		\cite{Oosterom1987, Huiskamp1997}, where $i$ refers to a grid
		point on the epicardial surface and $B$ is a dictionary of step functions
		parametrized by the activation time. Again one looks for a single activation
		time in each grid point, i.e. an ultimately sparse local representation
		usually not formulated this way.
	\item {\em Spectral- and Hyperspectral Imaging} (cf.
	\cite{BioucasDias2012, Esser2012}), where the operator $A$ is often a
		convolution and $B$ is a dictionary of spectral signatures of expected
		elements. Currently resolution is hardly small enough that pixels resolve pure
		materials, but one may easily assume that only very few materials are
		contained in each pixel, which corresponds also to the above local sparsity
		prior.
\end{itemize} 

Our analysis below will demonstrate that it may be advantageous to consider a
combination of minimizing \eqref{l1infinity} with classical $\ell^1$-sparsity.
We will therefore also investigate the more general problem
\begin{equation}
		\min_{U\in\mathbb{R}^{M\times N}} \alpha\lVert
		 U\rVert_{1,\infty} + \beta \lVert
		 U\rVert_{1,1} \quad\text{s.t.}\quad AUB^T=W  \; .
	\label{eq:varmodel0}
\end{equation}
Besides the constrained model  \eqref{eq:varmodel0} we shall also investigate
the unconstrained model
\begin{equation}
		\min_{U\in\mathbb{R}^{M\times N}} \frac{1}{2}\lVert
		\gamma(AUB^{T}-W)\rVert^2_{F}+\alpha\lVert U\rVert_{1,\infty}
		+ \beta \lVert U\rVert_{1,1}   \; ,
	\label{eq:varmodel1}
\end{equation}
which is suited to deal with noisy data. Here we use the Frobenius norm
for the first part. Moreover, $\gamma^T \gamma$ shall be a
positive definite weighting matrix (in a statistical formulation the inverse covariance matrix of the noise). Since
in basically all practical applications one only looks for positive combinations of
basis elements, we shall put a particular emphasis on the case of an
\emph{additional nonnegativity constraint} on $U$ in \eqref{eq:varmodel0},
respectively \eqref{eq:varmodel1}.

We will investigate some basic properties of the model as well as reformulations
with additional inequality constraints. This will make the problem more easily
accessible to detailed analysis and numerical methods. Based on the latter we
also investigate the potential to exactly reconstruct locally 1-sparse signals
by convex optimization techniques.
Before going into the details, we therefore state the following
fundamental definition:
\begin{defi}[$s$-Sparsity]
~\\
	A signal $z$ is called \emph{s-sparse} if it holds
	$$ \norm{z}_0 = s \; .$$
\end{defi}

\subsection{Contributions}

Instead of considering usual sparsity regularizations componentwise on the
unknown matrix such as minimizing the $\ell^{1,1}$-norm, Yuan and Lin
\cite{Yuan2006} considered for instance a generalization of the lasso method (cf.
\cite{Tibshirani1996}), which they call \emph{group lasso}.
The original lasso method (cf. also the review \cite{Tibshirani2011}) is a
shrinkage and selection method for linear regression, i.e. it basically
minimizes the sum of a squared $\ell^2$ data term and an $\ell^1$-regularization
term.
The group lasso, however, generalizes this method by using a slightly different
regularization term, i.e. it minimizes the $\ell^1$-norm of a weighted
$\ell^2$-norm.
Other group lasso generalizations consider the minimization of the $\ell^1$-norm
of the $\ell^{\infty}$-norm, i.e. the $\ell^{\infty,1}$-regularization, cf.
\cite{Vogt2010, Quattoni2009, Schmidt2008}.
Later the $\ell^{2,1}$-regularization was further generalized by Fornasier and
Rauhut \cite{Fornasier2008} and Teschke and Ramlau \cite{Teschke2007}, which
then became known under the term \emph{joint sparsity}. This method mainly
consists of minimizing $\ell^{p,1}$-norms, which are used to include even more
prior knowledge about the unknown such as additional structures like block
sparsity (or collaborative sparsity).

However, for many applications, such as dynamic positron emission tomography or
unmixing problems, it turns out to be useful to incorporate another type of
sparsity. Enhancing the idea of usual $\ell^1$-sparsity to what we call
\emph{local sparsity} is one of the main contributions of this paper.
Local sparsity turns out to be beneficial when working on problems including
inversion with some spatial dimensions and at least one additional dimension
such as time or spectral information.

In order to incorporate the idea of local sparsity, we motivate the use of the
$\ell^{1,\infty}$-norm as regularization functional in a variational
framework for dictionary based reconstruction of matrix completion problems.
Working with the $\ell^{1,\infty}$-norm turns out to be rather difficult, which
is why we additionally propose alternative formulations of the problem.
Besides this, we discuss basic properties of the $\ell^{1,\infty}$-functional
and potential exact recovery.
In addition, we propose a splitting algorithm based on alternating direction
method of multipliers (ADMM) \cite{Glowinski1975, Gabay1976} for the
solution of $\ell^{1,\infty}$-regularized problems and show computational
results for synthetic examples.

\subsection{Organization of this Work}

This paper is organized as follows:

In Section \ref{sec:basic_properties_and_formulations} we will provide the
fundamentals for our method. We will discuss basic
properties of the local sparsity promoting $\ell^{1,\infty}$-regularization such
as different problem formulations, existence and uniqueness. Moreover, we will
give attention to subdifferentials and source conditions for these different
formulations. After showing the equivalence of those formulations, we will
analyze the asymptotics in case that the regularization parameter tends to
infinity.

Section
\ref{sec:scaling_conditions_for_exact_recovery_of_the_constrained_problem}
shall be devoted to the analysis of exact recovery of locally 1-sparse
solutions, where we will introduce certain conditions for exact recovery.

In Section \ref{sec:algorithms} we will propose an algorithm for the
reconstruction with local sparsity, which is based upon the alternating
direction method of multipliers (ADMM) \cite{Glowinski1975, Gabay1976}. 
Computational experiments using this algorithm can be found later on in Section
\ref{sec:numerical_results}.

On the basis of the previous sections, we will apply our model to dynamic positron
emission tomography, which will be used to visualize myocardial perfusion, in
Section \ref{sec:numerical_results}.
We will firstly give a short introduction to the medical and technical
background of dynamic positron emission tomography, before we will briefly
discuss a model for blood flow and tracer exchange, i.e. kinetic modeling.
This will yield the same inverse problem, which we aspire to solve in this
paper.
Afterwards, we will apply the algorithm, which will be deduced in Section
\ref{sec:algorithms}, to artificial data in order to verify our model and
illustrate its potential.

\section{Basic Properties and Formulations}
\label{sec:basic_properties_and_formulations}

In this section we are going to introduce some equivalent formulations of the
main problems \eqref{eq:varmodel0} and \eqref{eq:varmodel1}, which we use for
the analysis later on.
Additionally, we point out some basic properties like convexity, existence and
potential uniqueness.
Furthermore, we propose the subdifferential and discuss a source condition.
Moreover, we prove the equivalence of another reformulation, which
improves the accessibility of the problem for numerical computation.
Finally, we investigate the limit for $\alpha\rightarrow\infty$ and observe what
happens to the optimal solution in that case.

\subsection{Problem Formulations}

Since in most applications a nonnegativity constraint is reasonable, we
firstly restrict \eqref{eq:varmodel0} and \eqref{eq:varmodel1} to this case.
For the sake of simplicity, we define
$$ G:=\left\{U\in\R{M\times N}\;|\;u_{ij}\geq 0 \quad
\forall\;i\in\left\{1,\ldots M\right\},\;j\in\left\{1,\ldots N\right\}\right\}
\; . $$
Hence we have
\begin{align}
	\min\limits_{U\in G} \; \left(\alpha
	\max\limits_{i\in\left\{1,\ldots,M\right\}} \sum\limits_{j=1}^N u_{ij} +
	\beta\sum\limits_{i=1}^M\sum\limits_{j=1}^N u_{ij}\right) \quad\text{s.t.}\quad
	AUB^T=W
	\label{eq:pos_const_prob}
\end{align}
for the constrained problem and
\begin{align}
	\min\limits_{U\in G} \; \left(\frac{1}{2}\left\|
		\gamma(AUB^{T}-W)\right\|^2_{F} + \alpha
		\max\limits_{i\in\left\{1,\ldots,M\right\}} \sum\limits_{j=1}^N u_{ij} +
	\beta\sum\limits_{i=1}^M\sum\limits_{j=1}^N u_{ij}\right)
	\label{eq:pos_unconst_prob}
\end{align}
for the unconstrained problem.

In order to make these problems more easily accessible, we reformulate the
$\ell^{1,\infty}$-term in \eqref{eq:pos_const_prob} and
\eqref{eq:pos_unconst_prob} via a linear constraint:
\begin{thm}[Nonnegative $\ell^{1,\infty}$-Regularization]
~\\
	Let $F\!:\!\mathbb{R}^{M\times N}\longrightarrow \mathbb{R}
	\cup\left\{+\infty\right\}$ be a convex functional. Let
	$\widehat{U}$ be a minimizer of
	\begin{align}
		\min\limits_{U\in G} \; \left(F\left( U \right) + \alpha \max\limits_{i
		\in\{1,\ldots,M\}
		}
		\sum\limits_{j=1}^N u_{ij}\right)
		\label{eq:allg_max_form}
	\end{align}
	and let $(\bar{U},\bar{v})$ minimize
	\begin{align}
		\min\limits_{U\in G,\;v\in \R{+}} \; F\left( U \right) +
		v\quad\text{s.t.}\quad \alpha\sum\limits_{j=1}^N u_{ij}\leq v
		\quad\forall i\in\{1,\ldots,M\}\; .
		\label{eq:allg_U_v_form}
	\end{align}
	Then $\bar{U}$ is a minimizer of \eqref{eq:allg_max_form} and
	$(\widehat{U},\hat{v})$ with
	$\hat{v}=\alpha\max\limits_{i\in\{1,\ldots,M\}}\sum\limits_{j=1}^N\hat{u}_{ij}$
	minimize \eqref{eq:allg_U_v_form}.
\label{th:ell1inf_equiv}
\end{thm}
\begin{proof}
	~\\
	By introducing the constraint $$v=\alpha\max\limits_{i
		\in\{1,\ldots,M\}
		}
		\sum\limits_{j=1}^N
	u_{ij}$$ in problem \eqref{eq:allg_max_form}, we obtain 
	\begin{align*}
		\min\limits_{U\in G}\; F\left(U\right) +
		v\quad\text{s.t.}\quad \alpha \max\limits_{i
		\in\{1,\ldots,M\}}
		\sum\limits_{j=1}^N u_{ij}=v\; ,
	\end{align*}
	which is an equivalent formulation of \eqref{eq:allg_max_form}.
	Now let us consider the inequality-constrained problem
	\eqref{eq:allg_U_v_form}.
	In case that
	$\alpha\max\limits_{i\in\{1,\ldots,M\}}\sum\limits_{j=1}^Nu_{ij}<v$
	holds in \eqref{eq:allg_U_v_form}, the couple $(U,v)$
	cannot minimize \eqref{eq:allg_U_v_form},
	since we can choose $\bar{v}<v$, which is still feasible and reduces the
	objective.
	Thus in the optimal case of problem \eqref{eq:allg_U_v_form} the
	inequality constraint yields the equality constraint,
	i.e.
	$$
	\alpha\max\limits_{i\in\{1,\ldots,M\}}\sum\limits_{j=1}^N\bar{u}_{ij}=\bar{v}$$
	and we see that $\bar{U}$ is a minimizer of \eqref{eq:allg_max_form} and
	$(\widehat{U},\hat{v})$ with
	$\hat{v}=\alpha\max\limits_{i\in\{1,\ldots,M\}}\sum\limits_{j=1}^N\hat{u}_{ij}$
	minimize \eqref{eq:allg_U_v_form}.
\end{proof}

By using Theorem \ref{th:ell1inf_equiv} and defining $F$ as the sum of the
non-negative $\ell^{1,1}$-term and the characteristic function $\chi$
for the data constraint, i.e.
\begin{align*}
	F(U) = \beta\sum\limits_{i=1}^M\sum\limits_{j=1}^N u_{ij} +
	\chi_{\{U\in\R{M\times N}\,|\,AUB^T=W\}}
\end{align*}
with
\begin{align*}
	\chi_{\{U\in\R{M\times N}\,|\,AUB^T=W\}} = 
	\begin{cases}
		0 & \text{if } AUB^T=W, \\
		\infty & \text{else,}
	\end{cases}
\end{align*}
we are able to reformulate problem \eqref{eq:pos_const_prob}
as
\begin{align}
	\min\limits_{U\in G,\;v\in\R{+}} \; \beta\sum\limits_{i=1}^M\sum\limits_{j=1}^N
	u_{ij} + \chi_{\{U\in\R{M\times N}\,|\,AUB^T=W\}}
	+ v \quad \text{ s.t. } \quad \alpha\sum\limits_{j=1}^N u_{ij}\leq v \; .
	\label{eq:Uv_constr_prob}
\end{align} 
Likewise we obtain the unconstrained problem from \eqref{eq:pos_unconst_prob} as
\begin{align}
	\min\limits_{U\in G,\;v\in\R{+}} \;
	\frac{1}{2}\left\|\gamma\left(AUB^T-W\right)\right\|_F^2 +
	\beta\sum\limits_{i=1}^M\sum\limits_{j=1}^N u_{ij} + v \quad \text{ s.t. }
	\quad \alpha\sum\limits_{j=1}^N u_{ij}\leq v
	\label{eq:Uv_unconstr_prob}
\end{align}
by using the sum of the data fidelity and the non-negative $\ell^{1,1}$-term as
functional $F$.

In order to understand the potential exactness of sparse reconstructions, we
will focus on the analysis of \eqref{eq:Uv_constr_prob} in Section
\ref{sec:scaling_conditions_for_exact_recovery_of_the_constrained_problem},
however, \eqref{eq:Uv_unconstr_prob} is clearly more useful in practical
situations when the data are not exact. Thus it builds the basis for most of the
further analysis and in particular for computational investigations.

However, we firstly propose another formulation, which shall make the problem
more easily accessible for the numerical solution.
For this reformulation we show in Subsection \ref{equivalence_of_formulations}
that $\max\limits_{i\in\{1,\ldots,M\}}\sum\limits_{j=1}^N u_{ij}(\alpha)$
depends continuously on the regularization parameter $\alpha$ in problem
\eqref{eq:allg_max_form}.
Then we prove that $\alpha\mapsto\max\limits_{i\in\{1,\ldots,M\}}\sum\limits_{j=1}^N
u_{ij}(\alpha)$ is monotonically decreasing and finally we analyze its limits
for $\alpha$ going to zero and infinity. We can then show that under certain
circumstances the support of the minimizers of \eqref{eq:allg_max_form} and
\begin{align}
	\min_{U\in G} \; F\left(U\right)\quad\text{ s.t. }\quad \sum\limits_{j=1}^N
	u_{ij}\leq\tilde{v}
\label{eq:easy_problem}
\end{align}
coincide for a certain fixed $\tilde{v}$.
Thus instead of regularizing with $\alpha$ we can now use $\tilde{v}$ as a
regularization parameter.

\subsection{Existence and Uniqueness}

Let us now discuss some basic properties of $\ell^{1,\infty}$-regularized
variational problems.
We show that there exists a minimizer for these problems and discuss potential
uniqueness.

\subsubsection{Existence}

Since the $\ell^{1,\infty}$-regularization functional is a norm it is also
convex.
This holds for the non-negative formulation \eqref{eq:nonneg_func} as well,
which directly follows from the fact that we add the characteristic function of
a convex set to the convex norm functional:

\begin{prop}[Convexity of the Non-Negative $\ell^{1,\infty}$-Functional]
\label{th:convexity_pos_ell1infty}
~\\
	Let be $U\in\R{M\times N}$. The functional
	\begin{equation}
		\mathcal{R}\!\left(U\right) := 
		\begin{cases}
			\max\limits_{i\in\{1,\ldots,M\}}\sum\limits_{j=1}^N u_{ij} & \text{if }\; u_{ij}\geq 0, \\
			\infty & \text{else,}
		\end{cases}
	\label{eq:nonneg_func}
	\end{equation}
	 is convex.
\end{prop}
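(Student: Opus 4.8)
The plan is to show that the functional $\mathcal{R}$ defined in \eqref{eq:nonneg_func} is convex by recognizing it as a sum of two manifestly convex pieces. The key observation is that $\mathcal{R}$ can be rewritten as the characteristic function of the convex cone $G$ plus the $\ell^{1,\infty}$-norm restricted to that cone. Since the $\ell^{1,\infty}$-norm is a norm (and hence convex on all of $\R{M\times N}$), and the characteristic function of a convex set is convex, the sum is convex, and $\mathcal{R}$ inherits convexity.

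First I would make precise the decomposition $\mathcal{R}(U) = \Vert U \Vert_{1,\infty} + \chi_G(U)$, where $\chi_G$ takes the value $0$ on $G$ and $+\infty$ otherwise. On the nonnegative cone $G$ the absolute values in \eqref{l1infinity} disappear, so $\Vert U \Vert_{1,\infty} = \max_i \sum_j u_{ij}$ agrees with the first branch of $\mathcal{R}$; off the cone, $\chi_G$ forces the value $+\infty$, matching the second branch. Next I would invoke the two standard facts: the norm $\Vert \cdot \Vert_{1,\infty}$ is convex because every norm satisfies the triangle inequality and positive homogeneity; and $\chi_G$ is convex because $G$ is a convex set (it is the intersection of the half-spaces $\{u_{ij} \geq 0\}$). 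Finally, the sum of two convex functionals mapping into $\mathbb{R}\cup\{+\infty\}$ is again convex, which yields the claim.

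Alternatively, one could argue directly without passing through the norm: the maps $U \mapsto \sum_{j=1}^N u_{ij}$ are linear for each fixed $i$, hence convex, and the pointwise maximum of finitely many convex functions is convex, so $U \mapsto \max_i \sum_j u_{ij}$ is convex; adding $\chi_G$ then preserves convexity. I would probably present the decomposition argument as the main line since the surrounding text already emphasizes writing $F$ as a sum involving a characteristic function, but mention the pointwise-maximum viewpoint as a remark.

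I do not expect any genuine obstacle here: the statement is essentially a bookkeeping exercise in combining elementary convexity closure properties. The only point requiring a line of care is verifying that the first branch of $\mathcal{R}$ truly coincides with $\Vert U \Vert_{1,\infty}$ precisely on $G$ (so that the absolute values may be dropped), and confirming that the extended-real-valued sum is well defined, i.e. no $\infty - \infty$ arises, which is automatic since both summands are bounded below. Thus the proof will be short, and the ``hard part,'' such as it is, is merely choosing the cleanest of several equivalent phrasings.
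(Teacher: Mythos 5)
Your proof is correct and follows essentially the same route as the paper, which likewise justifies the claim by writing $\mathcal{R}$ as the $\ell^{1,\infty}$-norm plus the characteristic function of the convex nonnegativity cone and appealing to the convexity of norms and of characteristic functions of convex sets. Your additional remarks (the pointwise-maximum-of-linear-maps argument and the check that the absolute values can be dropped on $G$) are fine but not needed beyond what the paper already records.
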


\begin{prop}[Lower Semi-Continuity]
\label{th:lower_semi-continuity}
~\\
	Let $F\!:\!\mathbb{R}^{M\times N}\longrightarrow \mathbb{R}
	\cup\left\{+\infty\right\}$ be a convex functional.
	Then \eqref{eq:allg_max_form} and \eqref{eq:allg_U_v_form} are lower
	semi-continuous.
\end{prop}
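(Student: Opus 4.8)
The plan is to read both statements as assertions about the \emph{objective functionals}: once the set constraints are absorbed into indicator functions, the objectives of \eqref{eq:allg_max_form} and \eqref{eq:allg_U_v_form} are lower semi-continuous maps on $\R{M\times N}$, respectively on $\R{M\times N}\times\mathbb{R}$. The whole argument rests on three elementary stability facts: the pointwise maximum of finitely many continuous functions is continuous; the indicator function $\chi_C$ of a closed set $C$ is lower semi-continuous; and a finite sum of lower semi-continuous functions, none of which takes the value $-\infty$, is again lower semi-continuous and well defined in $\mathbb{R}\cup\{+\infty\}$.

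First I would treat \eqref{eq:allg_max_form}. Absorbing the constraint $U\in G$ as an indicator, its objective becomes
\[
	J_1(U) = F(U) + \alpha\max_{i\in\{1,\ldots,M\}}\sum_{j=1}^N u_{ij} + \chi_G(U).
\]
The middle term is a finite maximum of the linear, hence continuous, maps $U\mapsto\sum_{j}u_{ij}$, and is therefore continuous and in particular lower semi-continuous. The set $G$ is an intersection of closed half-spaces and thus closed, so $\chi_G$ is lower semi-continuous. Since $F$ is convex and lower semi-continuous, and since all summands take values in $\mathbb{R}\cup\{+\infty\}$, the sum $J_1$ is lower semi-continuous.

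Next I would handle \eqref{eq:allg_U_v_form} in exactly the same spirit on the product space. Its objective reads
\[
	J_2(U,v) = F(U) + v + \chi_G(U) + \chi_{\mathcal{C}}(U,v),
\]
where $\mathcal{C} = \{(U,v)\;|\;\alpha\sum_{j=1}^N u_{ij}\leq v\ \text{ for all } i\in\{1,\ldots,M\}\}$. The map $(U,v)\mapsto v$ is continuous; $\mathcal{C}$ is again an intersection of finitely many closed half-spaces and hence closed, so $\chi_{\mathcal{C}}$ is lower semi-continuous; and $F$ together with $\chi_G$ are lower semi-continuous as before. Consequently $J_2$ is once more a finite sum of lower semi-continuous functions and therefore lower semi-continuous.

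The only genuinely delicate point is the lower semi-continuity of $F$ itself, since in finite dimensions a proper convex function need not be lower semi-continuous at boundary points of its domain; the statement should thus be understood for closed (equivalently, lower semi-continuous) convex $F$. This is precisely what the two concrete functionals of the paper are: an affine $\ell^{1,1}$-term plus the indicator of the closed affine set $\{AUB^T=W\}$ in \eqref{eq:Uv_constr_prob}, and a Frobenius-norm data term plus an affine term in \eqref{eq:Uv_unconstr_prob}. Granting this, the conclusion follows entirely from the stability of lower semi-continuity under adding continuous terms and indicators of closed sets, and under taking finite sums.
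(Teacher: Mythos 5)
Your proof is correct (under the closedness reading you yourself make explicit) but follows a genuinely different route from the paper. The paper first establishes convexity of both objectives --- invoking Proposition \ref{th:convexity_pos_ell1infty} for \eqref{eq:allg_max_form} and the equivalence of Theorem \ref{th:ell1inf_equiv} for \eqref{eq:allg_U_v_form} --- and then asserts that, since the problem is finite dimensional and involves only linear inequalities, lower semi-continuity follows directly from convexity. You instead decompose each objective into a finite sum of lower semi-continuous building blocks (a continuous finite maximum of linear maps, indicators of closed polyhedral sets, and $F$ itself) and invoke the stability of lower semi-continuity under finite sums avoiding $-\infty$. Your route is more elementary and, importantly, more careful on the one genuine subtlety: an extended-real-valued convex function on $\R{M\times N}$ need \emph{not} be lower semi-continuous (it can fail at boundary points of its effective domain), so the step ``convexity implies lower semi-continuity in finite dimensions'' is not valid as stated once $F$ is allowed to take the value $+\infty$ --- which is precisely the situation for the constrained problem \eqref{eq:Uv_constr_prob}, where $F$ contains the characteristic function of the affine set $\left\{U\,|\,AUB^T=W\right\}$. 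You repair this by reading the hypothesis as ``closed convex $F$'' and verifying that both concrete functionals used later in the paper (affine $\ell^{1,1}$-term plus indicator of a closed affine set; quadratic data term plus affine term) are indeed closed. What the paper's argument buys is brevity and reuse of already-proven results; what yours buys is a rigorous, self-contained argument that exposes and fixes the hidden assumption. One cosmetic omission: in $J_2$ you should also carry the indicator $\chi_{\{v\geq 0\}}$ coming from the constraint $v\in\R{+}$, but this is the indicator of a closed half-space and is handled by exactly the same reasoning.
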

\begin{proof}
~\\
	Since $F(U)$ is convex, we can conclude that \eqref{eq:allg_max_form} is convex
	by using Proposition \ref{th:convexity_pos_ell1infty}. Due to Theorem
	\ref{th:ell1inf_equiv} we deduce that \eqref{eq:allg_U_v_form} is convex as well.
	
	Our problem is finite dimensional and hence all norms are equivalent. In
	addition it contains only linear inequalities. Therefore, we can deduce lower
	semi-continuity directly from convexity, which we already have. 
\end{proof}
Let us now analyze the existence of minimizers of the different
problems.
\begin{thm}[Existence of a Minimizer of the Constrained
Problem]
\label{th:existence_constr_prob}
~\\
	Let there be at least one $\widetilde{U}\in G$ that satisfies
	$A\widetilde{U}B^T=W$.
	Then there exists a minimizer of the constrained problems
	\eqref{eq:pos_const_prob} and \eqref{eq:Uv_constr_prob}.
\end{thm}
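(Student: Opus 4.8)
The plan is to establish existence via the direct method of the calculus of variations: construct a minimizing sequence, extract a convergent subsequence using coercivity/compactness, and pass to the limit using lower semi-continuity. Since we are in finite dimensions, the tricky compactness issues of infinite-dimensional problems disappear, so the main work is verifying that a minimizing sequence stays in a bounded (hence precompact) set. I would carry out the argument for \eqref{eq:pos_const_prob} and then invoke Theorem \ref{th:ell1inf_equiv} to transfer the conclusion to \eqref{eq:Uv_constr_prob}.

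**First I would** observe that the feasible set $\mathcal{F} := \{U \in G \mid AUB^T = W\}$ is nonempty by hypothesis (it contains $\widetilde{U}$) and is closed and convex, being the intersection of the closed convex cone $G$ with the affine subspace $\{AUB^T = W\}$. The objective functional $J(U) := \alpha \max_i \sum_j u_{ij} + \beta \sum_{i,j} u_{ij}$ is continuous and, by Proposition \ref{th:convexity_pos_ell1infty} together with convexity of the $\ell^{1,1}$-term, convex; moreover its infimum over $\mathcal{F}$ is finite since $J(\widetilde{U}) < \infty$. Let $(U^{(k)})$ be a minimizing sequence in $\mathcal{F}$, so $J(U^{(k)}) \to \inf_{\mathcal{F}} J =: m$.

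**The key step** is coercivity: I must show $(U^{(k)})$ is bounded. Here the nonnegativity constraint does the essential work. For $U \in G$ every entry satisfies $u_{ij} \geq 0$, so
\begin{align*}
	\sum_{i=1}^M \sum_{j=1}^N u_{ij} = \Vert U \Vert_{1,1}
\end{align*}
is exactly the $\ell^1$-type sum of all entries, and since all entries are nonnegative this equals $\sum_i \sum_j |u_{ij}|$. Because $\beta > 0$, the bound $J(U^{(k)}) \leq m+1$ for large $k$ forces $\sum_{i,j} u_{ij}^{(k)} \leq (m+1)/\beta$, which (again using $u_{ij} \geq 0$) bounds every entry and hence bounds $(U^{(k)})$ in, say, the Frobenius norm. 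Thus the sequence lies in a closed bounded subset of the finite-dimensional space $\R{M\times N}$, which is compact by Bolzano--Weierstrass; pass to a subsequence $U^{(k_l)} \to \bar U$.

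**Finally,** since $\mathcal{F}$ is closed, the limit $\bar U$ is feasible, $\bar U \in \mathcal{F}$. By continuity (equivalently lower semi-continuity, established in Proposition \ref{th:lower_semi-continuity}) of $J$ we obtain $J(\bar U) \leq \liminf_l J(U^{(k_l)}) = m$, so $\bar U$ is a minimizer of \eqref{eq:pos_const_prob}. To conclude for \eqref{eq:Uv_constr_prob}, I apply Theorem \ref{th:ell1inf_equiv} with $F(U) = \beta\sum_{i,j} u_{ij} + \chi_{\{AUB^T=W\}}$: the theorem states that $(\bar U, \hat v)$ with $\hat v = \alpha \max_i \sum_j \bar u_{ij}$ minimizes the $(U,v)$-formulation whenever $\bar U$ minimizes the max-formulation, so existence transfers immediately. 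The case $\beta = 0$, if it needs separate treatment, would require arguing coercivity directly from the $\ell^{1,\infty}$-term and the data constraint rather than from the $\ell^{1,1}$-penalty; I expect the coercivity bound to be the only genuine obstacle, and the nonnegativity assumption embedded in $G$ is precisely what makes it routine.
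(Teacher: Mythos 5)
Your proof is correct and takes essentially the same approach as the paper: the direct method in finite dimensions, with the nonnegativity constraint in $G$ turning a bound on the linear objective into a bound on every entry $u_{ij}$, followed by an appeal to Theorem \ref{th:ell1inf_equiv} to transfer existence to the formulation \eqref{eq:Uv_constr_prob} (the paper phrases the compactness step via sublevel sets rather than minimizing sequences, which is an immaterial difference). The only point where you are less complete is the case $\beta=0$: the paper handles both parameters symmetrically, bounding $\norm{U}_{1,\infty}\leq \xi/\alpha$ if $\alpha\neq 0$ or $\norm{U}_{1,1}\leq \xi/\beta$ if $\beta\neq 0$, and your deferred fix (coercivity from the $\ell^{1,\infty}$-term, using $u_{ij}\leq\max_{i}\sum_{j}u_{ij}$ on $G$) is exactly that other branch, so it is indeed routine.
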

\begin{proof}
~\\
	Since we only have linear parts, we see that
	\begin{align*}
		F(U) := \beta\sum\limits_{i=1}^M\sum\limits_{j=1}^N u_{ij} + 
		\chi_{\{U\in\R{M\times N}\,|\,AUB^T=W\}}
	\end{align*}
	is convex.
	Then Proposition \ref{th:lower_semi-continuity} leads to lower semi-continuity
	of \eqref{eq:pos_const_prob} and \eqref{eq:Uv_constr_prob}.
	
	We still need to show that there exists a $\xi$ such that the sublevel set
	$$	\mathcal{S}_{\xi} = \left\{U\in G \; \left| \;
		\beta\sum\limits_{i=1}^M\sum\limits_{j=1}^N u_{ij} +
		\alpha\max\limits_{i\in\{1,\ldots,M\}}\sum\limits_{j=1}^N u_{ij} + 
		\chi_{\{U\in\R{M\times N}\,|\,AUB^T=W\}}
		\leq
		\xi \right.\right\} $$
	is compact and not empty.
	
	With $\widetilde{U}$ we have a feasible element and we can define
	$$ \xi := \beta\sum\limits_{i=1}^M\sum\limits_{j=1}^N \tilde{u}_{ij} +
		\alpha\max\limits_{i\in\{1,\ldots,M\}}\sum\limits_{j=1}^N \tilde{u}_{ij} \; .$$
	Due to the fact that $\widetilde{U}\in \mathcal{S}_{\xi}$
	holds, we see that $\mathcal{S}_{\xi}$ is not empty and since for all
	$U\in \mathcal{S}_{\xi}$ holds that
	\begin{align*}
		\norm{U}_{1,\infty} \leq \frac{\xi}{\alpha} \quad\text{if}\quad \alpha\neq
		0 \qquad\text{or}\qquad
		\norm{U}_{1,1} \leq \frac{\xi}{\beta} \quad\text{if}\quad \beta\neq 0\
	\end{align*}
	and we have $u_{ij}\geq 0$ for all $i\in\{1,\ldots,M\}$ and
	$j\in\{1,\ldots,N\}$, the sublevel set $\mathcal{S}_{\xi}$ is bounded.

	Our functional is finite dimensional, hence $\mathcal{S}_{\xi}$ is bounded in
	all norms. Furthermore, boundedness of $\mathcal{S}_{\xi}$ in combination with
	lower semi-continuity of \eqref{eq:pos_const_prob} and
	\eqref{eq:Uv_constr_prob} yields compactness of $\mathcal{S}_{\xi}$. Finally,
	we obtain the existence of a minimizer of the constrained problems
	\eqref{eq:pos_const_prob} and \eqref{eq:Uv_constr_prob}.
	Note that we choose the minimizing $v$ for problem \eqref{eq:Uv_constr_prob} in
	accordance with Theorem \ref{th:ell1inf_equiv}.
\end{proof}

\begin{thm}[Existence of a Minimizer of the Unconstrained
Problem]
\label{th:existence_unconstr_prob}
~\\
	Let be $\alpha>0$. Then there exists a minimizer of \eqref{eq:pos_unconst_prob}
	and \eqref{eq:Uv_unconstr_prob}.
\end{thm}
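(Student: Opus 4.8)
The plan is to establish existence of a minimizer for the unconstrained problems \eqref{eq:pos_unconst_prob} and \eqref{eq:Uv_unconstr_prob} by the direct method of the calculus of variations, exactly as in the proof of Theorem \ref{th:existence_constr_prob}. First I would define
\begin{align*}
	F(U) := \frac{1}{2}\left\|\gamma\left(AUB^T-W\right)\right\|_F^2 +
	\beta\sum\limits_{i=1}^M\sum\limits_{j=1}^N u_{ij}
\end{align*}
and observe that $F$ is convex, being the sum of a convex quadratic data term (since $\gamma^T\gamma$ is positive definite the Frobenius term is a composition of a norm with an affine map) and a linear penalty. By Proposition \ref{th:lower_semi-continuity} the objective of \eqref{eq:pos_unconst_prob} is then lower semi-continuous, and Theorem \ref{th:ell1inf_equiv} transfers this to the equivalent reformulation \eqref{eq:Uv_unconstr_prob}.

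The remaining task, as before, is coercivity: I would show that some sublevel set
\begin{align*}
	\mathcal{S}_{\xi} = \left\{U\in G \; \left| \;
		\frac{1}{2}\left\|\gamma\left(AUB^T-W\right)\right\|_F^2 +
		\beta\sum\limits_{i,j} u_{ij} +
		\alpha\max\limits_{i}\sum\limits_{j} u_{ij}
		\leq \xi \right.\right\}
\end{align*}
is nonempty and compact. Nonemptiness is immediate since $U=0\in G$ gives a finite objective value, so any $\xi$ above $\tfrac12\|\gamma W\|_F^2$ works. For boundedness I would use the hypothesis $\alpha>0$ together with the nonnegativity constraint $u_{ij}\geq 0$: on $G$ the entries are all nonnegative, so the $\ell^{1,\infty}$-penalty controls the maximal row sum and, combined with nonnegativity, bounds every entry from above by $\xi/\alpha$. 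Hence $\mathcal{S}_{\xi}$ is contained in a box and is bounded. Finiteness of dimension then turns boundedness plus lower semi-continuity into compactness, and the existence of a minimizer follows by the standard argument.

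The main obstacle, and the reason the hypothesis $\alpha>0$ is needed here, is coercivity. In the constrained case the data equation $AUB^T=W$ together with the feasibility assumption already confined $U$, but in the unconstrained problem the quadratic term alone need not be coercive when $A$ or $B$ has a nontrivial kernel, so the quadratic part does not by itself force $\|U\|$ to grow. It is precisely the $\alpha$-weighted $\ell^{1,\infty}$-term acting on the nonnegative orthant that supplies the missing control on all entries; without $\alpha>0$ one could move $U$ to infinity along directions in the kernel of the forward map with $\beta$-cost possibly compensated, so this is the step I would treat most carefully. The role of $\beta$ is secondary and I would not rely on it. Finally, as in the previous proof, the minimizing $v$ for \eqref{eq:Uv_unconstr_prob} is selected via Theorem \ref{th:ell1inf_equiv}.
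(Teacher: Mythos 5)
Your proposal is correct and follows essentially the same route as the paper's own proof: define $F$ as the sum of the quadratic data term and the $\beta$-weighted linear term, invoke Proposition \ref{th:lower_semi-continuity} for lower semi-continuity, show the sublevel set (nonempty via $U=0$, with $\xi=\tfrac12\lVert\gamma W\rVert_F^2$) is bounded through the $\alpha$-weighted $\ell^{1,\infty}$-term on the nonnegative orthant, and conclude compactness and existence by finite-dimensionality, selecting $v$ via Theorem \ref{th:ell1inf_equiv}. Your explicit discussion of why $\alpha>0$ is indispensable for coercivity only makes precise what the paper leaves implicit in its ``analogously to Theorem \ref{th:existence_constr_prob}'' step.
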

\begin{proof}
~\\
	Obviously the functional
	\begin{align*}
		F(U) := \frac{1}{2}\left\|\gamma\left(AUB^T-W\right)\right\|_F^2 +
		\beta\sum\limits_{i=1}^M\sum\limits_{j=1}^N u_{ij}
	\end{align*}
	is convex.
	By using Proposition \ref{th:lower_semi-continuity} we obtain that
	\eqref{eq:pos_unconst_prob} and \eqref{eq:Uv_unconstr_prob} are lower
	semi-continuous.\\
	The sublevel set
	$$	\mathcal{S}_{\xi} = \left\{U\in G \; \left| \;
		\frac{1}{2}\left\|\gamma\left(AUB^T-W\right)\right\|_F^2 +
		\beta\sum\limits_{i=1}^M\sum\limits_{j=1}^N u_{ij} +
		\alpha\max\limits_{i\in\{1,\ldots,M\}}\sum\limits_{j=1}^N u_{ij} \leq \xi
		\right.\right\} $$
	with
	$$ \xi := \frac{1}{2}\left\|\gamma W\right\|_F^2 $$
	is not empty, since we obviously have $0\in \mathcal{S}_{\xi}$.\\
	Analogously to the proof of Theorem \ref{th:existence_constr_prob}, we see that
	$\mathcal{S}_{\xi}$ is bounded.
	Due to the finite dimensionality of the problem, we have compactness of the
	sublevel set $\mathcal{S}_{\xi}$.
 	Together with semi-continuity we obtain existence of a minimizer of the
	unconstrained problems \eqref{eq:pos_unconst_prob} and
	\eqref{eq:Uv_unconstr_prob}.
	Note that we choose again the minimizing $v$ for problem
	\eqref{eq:Uv_constr_prob} in accordance with Theorem \ref{th:ell1inf_equiv}.
\end{proof}

\subsubsection{Uniqueness}

Let us now shortly discuss potential uniqueness of the solutions of
\eqref{eq:Uv_constr_prob} and \eqref{eq:Uv_unconstr_prob}.

\begin{thm}[Restriction of the Solution Set]
\label{th:minimizer_with_minimal_v}
~\\
	There exists a solution $\left(\bar{U},\;\bar{v}\right)$ of
	\eqref{eq:Uv_constr_prob} and \eqref{eq:Uv_unconstr_prob} with $\bar{v}$
	minimal, i.e. $\bar{v}\leq v$ for all minimizers $\left(U,\;v\right)$.
	Furthermore $\bar{v}$ is unique.
\end{thm}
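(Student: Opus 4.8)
The plan is to treat both problems uniformly by regarding the full solution set in $(U,v)$-space and then extracting the pair with smallest $v$ as a minimizer of the continuous functional $(U,v)\mapsto v$ over a compact set. First I would fix whichever of \eqref{eq:Uv_constr_prob} or \eqref{eq:Uv_unconstr_prob} is under consideration, denote its optimal value by $p^*$ (finite and attained by Theorem \ref{th:existence_constr_prob}, respectively Theorem \ref{th:existence_unconstr_prob}), and let $\mathcal{M}$ be the set of all minimizing pairs $(U,v)$. By definition $\mathcal{M}$ is exactly the feasible region intersected with the sublevel set at level $p^*$, i.e. the set of feasible $(U,v)$ whose objective value is $\le p^*$, and it is nonempty by the cited existence theorems.

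Next I would show that $\mathcal{M}$ is compact. Closedness follows because the constraints $U\in G$, $\alpha\sum_{j}u_{ij}\le v$ for all $i$, and $v\ge 0$ (together with $AUB^T=W$ in the constrained case) describe a closed set, while $\{(U,v)\,:\,\text{objective}\le p^*\}$ is closed by the lower semi-continuity established in Proposition \ref{th:lower_semi-continuity}. Boundedness follows as in the existence proofs: on $\mathcal{M}$ the objective equals $p^*$ and all of its summands are nonnegative, so $0\le v\le p^*$ and $\beta\norm{U}_{1,1}\le p^*$; moreover the feasibility constraint forces $\alpha\max_{i}\sum_{j}u_{ij}\le v\le p^*$, which together with $U\ge 0$ bounds every row sum and hence every entry $u_{ij}$ whenever $\alpha\neq 0$. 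Thus, provided $\alpha\neq 0$ or $\beta\neq 0$, the set $\mathcal{M}$ is bounded, and in finite dimensions closed plus bounded gives compact. Here Theorem \ref{th:ell1inf_equiv} is the natural companion fact, since it identifies $v=\alpha\max_{i}\sum_{j}u_{ij}$ at every minimizer, so that minimizing $v$ over $\mathcal{M}$ is the same as minimizing $\norm{U}_{1,\infty}$ over the minimizing $U$'s.

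With $\mathcal{M}$ compact and nonempty, the continuous map $(U,v)\mapsto v$ attains its minimum on $\mathcal{M}$; any minimizing pair $(\bar U,\bar v)$ then satisfies $\bar v\le v$ for every $(U,v)\in\mathcal{M}$, which is precisely the asserted existence of a solution with minimal $v$. For uniqueness of $\bar v$ I would simply observe that if $(\bar U_1,\bar v_1)$ and $(\bar U_2,\bar v_2)$ both enjoy this minimality property, then $\bar v_1\le\bar v_2$ and $\bar v_2\le\bar v_1$, whence $\bar v_1=\bar v_2$; equivalently, the minimum of the set $\{v\,:\,(U,v)\in\mathcal{M}\}\subseteq\mathbb{R}$ is, by definition, a single real number.

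The only genuine work lies in the compactness of $\mathcal{M}$, and within it the boundedness of the $v$-component, which is where the feasibility inequality (reinforced by Theorem \ref{th:ell1inf_equiv}) is essential: without the bound $\alpha\max_{i}\sum_{j}u_{ij}\le v\le p^*$ one could not a priori exclude minimizers with arbitrarily large slack $v$. I expect this step, reducing everything to the boundedness estimates already carried out in the existence proofs, to be the main (though mild) obstacle; once it is in place, attainment of the minimal $v$ and its uniqueness are immediate from standard compactness and the definition of a minimum.
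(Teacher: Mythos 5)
Your proposal is correct and uses essentially the same mechanism as the paper's proof: boundedness of the minimizing $U$'s via the sublevel-set estimates, lower semi-continuity, and finite-dimensional compactness to show the infimum of $v$ over the solution set is attained, with uniqueness of $\bar{v}$ being immediate. The paper packages this as a minimizing-sequence/contradiction argument while you phrase it as Weierstrass on the compact solution set $\mathcal{M}$, but these are the same proof in different form.
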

\begin{proof}
~\\
	Obviously $\bar{v}$ can be defined as
	\begin{align*}
		\bar{v} := \inf\left\{v\;|\;(U,\;v)\text{ is a minimizer of }
		\eqref{eq:allg_U_v_form}\right\}
	\end{align*}
	with $F$ as in \eqref{eq:Uv_constr_prob}, \eqref{eq:Uv_unconstr_prob}
	respectively.
	Due to Theorem \ref{th:existence_constr_prob} and
	\ref{th:existence_unconstr_prob}, we know that $\bar{v}<\infty$ has to hold.
	We proof the assumption via contradiction.\\
	Assume there does not exist a $\bar{U}$ with
	$(\bar{U},\;\bar{v})$ being a minimizer of \eqref{eq:Uv_constr_prob}, \eqref{eq:Uv_unconstr_prob}
	respectively.
	We can find a sequence of minimizers $(U_k,\;v_k)$ with
	$v_k\rightarrow\bar{v}$.
	$U_k$ is bounded, since $U_k\in \mathcal{S}_{\xi}$ holds for all $k$.
	Thus there exists a converging subsequence $(U_{k_l},\;v_{k_l})$.
	Finally, lower semi-continuity provides us with the limit $(\bar{U},\;\bar{v})$
	being a minimizer, which is a contradiction to the assumption.
	Furthermore, since we have $\bar{v}\in\R{+}$, it is obviously unique.
\end{proof}

In Theorem \ref{th:minimizer_with_minimal_v} we have seen that we can reduce
the solution set to those solutions with optimal $v$, i.e. 
\begin{align*}
	\mathcal{S} := \left\{(\bar{U},\;\bar{v}) \in G \times\R{+} \text{ is a minimizer of }
			\eqref{eq:allg_U_v_form} \;|\; \bar{v}
	\;\text{minimal} \right\} \; ,
\end{align*} 
with $F$ as in \eqref{eq:Uv_constr_prob}, \eqref{eq:Uv_unconstr_prob}
respectively.
There always exists a unique $\bar{v}\in\R{+}$, however, in
general we are not able to deduce uniqueness for $(U,\;\bar{v}) \in G
\times\R{+}$.

\subsection{The Subdifferential of the
\texorpdfstring{$\ell^{1,\infty}$}{l1inf}-Norm for Matrices}
\label{subdifferentials_source_condition}

In this subsection we characterize the subdifferentials of the
$\ell^{1,\infty}$-norm and its nonnegative counterpart.
Furthermore, we discuss what kind of solutions $\widehat{U}$ to
$A\widehat{U}B^T=W$ are likely to meet a source condition for the
$\ell^{1,\infty}$-regularization.

\subsubsection{The Subdifferential of
\texorpdfstring{$\ell^{1,\infty}$}{ell1inf}}

We start by computing the subdifferential of the $\ell^{1,\infty}$-norm.
Note that while in general the subdifferential of a convex
function $J$ depending on a matrix $U\in\R{M\times N}$ is defined as
\begin{align*}
	\partial J(U) = \left\{P \in\R{M\times N} ~\big|~ J(V) - J(U) - \langle P, V-U
	\rangle_F \geq 0, \ \forall\, V \in\R{M\times N} \right\}
\end{align*}
with $\langle A, B \rangle_F = \sum_{i,j} A_{i,j}B_{i,j}$, one readily shows
that the subdifferential of an absolutely 1-homogeneous convex functional may
also be characterized as
\begin{align}
	\label{eq:charac1homSubdiff}
	\partial J(U) = \left\{P \in\R{M\times N} ~\big|~ J(U) = \langle P, U \rangle_F
	, \ J(V) \geq \langle P, V \rangle_F , \ \forall\, V \in\R{M\times N} \right\}.
\end{align}
\begin{thm}[Subdifferential of the $\ell^{1,\infty}$-Functional]
\label{thm:subdiff}
~\\
	Let be $U,\; P\in\R{M\times N}$. The subdifferential of 
	$\|U\|_{1,\infty}$ can be characterized as follows:\\
	Let $I$ be the set of indices, where $U$ attains its maximum row-$\ell^1$-norm,
	i.e.
	\begin{align*}
		I = \left\{ i \in \{1,\ldots, M \} ~\left|~ \sum_{j=1}^N |u_{ij}| = \max_{m\in
		\{1,\ldots, M \}} \sum_{j=1}^N |u_{mj}| \right.\right\}.
	\end{align*}
	Then the following equivalence holds:
	\begin{align}
		P \in \partial \|U\|_{1,\infty} \quad\Leftrightarrow\quad 
		\begin{cases}
			p_{ij} = \omega_{i}\,\text{sign}(u_{ij}) & \text{if } i \in I,\\
			p_{ij} = 0  & \text{if } i \notin I,
		\end{cases}
	\label{eq:charac}
	\end{align} 
	with weights $\omega_i \geq 0$ such that $\sum\limits_{i\in I} \omega_i = 1$ holds if
	$U \not\equiv 0$ and $\sum\limits_{i\in I} \omega_i \leq 1$ holds
	if $U \equiv 0$.
	By convention we use $\text{sign}(0)$ to denote an arbitrary element in
	$[-1,1]$.
\end{thm}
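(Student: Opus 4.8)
The plan is to prove the equivalence in \eqref{eq:charac} by exploiting the characterization \eqref{eq:charac1homSubdiff} of the subdifferential for absolutely $1$-homogeneous convex functionals, which applies since $\|\cdot\|_{1,\infty}$ is a norm and hence positively $1$-homogeneous. According to that characterization, $P\in\partial\|U\|_{1,\infty}$ if and only if the two conditions $\langle P,U\rangle_F = \|U\|_{1,\infty}$ and $\langle P,V\rangle_F \leq \|V\|_{1,\infty}$ for all $V$ hold simultaneously. The second condition is exactly the statement that $P$ lies in the unit ball of the dual norm, while the first pins down which dual-ball elements actually touch $U$. So the whole proof reduces to (i) identifying the dual norm of $\|\cdot\|_{1,\infty}$, and (ii) determining the exposed face of its unit ball at $U$.

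First I would establish that the norm dual to $\ell^{1,\infty}$ is the $\ell^{\infty,1}$-norm, i.e. $\|P\|_{(1,\infty)}^* = \sum_{i=1}^M \max_{j}|p_{ij}|$. This follows from H\"older's inequality applied twice (in $j$ with the pairing $\ell^1$--$\ell^\infty$, then in $i$ with $\ell^\infty$--$\ell^1$): one shows $\langle P,V\rangle_F \leq \sum_i (\max_j |p_{ij}|)(\sum_j |v_{ij}|) \leq (\sum_i \max_j|p_{ij}|)\,\|V\|_{1,\infty}$, and the bound is attained, so the inequality $\langle P,V\rangle_F \leq \|V\|_{1,\infty}$ for all $V$ is equivalent to $\sum_i \max_j |p_{ij}| \leq 1$. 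I would then translate the claimed form of $P$ in \eqref{eq:charac} into this dual-ball membership: if $p_{ij}=0$ for $i\notin I$ and $p_{ij}=\omega_i\,\mathrm{sign}(u_{ij})$ for $i\in I$, then $\max_j|p_{ij}|=\omega_i$ (using $|\mathrm{sign}(u_{ij})|\leq 1$), so $\sum_i\max_j|p_{ij}|=\sum_{i\in I}\omega_i$, and the constraint $\sum_{i\in I}\omega_i\leq 1$ is precisely dual-ball membership.

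Next I would verify the tightness condition $\langle P,U\rangle_F=\|U\|_{1,\infty}$. Writing $M^*=\max_m\sum_j|u_{mj}|=\|U\|_{1,\infty}$, I compute $\langle P,U\rangle_F = \sum_{i\in I}\omega_i\sum_j \mathrm{sign}(u_{ij})u_{ij} = \sum_{i\in I}\omega_i\sum_j|u_{ij}| = M^*\sum_{i\in I}\omega_i$, where I used that for $i\in I$ the inner sum equals $M^*$ by definition of $I$. Hence equality $\langle P,U\rangle_F=M^*$ holds exactly when $\sum_{i\in I}\omega_i=1$, explaining the normalization for $U\not\equiv 0$; when $U\equiv 0$ both sides are zero regardless, so only the dual-ball constraint $\sum_{i\in I}\omega_i\leq 1$ remains, matching the stated convention. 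This establishes the "$\Leftarrow$" direction.

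For the "$\Rightarrow$" direction, which I expect to be \emph{the main obstacle}, I would start from an arbitrary $P$ satisfying both subdifferential conditions and deduce the structural form. The dual-ball condition forces $\sum_i\max_j|p_{ij}|\leq 1$. The tightness condition, combined with the Cauchy--Schwarz-style chain of inequalities used to prove the dual-norm bound, must be an equality throughout, and I would trace back each inequality to its equality case. Concretely, equality in $\langle P,U\rangle_F\leq\sum_i(\max_j|p_{ij}|)(\sum_j|u_{ij}|)$ forces $p_{ij}$ to align in sign with $u_{ij}$ and to have constant modulus $\max_j|p_{ij}|=:\omega_i$ across the support of row $i$; equality in the outer $\ell^\infty$--$\ell^1$ step forces all the "mass" $\omega_i$ to sit on rows where $\sum_j|u_{ij}|$ attains the maximum $M^*$, i.e. forces $\omega_i=0$ for $i\notin I$. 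The delicate points are handling rows $i\in I$ where some $u_{ij}=0$ (there $\mathrm{sign}(u_{ij})$ is free in $[-1,1]$, which is exactly why the convention on $\mathrm{sign}(0)$ is invoked) and the degenerate case $U\equiv 0$. Setting $\omega_i:=\max_j|p_{ij}|$ and reading off the sign pattern then yields precisely \eqref{eq:charac}, completing the equivalence.
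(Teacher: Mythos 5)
Your proposal is correct, but its forward direction takes a genuinely different route from the paper's. For the ``$\Leftarrow$'' implication the two arguments are essentially identical: one verifies the two conditions of \eqref{eq:charac1homSubdiff} by direct computation, exactly as you do. For the ``$\Rightarrow$'' implication, however, the paper observes that $\|U\|_{1,\infty}$ is the operator norm induced by the $\ell^\infty$ vector norm, $\|U\|_{1,\infty} = \max_{\|v\|_\infty\leq 1}\|Uv\|_\infty$, and invokes Theorem 4 of Watson \cite{Watson1992}, which writes $\partial\|U\|_{1,\infty}$ as the convex hull of rank-one matrices $wv^T$ with $\|v\|_\infty=1$, $Uv=\|U\|_{1,\infty}z$, $\|z\|_\infty=1$, $w\in\partial\|z\|_\infty$; it then checks that each such $wv^T$ has the claimed form and that the claimed set is convex, so that the convex hull stays inside it. You instead run an elementary equality-case analysis in the two-step H\"older chain $\langle P,U\rangle_F \leq \sum_i\bigl(\max_j|p_{ij}|\bigr)\bigl(\sum_j|u_{ij}|\bigr) \leq \bigl(\sum_i\max_j|p_{ij}|\bigr)\|U\|_{1,\infty} \leq \|U\|_{1,\infty}$: tightness forces sign alignment of $p_{ij}$ with $u_{ij}$, constant modulus $\omega_i:=\max_j|p_{ij}|$ on the support of each row, $\omega_i=0$ for $i\notin I$, and $\sum_{i\in I}\omega_i=1$ when $U\not\equiv 0$, while entries over zeros of $U$ are only constrained to $[-\omega_i,\omega_i]$, which is precisely the $\mathrm{sign}(0)$ convention. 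This plan is sound and, once the equality cases are written out, complete. What your route buys is self-containedness (no external theorem), a transparent treatment of the degenerate case $U\equiv 0$ (where the characterization collapses to the dual-ball condition $\sum_i\max_j|p_{ij}|\leq 1$), and the dual-norm identity between $\ell^{1,\infty}$ and $\ell^{\infty,1}$ as an explicit byproduct, a fact the paper only records as a remark after the theorem. What the paper's route buys is the link to the general theory of subdifferentials of operator norms, at the cost of the convex-hull bookkeeping and a slightly more delicate handling of the zero matrix.
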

\begin{proof}
~\\
	First, assume that a given $P$ meets the conditions on the right hand side of
	\eqref{eq:charac}. We have
	\begin{align*}
		\sum\limits_{i=1}^M\sum\limits_{j=1}^N p_{ij}u_{ij} &= \sum_{i\in I}
		\sum_{j=1}^N \omega_i\; \text{sign}(u_{ij})u_{ij} = \sum_{i\in I} \omega_i
		\sum_{j=1}^N |u_{ij}| = \|U\|_{1,\infty} \sum_{i\in I} \omega_i = 
		\|U\|_{1,\infty}
	\end{align*} 
	and
	\begin{align*}
		\sum\limits_{i=1}^M\sum\limits_{j=1}^N p_{ij}v_{ij} & \leq  \left|
		\sum\limits_{i=1}^M\sum\limits_{j=1}^N p_{ij}v_{ij} \right| \leq \sum_{i \in
		I}\sum_{j=1}^N \left|p_{ij}v_{ij}\right| \leq \sum_{i \in I} \omega_i
		\sum_{j=1}^N |v_{ij}| \\
		& \leq \sum_{i \in I} \omega_i \max_m\sum_{j=1}^N |v_{mj}| \leq
		\max_m\sum_{j=1}^N |v_{mj}| = \|V\|_{1,\infty} \; .
	\end{align*}
	By the characterization of the subdifferential \eqref{eq:charac1homSubdiff} we
	obtain $P \in \partial \norm{U}_{1,\infty}$.

	Now let $P \in \partial \norm{U}_{1,\infty}$ be given. Note that the considered
	$\ell^{1,\infty}$ matrix norm, is also the operator norm induced by the
	$\ell^\infty$ vector norm, i.e.
	$$ \|U\|_{1,\infty} = \max_{\|v\|_\infty \leq 1} \|Uv\|_{\infty}. $$
	The latter allows us to apply Theorem 4 of Watson in \cite{Watson1992} and
	conclude that
	\begin{align*}
		\partial \|U\|_{1,\infty} &= \text{conv}\left\{wv^T~\big|~\|v\|_\infty = 1,  \
		Uv=\|U\|_{1,\infty} z,\  \|z\|_\infty = 1, \ w \in \partial \|z\|_\infty
		\right\}
	\end{align*}
	holds, where $\text{conv}\{\cdot\}$ shall denote the convex hull.
	We will first show that every $P = wv^T$ from the above set can be written as
	the claimed right hand side in \eqref{eq:charac} and conclude by noting that
	the right hand side in \eqref{eq:charac} corresponds to a convex set. For $P =
	wv^T$ the above conditions imply that there is at least one $i \in I$ such that
	$v_j = \text{sign}(u_{ij})$ or $v_j = -\text{sign}(u_{ij})$ is true. For
	every $i$ where the above holds, we have either $z_i = 1$ or $z_i = -1$ (unless
	we have $U\equiv 0$, which we will consider later).
	Now $w \in \partial \|z\|_\infty$ means that we have $w_i = 0$ in case that
	$|z_i|<1$ holds and $w_i = z_i \omega_i$ with $\omega_i \geq 0$ and $\sum_{i}
	\omega_i = 1$, else. 
	Noting that the ambiguity in the sign of $v_j$ cancels after the multiplication
	with $w_i = z_i \omega_i$ yields the equivalence to our characterization in the
	case of $U \not \equiv 0$. Otherwise $v$ and $u$ are arbitrary, which
	particularly means that $w$ is any element in $\partial \|0\|_\infty$, i.e.
	$\|w\|_1\leq 1$. Thus, the subdifferential becomes the set of all $wv^T$ such
	that $\|w\|_1\leq 1$ and $\|v\|_\infty\leq 1$ hold, which yields our second
	assertion since we have $I =  \{1,\ldots, M \}$.
	
	Finally, note that for $P_1$ and $P_2$ both meeting the right hand side of our
	claimed characterization of the subdifferential (i.e. meeting Watson's
	conditions without  $\text{conv}\{\cdot\}$) we find that $\alpha P_1 + (1-\alpha P_2)$,
	$\alpha \in [0,1]$, again meets the conditions of our right hand side, such
	that we can conclude the convexity of the set.
\end{proof}
One particular thing we can see from Theorem \ref{thm:subdiff} is
that $P \in \partial \|U\|_{1, \infty}$ for an arbitrary $U$ meets 
$$\norm{P}_{\infty,1} = \sum_{i=1}^M\max_{j\in\{1,\ldots,N\}} |p_{ij}| \leq 1$$
and $$\norm{P}_{\infty,1} = \sum_{i=1}^M \max_{j\in\{1,\ldots,N\}} |p_{ij}| = 1
\qquad\text{for}\qquad U\not\equiv 0 \; .$$ 
Thus we see that the $\ell^{\infty,1}$-norm is the dual to the
$\ell^{1,\infty}$-norm, which has already been observed by Tropp in
\cite{TroppRelax2006}.

\subsubsection{The Subdifferential of the Nonnegative
\texorpdfstring{$\ell^{1,\infty}$}{ell1inf}-Formulation}

Let us now consider the nonnegative $\ell^{1,\infty}$-functional
\eqref{eq:nonneg_func}.
\begin{thm}[Subdifferential of the Nonnegative $\ell^{1,\infty}$-Functional]
~\\
	Let $P^{1,\infty}\in\R{M\times N}$ be the subdifferential of the
	$\ell^{1,\infty}$-norm characterized as before in \eqref{eq:charac} and let
	$p_{ij}^{1,\infty}$ be its entries for $i\in\{1,\ldots,M\}$ and
	$j\in\{1,\ldots,N\}$.
	Then the subdifferential of the nonnegative $\ell^{1,\infty}$-functional
	\begin{align*}
		\mathcal{R}\!\left(U\right) := 
		\begin{cases}
			\max\limits_{i\in\{1,\ldots,M\}}\sum\limits_{j=1}^N u_{ij} & \text{if } u_{ij}\geq 0, \\
			\infty & \text{else,}
		\end{cases}
	\end{align*}
	can be characterized as
	\begin{align*}
		P\in\partial \mathcal{R}\!\left(U\right) \quad\Leftrightarrow\quad p_{ij} =
		p_{ij}^{1,\infty} + \mu_{ij} \; ,
	\end{align*}
	where $\mu_{ij}$ for all $i\in\{1,\ldots,M\},\;j\in\{1,\ldots,N\}$ are the
	Lagrange parameters with
	\begin{align*}
		\mu_{ij}
		\begin{cases}
			= 0 & \text{if } u_{ij}\neq 0, \\
			\leq 0 & \text{if } u_{ij} = 0.
		\end{cases}
	\end{align*}
\end{thm}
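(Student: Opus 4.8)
The plan is to recognize $\mathcal{R}$ as a sum of two convex functions and to invoke additivity of the subdifferential. Since every $U \in G$ has nonnegative entries, $\sum_{j} u_{ij} = \sum_j |u_{ij}|$, and therefore
$$\mathcal{R}(U) = \|U\|_{1,\infty} + \chi_G(U),$$
where $\chi_G$ is the convex-analytic characteristic function of $G$ (zero on $G$, $+\infty$ otherwise). The first summand is a norm, hence real-valued and continuous on all of $\R{M\times N}$, while the second has nonempty domain $G$. This is exactly the setting in which the Moreau--Rockafellar sum rule applies without any relative-interior hypothesis, so that
$$\partial \mathcal{R}(U) = \partial \|U\|_{1,\infty} + \partial \chi_G(U).$$
The first term on the right is precisely the set of matrices $P^{1,\infty}$ described by \eqref{eq:charac} in Theorem \ref{thm:subdiff}; it then remains only to identify $\partial \chi_G(U)$.

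Next I would compute $\partial \chi_G(U) = N_G(U)$, the normal cone to $G$ at $U$, noting that the assertion is meaningful only for $U \in G$ (otherwise $\mathcal{R}(U) = \infty$ and the subdifferential is empty). By definition $\mu \in N_G(U)$ means $\langle \mu, V - U\rangle_F \leq 0$ for all $V \in G$. Because $G$ is the nonnegative orthant, a product over the entries, this inequality decouples componentwise, and testing with $V = U + t E_{ij}$ (where $E_{ij}$ is the standard basis matrix and $t$ is chosen so that $V \in G$) reduces it to $\mu_{ij}(v_{ij} - u_{ij}) \leq 0$ for all feasible $v_{ij} \geq 0$. If $u_{ij} > 0$ (equivalently $u_{ij}\neq 0$ for $U\in G$), then $v_{ij} - u_{ij}$ takes both signs near $u_{ij}$, forcing $\mu_{ij} = 0$; if $u_{ij} = 0$, feasibility gives $v_{ij} - u_{ij} = v_{ij} \geq 0$, so the inequality holds for all admissible $v_{ij}$ exactly when $\mu_{ij} \leq 0$. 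This reproduces the claimed sign pattern of the Lagrange parameters (equivalently, complementary slackness $\mu_{ij} u_{ij} = 0$ together with $\mu_{ij} \leq 0$), and substituting into the sum rule yields $p_{ij} = p_{ij}^{1,\infty} + \mu_{ij}$.

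The one step deserving care is the additivity of the subdifferential, which I expect to be the crux rather than the normal-cone computation. Additivity can fail for sums of extended-valued convex functions, but here it is guaranteed by the finiteness and continuity of $\|\cdot\|_{1,\infty}$ everywhere on $\R{M\times N}$, so that no qualification on $G$ beyond nonemptiness is needed. The remaining componentwise analysis of the orthant normal cone is then entirely routine.
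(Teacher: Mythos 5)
Your proposal is correct and follows essentially the same route as the paper: the decomposition $\mathcal{R} = \|\cdot\|_{1,\infty} + \chi_G$, additivity of the subdifferential justified by a continuity-type constraint qualification (the paper cites Ekeland--Temam, you cite Moreau--Rockafellar; these are the same condition, satisfied because the norm is finite and continuous everywhere), and identification of the second summand with the sign-constrained Lagrange multipliers. If anything, your argument is slightly more complete than the paper's, since you carry out the componentwise normal-cone computation $\partial\chi_G(U) = N_G(U)$ explicitly, whereas the paper asserts that step without proof.
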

\begin{proof}
~\\
	$\mathcal{R}\!\left(U\right)$ can be written using the characteristic function,
	i.e.
	\begin{align*}
		\mathcal{R}\!\left(U\right) = \norm{U}_{1,\infty} + \chi_{\{U\in\R{M\times
		N}\,|\,u_{ij}\,\geq\, 0\;\;\forall
		\;i\in\{1,\ldots,M\},\;j\in\{1,\ldots,N\}\}} \; .
	\end{align*}
	In case that the subdifferential is additive, we have
	\begin{align*}
		\partial \mathcal{R}\!\left(U\right) = \partial\norm{U}_{1,\infty} + \partial\chi_{\{U\in\R{M\times
		N}\,|\,u_{ij}\,\geq\,0\;\;\forall
		\;i\in\{1,\ldots,M\},\;j\in\{1,\ldots,N\}\}}
	\end{align*}
	and directly obtain
	\begin{align*}
		P\in\partial \mathcal{R}\!\left(U\right) \quad\Leftrightarrow\quad p_{ij} = p_{ij}^{1,\infty} +
		\mu_{ij} \; .
	\end{align*}
	In order to prove that in this case the subdifferential is additive, we have to
	show that the following two conditions hold (cf. \cite[Chapter 1,
	Proposition 5.6]{Ekeland1999}):
	\begin{enumerate}
	 	\item $\norm{U}_{1,\infty}$ and $\chi_{\{U\in\R{M\times N}\,|\,u_{ij}\geq
	 	0\;\;\forall \;i,\;j\}}$ are proper, convex and lower semi-continuous,
	 	\item  there exists a
	 	$\bar{U}\in\text{dom}\norm{U}_{1,\infty}\cap\text{dom}\chi_{\{U\in\R{M\times
	 	N}\,|\,u_{ij}\geq 0\;\;\forall \;i,\;j\}}$, where one of the two functionals
	 	is continuous.
	\end{enumerate}
	This is quite easy to see:
	\begin{enumerate}
	 	\item Since $\norm{U}_{1,\infty}$ is a norm, it is naturally proper and convex. Furthermore, we see that
	 	$\chi_{\{U\in\R{M\times N}\,|\,u_{ij}\geq 0\;\;\forall
	 	\;i\in\{1,\ldots,M\},\;j\in\{1,\ldots,N\}\}}$ is obviously
	 	proper. It is also convex, since the characteristic
	 	function of a convex set is also convex.
		Both functionals are lower semi-continuous, since we are in a finite
		dimensional setting.
		\item Let $\bar{u}_{ij}>0$ hold for all $i\in\{1,\ldots,M\}$ and
		$j\in\{1,\ldots,N\}$.
		Then we have $$
		\bar{U}\in\text{dom}\norm{U}_{1,\infty}\cap\text{dom}\chi_{\{U\in\R{M\times
		N}\,|\,u_{ij}\geq 0\;\;\forall \;i,\;j\}} \; .$$ Furthermore, both functionals
		are continuous at $\bar{U}$.
	\end{enumerate}
	Thus we see that the subdifferential is additive and we obtain the assumption.
\end{proof}
\begin{rem}
\label{rem:subdiff_nonneg_ell1inf}
~\\
	Clearly $P\in\partial \mathcal{R}\!\left(U\right)$ can
	be characterized as follows:
	\begin{align*}
		p_{ij}
		\begin{cases}
			= 0 & \text{if } i\notin I \text{ and } u_{ij}\neq0,\\
			\leq 0 & \text{if } i\notin I \text{ and } u_{ij}=0,\\
			= \omega_i & \text{if } i\in I \text{ and } u_{ij}>0, \\
			\in[-\omega_i,\omega_i] & \text{if } i\in I \text{ and }
			u_{ij}=0,\\
			= -\omega_i & \text{if } i\in I \text{ and } u_{ij}<0.
		\end{cases}
	\end{align*}
\end{rem}

\subsubsection{Source Conditions}

Knowing the characterization of the subgradient, we can state a condition, which
allows us to determine whether a certain solution to $AUB^T = W$ is
$\ell^{1,\infty}$-minimizing. We will call this condition a \emph{source
condition} as used in the inverse problem and error estimate literature e.g. in
\cite{Engl1996,Burger2004,Schuster2012}.
However, we would like to point out that similar conditions have been
called \emph{dual certificate} in the compressed sensing literature (c.f.
\cite{Zhang2012, Candes2010, Candes2011, Hsu2011}).
\begin{defi}
~\\
	We say that a solution $\widehat{U}$ of $A\widehat{U}B^T=W$ meets a source
	condition with respect to a proper, convex regularization functional $J$ if there exists
	a $Q$ such that $P = A^TQB \in \partial J(\widehat{U})$.
\end{defi}

The source condition of some $\widehat{U}$ with respect to $J$ is nothing but
the optimality condition for $\widehat{U}$ being a $J$-minimizing solution to
$A\widehat{U}B^T=W$.
\begin{lem}[cf. \cite{Burger2004}]
~\\
	Let $\widehat{U}$ with $A\widehat{U}B^T=W$ meet a source condition with respect
	to $J$.
	Then $\widehat{U}$ is a $J$-minimizing solution.
\end{lem}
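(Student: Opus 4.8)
The plan is to exploit the subgradient inequality together with the special structure of the source-condition element $P = A^T Q B$. First I would take an arbitrary competitor $U$ satisfying the data constraint $AUB^T = W$ and apply the defining inequality of the subdifferential: since $P \in \partial J(\widehat{U})$, the general characterization recalled before \eqref{eq:charac1homSubdiff} yields
\begin{align*}
	J(U) - J(\widehat{U}) - \langle P, U - \widehat{U} \rangle_F \geq 0 \; .
\end{align*}
It therefore suffices to show that the linear term $\langle P, U - \widehat{U}\rangle_F$ vanishes for every feasible $U$; the minimality $J(U) \geq J(\widehat{U})$ then follows immediately.

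The key step -- and the only place where the source condition actually enters -- is to rewrite this inner product by moving the operators $A$ and $B$ onto the second argument. Using $P = A^T Q B$ together with the adjoint/trace identity $\langle A^T Q B, V\rangle_F = \langle Q, A V B^T\rangle_F$, which is just cyclicity of the trace, I would set $V = U - \widehat{U}$ and compute
\begin{align*}
	\langle P, U - \widehat{U}\rangle_F = \langle Q, A(U-\widehat{U})B^T\rangle_F = \langle Q, AUB^T - A\widehat{U}B^T\rangle_F = \langle Q, W - W\rangle_F = 0 \; ,
\end{align*}
where the last equality uses that both $U$ and $\widehat{U}$ satisfy the constraint $AUB^T = W$. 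Substituting this back into the subgradient inequality gives $J(U) \geq J(\widehat{U})$ for every feasible $U$, which is exactly the statement that $\widehat{U}$ is a $J$-minimizing solution.

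The argument is short, and the only obstacle worth flagging is dimensional bookkeeping: one must check that the transpose placement in $P = A^T Q B$ is consistent with the sizes $A \in \R{L\times M}$ and $B \in \R{T\times N}$, so that $Q \in \R{L\times T}$ lives in the data space and $P \in \R{M\times N}$ matches $U$, and that the adjoint identity is applied on the correct sides. Since $J$ is assumed proper and convex, no further regularity is required -- the subgradient inequality holds globally, so the conclusion is valid over the entire feasible set and not merely locally.
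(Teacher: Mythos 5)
Your proof is correct and is precisely the standard argument that the paper delegates to the citation \cite{Burger2004} (the paper itself gives no proof, only the remark that the source condition is the optimality condition for the constrained problem): the subgradient inequality at $\widehat{U}$ combined with the adjoint identity $\langle A^TQB, U-\widehat{U}\rangle_F = \langle Q, A(U-\widehat{U})B^T\rangle_F = 0$ for every feasible $U$. Your dimensional bookkeeping ($Q\in\R{L\times T}$, $P = A^TQB \in\R{M\times N}$) is also consistent with the paper's setup, so nothing is missing.
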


Considering this, the next question naturally emerges for our characterization
of the subdifferential, i.e what kind of solutions $\widehat{U}$ to $A\widehat{U}B^T=W$
are likely to meet a source condition for $\ell^{1,\infty}$-regularization.
Particularly, we are interested in investigating how likely
$\ell^{0,\infty}$-minimizing solutions are to meet a source condition.\\
Due to the similarity between the subgradient of the $\ell^1$-norm and the
subgradient of $\ell^{1,\infty}$-norm at rows with index $i \in I$, we can make
the following simple observation:
\begin{lem}
~\\
	Let $\widehat{U}$ be an $\ell^1$-minimizing solution to $A\widehat{U}B^T=W$ for
	which we have $\sum_{j=1}^N |\hat{u}_{ij}| = \sum_{j=1}^N
	|\hat{u}_{mj}|$ for all $i,m\in\{1,\ldots,M\}$, then $\widehat{U}$ also is an
	$\ell^{1,\infty}$-minimizing solution.
\end{lem}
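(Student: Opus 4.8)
The plan is to argue via the source condition, exploiting that the hypothesis forces \emph{every} row to be active in the $\ell^{1,\infty}$-subdifferential. First I would record the optimality characterization: since the constraint $A\widehat{U}B^T=W$ describes a nonempty affine subspace and the norms are finite everywhere, $\widehat{U}$ being a $J$-minimizing solution is equivalent to meeting a source condition with respect to $J$. Indeed, writing the constrained problem as $\min_U J(U)+\chi_{\{AUB^T=W\}}(U)$, the finite-dimensional sum rule gives the optimality condition $0\in\partial J(\widehat{U})+N$, where the normal cone $N$ to the affine constraint set is exactly the range of the adjoint $Q\mapsto A^TQB$ of the map $D\mapsto ADB^T$; hence $J$-optimality is precisely the existence of a $Q$ with $A^TQB\in\partial J(\widehat{U})$. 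Applying this to $J=\|\cdot\|_{1,1}$, the assumption that $\widehat{U}$ is $\ell^1$-minimizing yields a matrix $Q$ with $P:=A^TQB\in\partial\|\widehat{U}\|_{1,1}$, i.e.\ $p_{ij}=\mathrm{sign}(\hat{u}_{ij})$, with the convention $\mathrm{sign}(0)\in[-1,1]$.

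Next I would translate this $\ell^1$-certificate into an $\ell^{1,\infty}$-certificate. The hypothesis $\sum_{j=1}^N|\hat{u}_{ij}|=\sum_{j=1}^N|\hat{u}_{mj}|$ for all $i,m$ means precisely that the index set $I$ of Theorem~\ref{thm:subdiff}, on which the maximal row-$\ell^1$-norm is attained, is all of $\{1,\ldots,M\}$. I would then set $\omega_i:=1/M$ for every $i$, so that $\sum_{i\in I}\omega_i=1$, and define $\tilde{P}:=\tfrac{1}{M}P$. Entrywise $\tilde{p}_{ij}=\tfrac{1}{M}\mathrm{sign}(\hat{u}_{ij})=\omega_i\,\mathrm{sign}(\hat{u}_{ij})$, which is exactly the form demanded by the characterization \eqref{eq:charac}: at nonzero entries this is determinate, while at zero entries $p_{ij}\in[-1,1]$ rescales to $\tilde{p}_{ij}\in[-\omega_i,\omega_i]$, consistent with $p_{ij}=\omega_i\,\mathrm{sign}(0)$. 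Hence $\tilde{P}\in\partial\|\widehat{U}\|_{1,\infty}$.

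Finally I would observe that $\tilde{P}=\tfrac{1}{M}A^TQB=A^T\bigl(\tfrac{1}{M}Q\bigr)B$, so with $\tilde{Q}:=\tfrac{1}{M}Q$ we have $A^T\tilde{Q}B=\tilde{P}\in\partial\|\widehat{U}\|_{1,\infty}$; that is, $\widehat{U}$ meets a source condition for $\ell^{1,\infty}$-regularization. The source-condition lemma (cf.\ \cite{Burger2004}) then gives that $\widehat{U}$ is an $\ell^{1,\infty}$-minimizing solution, as claimed. The degenerate case $\widehat{U}\equiv0$ (which forces $W=0$) is immediate, since $0$ minimizes every norm.

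I do not expect a genuine obstacle here; the one point needing care is the ``only if'' direction used in the first step---that $\ell^1$-optimality actually \emph{produces} a dual element of the special form $A^TQB$ rather than merely some subgradient. This relies on identifying the normal cone of the affine constraint set with the range of $Q\mapsto A^TQB$, which in the stated finite-dimensional linear setting holds without any additional constraint qualification because the $\ell^1$-norm is finite and continuous on all of $\R{M\times N}$.
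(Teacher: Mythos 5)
Your proposal is correct and follows exactly the paper's own argument, which is stated there in a single sentence: ``the $\ell^1$-subgradient divided by the number of rows is an $\ell^{1,\infty}$-subgradient.'' You have simply filled in the details the paper leaves implicit --- the identification of $\ell^1$-optimality with a dual certificate of the form $A^TQB$ via the normal cone of the affine constraint set, and the verification through the characterization \eqref{eq:charac} with $\omega_i = 1/M$ --- so there is nothing to add.
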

\begin{proof}
~\\
	The $\ell^1$-subgradient divided by the number of rows is an
	$\ell^{1,\infty}$-subgradient.
\end{proof}
The above lemma particularly shows that exact recovery criteria for the
properties of the sensing matrix $\text{kron}(B,A)$ (like the Restricted
Isometry Property \cite{Candes2005}, the Null Space Property
\cite{Donoho2001, Cohen2009} or the Mutual Incoherence
Property \cite{Donoho2001}), are sufficient for the exact recovery of
sparse solutions with the same $\ell^1$-norm in each row.

Of course, we do expect to recover more $\ell^{0,\infty}$-minimizing solutions
than just the ones with the same row-$\ell^1$-norm.
Looking at the characterization of the subdifferential (cf. Remark
\ref{rem:subdiff_nonneg_ell1inf}), we can observe that there are two cases that
pose much more severe restrictions, i.e. the equality constraints, than the two
other cases (which only lead to inequality constraints).
Thus we generally expect solutions, which require only a few of the equality
constraints to be more likely to meet a source condition.
As we can see equality constraints need to be met for nonzero elements, such
that the $\ell^{1,\infty}$-regularization prefers sparse solutions.
Additionally, the constraints are less restrictive if the corresponding row has
maximal $\ell^1$-norm.
Thus we expect those solutions to be likely to meet a source condition that
reach a maximal $\ell^1$-norm in as many rows as possible while being sparse.
Naturally, these solutions will be row-sparse, which further justifies the idea
that the $\ell^{1,\infty}$-norm can be used as a convex approximation of the
$\ell^{0,\infty}$-problem.

\subsection{Equivalence of Formulations}
\label{equivalence_of_formulations}

In this subsection we will show that the minimizers of \eqref{eq:allg_max_form}
and \eqref{eq:easy_problem} coincide under certain circumstances.
In order to do so, we examine how the regularization parameter $\alpha>0$ is
connected to the minimizer of \eqref{eq:allg_max_form}.
For this purpose we first prove the continuity and monotonicity of
\begin{align}
	\alpha\mapsto\max\limits_{i\in\{1,\ldots,M\}}\sum\limits_{j=1}^N u_{ij}(\alpha)
\;.
	\label{eq:pos_ell1inf_depending_on_alpha}
\end{align}
Afterwards, we shall investigate the meaning of $\tilde{v}$ in
\eqref{eq:easy_problem} and its connection to the minimizer of $F(U)$ with
minimal $\ell^{1,\infty}$-norm. Analyzing the limits of
\eqref{eq:pos_ell1inf_depending_on_alpha} leads us to the main result of
this subsection and the connection between the two problems
\eqref{eq:allg_max_form} and \eqref{eq:easy_problem}.

\begin{rem}
~\\
	For most of the proofs in this subsection we require $F(U)$ to be continuous.
	Thus most of the results are not useful for the constrained problem
	\eqref{eq:pos_const_prob}, since
	\begin{align*}
		F(U) =
		\begin{cases}
			0 & \text{if } AUB^T = W, \\
			\infty & \text{else,}
		\end{cases}
	\end{align*}
	is not continuous. Nevertheless, in reality we have to deal with noisy data
	anyway and thus we only want to implement the unconstrained problem
	\eqref{eq:pos_unconst_prob}. This will become easier, since we can simply use
	its reformulation \eqref{eq:easy_problem}, which we will summarize in Theorem
	\ref{th:problem_implementation}.
\end{rem}
\noindent For this subsection we define the functional
$J_{\alpha}\!:\!G\longrightarrow\R{+}$ via
\begin{align}
	J_{\alpha}\left(U\right):= F(U) +
	\alpha\max\limits_{i\in\left\{1,\ldots,M\right\}}\sum\limits_{j=1}^N u_{ij} \; .
\label{eq:fctal_for_proof}
\end{align}

\begin{lem}[Continuity of \eqref{eq:pos_ell1inf_depending_on_alpha}]
\label{th:continuity_v_alpha}
~\\
	Let $F\!:\!\R{M\times N}\longrightarrow\R{+}$ be a
	convex continuous functional with bounded sublevel sets. 
	Let $U(\alpha)\in G$ be a minimizer of $J_{\alpha}$ with $\lVert
	U(\alpha)\rVert_{1,\infty}$ minimal. Then any other minimizer $V \in G$ of $J_\alpha$ satisfies
  \begin{equation}
			\max\limits_{i\in\left\{1,\ldots,M\right\}}\sum\limits_{j=1}^N v_{ij} =	\max\limits_{i\in\left\{1,\ldots,M\right\}}\sum\limits_{j=1}^N u_{ij}(\alpha)
	\end{equation}
	and the consequently well-defined map
	$\alpha\mapsto
	\max\limits_{i\in\left\{1,\ldots,M\right\}}\sum\limits_{j=1}^N u_{ij}(\alpha)$
	is a continuous function.
\end{lem}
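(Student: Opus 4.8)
The plan is to reduce both assertions to properties of the value function $m(\alpha):=\min_{U\in G} J_{\alpha}(U)$ and to exploit convexity of the set of minimizers. Throughout write $g(U):=\max_{i\in\{1,\ldots,M\}}\sum_{j=1}^N u_{ij}$, so that on $G$ one has $g(U)=\norm{U}_{1,\infty}$. Recall from Proposition \ref{th:lower_semi-continuity} that $J_\alpha$ is convex and lower semi-continuous, while the bounded-sublevel-set hypothesis on $F$ guarantees (exactly as in Theorem \ref{th:existence_unconstr_prob}) that minimizers exist and that, for $\alpha$ ranging over a bounded interval, all minimizers lie in one fixed compact set.

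First I would settle the well-definedness, i.e. that every minimizer $V$ satisfies $g(V)=g(U(\alpha))$. The minimizer set $\mathcal M_\alpha$ is convex and compact, being the level set $\{J_\alpha\leq m(\alpha)\}$ of a convex, lower semi-continuous function with bounded sublevel sets. Given $V\in\mathcal M_\alpha$, consider the segment $W_\lambda=(1-\lambda)U(\alpha)+\lambda V\in G$. Since $J_\alpha$ is constant on this segment and $J_\alpha=F+\alpha g$ with $F$ and $g$ both convex, each of $\lambda\mapsto F(W_\lambda)$ and $\lambda\mapsto g(W_\lambda)$ must be affine, because a sum of two convex functions can be affine on an interval only if each summand is. Hence $g$ is in fact affine on all of $\mathcal M_\alpha$, and $F(V)-F(U(\alpha))=-\alpha\,\bigl(g(V)-g(U(\alpha))\bigr)$. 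The crux is to upgrade this to $g(V)=g(U(\alpha))$ using the minimality of $\norm{U(\alpha)}_{1,\infty}$; I expect this to be the main obstacle.

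For the continuity I would pass to the value function. Because $m(\alpha)=\min_U\bigl(F(U)+\alpha\,g(U)\bigr)$ is an infimum of maps that are affine in $\alpha$, it is concave and therefore continuous on $\R{+}$. Moreover, for every $\beta$, evaluating at the fixed minimizer $U(\alpha)$ gives $m(\beta)\leq F(U(\alpha))+\beta\,g(U(\alpha))=m(\alpha)+(\beta-\alpha)\,g(U(\alpha))$, so $g(U(\alpha))$ is a supergradient of $m$ at $\alpha$. In fact the set of $g$-values attained on $\mathcal M_\alpha$ is an interval (image of the convex set $\mathcal M_\alpha$ under the affine map $g$) and coincides with the superdifferential $[m'_+(\alpha),m'_-(\alpha)]$. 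Since on $G$ minimal $\ell^{1,\infty}$-norm means minimal $g$-value, the selection yields $V(\alpha):=g(U(\alpha))=m'_+(\alpha)$, the right derivative of a concave function, which is automatically right-continuous; and once the first assertion furnishes $m'_+(\alpha)=m'_-(\alpha)$ for every $\alpha$ (equivalently, differentiability of $m$), $V=m'$ is continuous.

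The main obstacle is thus precisely the common-value step: showing that $\mathcal M_\alpha$ cannot contain minimizers with distinct $g$-values, equivalently that the concave map $m$ has no corners. The affineness computation above reduces this to excluding a nondegenerate stretch of minimizers along which $F$ decreases at exactly $\alpha$ times the rate at which $g$ increases, and I would attack it by combining the minimal-norm selection with the specific coercive structure of the functionals in \eqref{eq:Uv_unconstr_prob}. Finally, the stated convergence $V(\alpha_k)\to V(\alpha)$ would be assembled from a compactness-plus-lower-semicontinuity argument: the $U(\alpha_k)$ stay in a fixed compact set, a convergent subsequence has a limit that is a minimizer at the limiting parameter, and the identification $V=m'$ promotes this into continuity of the map.
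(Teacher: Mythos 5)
Your reduction is attractive, but it does not prove the lemma: the common-value assertion --- that every minimizer $V$ of $J_\alpha$ has the same value of $g(U)=\max_i\sum_j u_{ij}$ as the minimal-norm minimizer $U(\alpha)$ --- is exactly the step you leave open and flag as ``the main obstacle''. Everything else in your proposal is conditional on it: in your value-function framework, $g(U(\alpha))=m'_+(\alpha)$ is automatically right-continuous, but left-continuity at $\alpha$ is \emph{equivalent} to $m'_-(\alpha)=m'_+(\alpha)$, i.e.\ to the common-value assertion itself. So neither of the lemma's two claims is actually established; what you have is a correct and illuminating reformulation, not a proof. For comparison, the paper settles the common-value step first, by the Bregman-distance argument of \cite{Burger2004} combined with one-homogeneity of $g$, and then obtains continuity by a sequential compactness-and-stability argument along $\alpha_k\to\alpha$; your concave-value-function route to the second half is genuinely different and would be cleaner, but only once the first half is available.

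The gap is also not one you could close ``by combining the minimal-norm selection with the coercive structure'': under the lemma's stated hypotheses the assertion can fail. Take $M=N=1$, $G=[0,\infty)$, $F(u)=\max(1-u,0)+\max(u-10,0)$ and $\alpha=1$: this $F$ is convex, continuous, nonnegative, with bounded sublevel sets, yet $J_1\equiv 1$ on $[0,1]$ and $J_1(u)>1$ for all other $u\in G$, so the minimizer set is $[0,1]$ and $g(u)=u$ attains every value in $[0,1]$ there. Minimal-norm selection is irrelevant to this, since the assertion concerns \emph{all other} minimizers, and the example is coercive. The same example shows why the ingredient the paper invokes is not sufficient at this level of generality either: for $M=N=1$ the functional $g$ is linear on $G$, so its symmetric Bregman distance vanishes between \emph{any} two points, and ``vanishing Bregman distance plus one-homogeneity'' alone does not force equal $g$-values. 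What makes the argument of \cite{Burger2004} work is the strict convexity of a quadratic fidelity on the range of the forward operator, which forces the fidelity to be constant along the segment joining two minimizers and only thereby forces $g$ to be constant as well. To complete your proof you would therefore have to abandon the lemma's stated generality and exploit the concrete form of $F$ in \eqref{eq:Uv_unconstr_prob} (with $\beta=0$ this goes through); your value-function machinery would then finish the continuity claim nicely.
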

\begin{proof}
~\\
	Analogous to the arguments in \cite{Burger2004} we conclude that the Bregman
	distance $D_R(U(\alpha),V)$ vanishes, with the convex regularization functional
	$R(V) = \max\limits_{i\in\left\{1,\ldots,M\right\}}\sum\limits_{j=1}^N v_{ij}$.
	The one-homogeneity of $R$ then immediately implies $R(V) = R(U(\alpha))$. Note
	further that due to the minimizing property we also have $J_\alpha(V) =
	J_\alpha(U(\alpha))$.

	Let $U_k$ be a minimizer of $J_{\alpha_k}$ and let
	$\alpha_k\rightarrow\alpha$ be a sequence of regularization parameters. Due to
	boundedness, we are able to find subsequences $U_{k_l}\rightarrow U$.
	Because of the convergence of the subsequences $R(U_k)$ and $J_{\alpha_K}(U_k)$
	and the uniqueness of their limits, the limits of all subsequences are equal
	and we obtain convergence of the whole sequences $R(U_k) \rightarrow R(U)$ and
	$J_{\alpha_k}(U_k) \rightarrow J(U)$.
	We proceed by contradiction and claim that $U$ is not a minimizer of
	$J_{\alpha}$.
	In this case there would exist a $\widetilde{U}$ with
	\begin{align}
	\label{eq:false_assumption}
		J_{\alpha}(\widetilde{U}) < J_{\alpha}\left(U\right) \; .
	\end{align}
	Let us consider
	\begin{align*}
		J_{\alpha}\left(\frac{\alpha_k}{\alpha}\widetilde{U}\right) 
		&= F\left(\frac{\alpha_k}{\alpha}\widetilde{U}\right) +
		\alpha_k\max\limits_{i\in\left\{1,\ldots,M\right\}}\sum\limits_{j=1}^N
		\tilde{u}_{ij} \\
		&= J_{\alpha_k}(\widetilde{U}) +
		F\left(\frac{\alpha_k}{\alpha}\widetilde{U}\right) - F(\widetilde{U}) \; .
	\end{align*}
	The continuity of $F$ yields
	\begin{align*}
		F\left(\frac{\alpha_k}{\alpha}\widetilde{U}\right) - F(\widetilde{U})
		\rightarrow 0
	\end{align*}
	for $k\rightarrow\infty$.
	Since $J$ is continuous, we obtain that
	\begin{align*}
		J_{\alpha_k}(\widetilde{U}) &\rightarrow J_{\alpha}(\widetilde{U})
		\qquad\text{and} \\
		J_{\alpha_k}(U_k) &\rightarrow J_{\alpha}(U)
	\end{align*}
	hold for $k\rightarrow\infty$. By using \eqref{eq:false_assumption},
	we see that the inequality $$ J_{\alpha_k}(\widetilde{U})
	< J_{\alpha_k}(U_k) $$ has to hold as well.
	This is a contradiction to the assumption that $U_k$ is a minimizer of
	$J_{\alpha_k}$.
	Hence $U$ has to be a minimizer of $J_{\alpha}$ and we see that
	$\alpha\mapsto \alpha\max\limits_{i\in\left\{1,\ldots,M\right\}}\sum\limits_{j=1}^N
	u_{ij}(\alpha)$ is continuous.
	Thus we also know that \eqref{eq:pos_ell1inf_depending_on_alpha}
	is continuous on $\left(0,\infty\right)$.
\end{proof}

Another well-known result is the monotonicity of the regularization (cf.
\cite{Tikhonov1977} for a more general statement):
\begin{lem}[Monotonicity of \eqref{eq:pos_ell1inf_depending_on_alpha}]
~\\
	Let $F\!:\!\R{M\times N}\longrightarrow\R{+}$ be a convex continuous functional
	with bounded sublevel sets.
	Let $U(\alpha)\in G$ be a minimizer of $J_{\alpha}$ with $\lVert
	U(\alpha)\rVert_{1,\infty}$ minimal.\\
	Then $\alpha\mapsto \max\limits_{i\in\left\{1,\ldots,M\right\}}\sum\limits_{j=1}^N
	u_{ij}(\alpha)$ is a monotonically decreasing function.
\end{lem}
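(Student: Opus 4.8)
The plan is to run the standard comparison argument from regularization theory, using only the two defining optimality inequalities evaluated at two parameter values together with the cancellation of the data term $F$. Throughout I abbreviate $R(U) = \max_{i\in\{1,\ldots,M\}}\sum_{j=1}^N u_{ij}$ for the one-homogeneous regularizer, exactly as in the proof of Lemma~\ref{th:continuity_v_alpha}, so that $J_\alpha(U) = F(U) + \alpha R(U)$. The first point to settle is that the quantity I want to track is genuinely well-defined: by the preceding continuity lemma, every minimizer $V$ of $J_\alpha$ satisfies $R(V) = R(U(\alpha))$, so the value $R(U(\alpha))$ does not depend on which minimizer is selected, and in particular the minimal-$\ell^{1,\infty}$-norm choice is irrelevant for the value of the map.

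Next I fix two parameters $0 < \alpha_1 < \alpha_2$ and choose minimizers $U_1 := U(\alpha_1)$ and $U_2 := U(\alpha_2)$. Testing the optimality of $U_1$ for $J_{\alpha_1}$ against the competitor $U_2$, and the optimality of $U_2$ for $J_{\alpha_2}$ against $U_1$, gives
\begin{align*}
    F(U_1) + \alpha_1 R(U_1) &\leq F(U_2) + \alpha_1 R(U_2), \\
    F(U_2) + \alpha_2 R(U_2) &\leq F(U_1) + \alpha_2 R(U_1).
\end{align*}
Adding the two inequalities makes the terms $F(U_1)$ and $F(U_2)$ cancel on both sides, leaving $\alpha_1 R(U_1) + \alpha_2 R(U_2) \leq \alpha_1 R(U_2) + \alpha_2 R(U_1)$, which rearranges to the single inequality $(\alpha_1 - \alpha_2)\bigl(R(U_1) - R(U_2)\bigr) \leq 0$.

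Finally, since $\alpha_1 - \alpha_2 < 0$ by our choice, the product can be nonpositive only if $R(U_1) - R(U_2) \geq 0$, that is $R(U(\alpha_1)) \geq R(U(\alpha_2))$. As $\alpha_1 < \alpha_2$ were arbitrary, this is precisely the monotonic decrease of $\alpha \mapsto R(U(\alpha)) = \max_{i}\sum_{j=1}^N u_{ij}(\alpha)$ claimed in the statement. I expect no genuine obstacle here: the argument is purely algebraic once the two optimality inequalities are written down, and the only step requiring care — ensuring the map is single-valued so that the comparison is meaningful — has already been supplied by Lemma~\ref{th:continuity_v_alpha}. I emphasize that $F$ need be neither differentiable nor strictly convex for this to work; only that it enters $J_\alpha$ additively and therefore cancels is used, so the result applies verbatim to both the data-fidelity choice and the constrained choice of $F$ considered earlier.
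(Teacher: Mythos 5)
Your proof is correct. Note that the paper itself contains no proof of this lemma: it is introduced as ``another well-known result'' with a pointer to \cite{Tikhonov1977} for a more general statement, so your argument fills a gap the paper leaves to the literature rather than diverging from an existing proof. Writing $R(U)=\max_{i}\sum_{j=1}^N u_{ij}$ as in your proposal, your two-inequality comparison --- testing the optimality of $U(\alpha_1)$ against $U(\alpha_2)$ and vice versa, adding, cancelling $F$, and reading off $(\alpha_1-\alpha_2)\bigl(R(U_1)-R(U_2)\bigr)\leq 0$ --- is exactly the canonical regularization-theory argument that the citation refers to. Two small remarks. First, the comparison establishes $R(U(\alpha_1))\geq R(U(\alpha_2))$ for \emph{any} pair of minimizers, so the single-valuedness you import from Lemma~\ref{th:continuity_v_alpha} is only needed to interpret $\alpha\mapsto R(U(\alpha))$ as a function; it is not a logical prerequisite of the inequality itself, and your proof would survive even without that lemma. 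Second, your closing claim that the result applies ``verbatim'' to the constrained choice of $F$ deserves a qualification: the algebraic cancellation indeed only needs $F$ to be finite at the two minimizers, but the lemma's hypotheses (and the well-definedness statement of Lemma~\ref{th:continuity_v_alpha} that you invoke) formally require $F$ to be real-valued and continuous, which the constrained functional containing $\chi_{\{U\in\R{M\times N}\,|\,AUB^T=W\}}$ is not --- this is precisely why the paper's preceding Remark excludes the constrained problem from the results of this subsection. So the extension you assert is true for the monotonicity inequality per se, but it is an extension beyond the stated hypotheses, not an instance of them.
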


\begin{lem}
\label{th:at_least_one_i}
~\\
	Let $F\!:\!\R{M\times N}\longrightarrow\R{+}$ be a
	convex continuous functional with bounded sublevel sets.
	Let $\bar{U}\in G$ be a solution of \eqref{eq:easy_problem} such that
	$\sum\limits_{j=1}^N \bar{u}_{ij} < \tilde{v}$ holds for all
	$i\in\{1,\ldots,M\}$.\\
	Then we have $\tilde{v}>\lVert\widehat{U}\rVert_{1,\infty}$, where
	$\widehat{U}\in G$ is a minimizer of $F(U)$ with $\lVert U\rVert_{1,\infty}$
	minimal and vice versa.
\end{lem}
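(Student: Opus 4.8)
The plan is to first reduce the constrained problem to a ball constraint. Since every feasible point lies in $G$ and hence has nonnegative entries, the constraint $\sum_{j=1}^N u_{ij}\le\tilde v$ for all $i$ coincides with $\|U\|_{1,\infty}\le\tilde v$. Thus \eqref{eq:easy_problem} is exactly the minimization of $F$ over the convex set $G\cap\{U:\|U\|_{1,\infty}\le\tilde v\}$, and the hypothesis $\sum_{j=1}^N\bar u_{ij}<\tilde v$ for all $i$ says precisely that the $\ell^{1,\infty}$-constraint is \emph{strictly inactive} at $\bar U$, i.e. $\|\bar U\|_{1,\infty}=\max_i\sum_{j=1}^N\bar u_{ij}<\tilde v$.

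For the forward direction I would show that strict inactivity forces $\bar U$ to be an \emph{unconstrained} minimizer of $F$ over $G$. Arguing by contradiction, suppose there is $U'\in G$ with $F(U')<F(\bar U)$, and consider the segment $U_t=(1-t)\bar U+tU'$ for $t\in[0,1]$. Convexity of $G$ keeps $U_t\in G$; continuity of the norm together with $\|\bar U\|_{1,\infty}<\tilde v$ gives $\|U_t\|_{1,\infty}<\tilde v$ for all sufficiently small $t>0$, so $U_t$ is feasible; and convexity of $F$ yields $F(U_t)\le(1-t)F(\bar U)+tF(U')<F(\bar U)$, contradicting optimality of $\bar U$ for \eqref{eq:easy_problem}. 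Hence $F(\bar U)=\min_{U\in G}F(U)$. Since $\widehat U$ is, by assumption, the $F$-minimizer of minimal $\ell^{1,\infty}$-norm, we get $\|\widehat U\|_{1,\infty}\le\|\bar U\|_{1,\infty}<\tilde v$, which is the claim $\tilde v>\|\widehat U\|_{1,\infty}$.

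For the converse, assume $\tilde v>\|\widehat U\|_{1,\infty}$. Then $\widehat U$ itself satisfies $\sum_{j=1}^N\hat u_{ij}\le\|\widehat U\|_{1,\infty}<\tilde v$ for every $i$, so it is feasible for \eqref{eq:easy_problem} with the constraint strictly inactive in all rows. As $\widehat U$ already minimizes $F$ over all of $G$, it a fortiori minimizes $F$ over the smaller feasible set; therefore $\bar U:=\widehat U$ is a solution of \eqref{eq:easy_problem} of exactly the required type, establishing the equivalence.

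The routine bookkeeping — existence of $\widehat U$, which follows from the standing hypotheses (continuity and bounded sublevel sets of $F$ give a nonempty compact minimizer set, over which the continuous norm attains a minimum) — I would take for granted. The one genuinely load-bearing step is the segment argument in the forward direction, and the only point to watch is that one must exploit the \emph{strict} inequality $\|\bar U\|_{1,\infty}<\tilde v$ to guarantee that an entire initial piece of the segment stays feasible; were the constraint merely active this would fail, which is precisely why the hypothesis is stated with strict inequality in every row.
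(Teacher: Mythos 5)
Your proof is correct and relies on essentially the same mechanism as the paper's: the load-bearing step in both is the convex-combination (segment) argument, which uses the strict inactivity $\sum_{j=1}^N \bar{u}_{ij} < \tilde{v}$ together with convexity of $F$ to show that a small move from $\bar{U}$ toward a point of smaller $F$-value stays feasible and strictly decreases the objective, while the converse direction in both cases simply observes that $\widehat{U}$ itself is feasible and optimal when $\tilde{v} > \lVert\widehat{U}\rVert_{1,\infty}$. The only difference is organizational: you argue directly, first establishing that $\bar{U}$ is an unconstrained minimizer of $F$ over $G$ against an arbitrary competitor $U'$ and then invoking the norm-minimality of $\widehat{U}$, whereas the paper runs the same segment argument contrapositively with $\widehat{U}$ as the comparison point (which forces it to justify $F(\bar{U}) > F(\widehat{U})$ via the norm-minimality instead).
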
 
\begin{proof}
~\\
	Let be $\tilde{v}>\lVert\widehat{U}\rVert_{1,\infty}$, then $\widehat{U}$ is
	feasible for \eqref{eq:easy_problem} and obviously a minimizer as well. \\
	Let be $\tilde{v}\leq\lVert\widehat{U}\rVert_{1,\infty}$. Then in case that
	$\sum\limits_{j=1}^N\bar{u}_{ij}<\tilde{v}$ holds for all
	$i\in\{1,\ldots,M\}$, we obviously have
	$\bar{U}\not\equiv\widehat{U}$ and thus we obtain
	\begin{align*}
		F(\bar{U})>F(\widehat{U}) \; ,
	\end{align*}
	since $\widehat{U}$ is a minimizer of $F$ with $\lVert\widehat{U}\rVert_{1,\infty}$
	minimal.
	Due to convexity, we obtain
	\begin{align*}
		F(\varepsilon\widehat{U}+(1-\varepsilon)\bar{U}) \leq \varepsilon F(\widehat{U}) +
		(1-\varepsilon)F(\bar{U}) < F(\bar{U})
	\end{align*}
	and for small $\varepsilon$ we have
	\begin{align*}
		\varepsilon\sum\limits_{j=1}^N \hat{u}_{ij} +
		(1-\varepsilon)\sum\limits_{j=1}^N\bar{u}_{ij} \leq \tilde{v} \; .
	\end{align*}
	Thus we have found an element with smaller value of $F$ as the minimizer
	$\bar{U}$, which is a contradiction.
\end{proof}

\begin{lem}[Limits of \eqref{eq:pos_ell1inf_depending_on_alpha}]
\label{th:limits_of_ell1inf}
~\\
	Let $F\!:\!\R{M\times N}\longrightarrow\R{+}$ be a
	convex continuous functional with bounded sublevel sets.
	Then we have
	\begin{center}
		\begin{tabular}{l l l l}
 			$\max\limits_{i\in\left\{1,\ldots,M\right\}}\sum\limits_{j=1}^N
 			u_{ij}(\alpha) \rightarrow 0$ & for & $\alpha\rightarrow\infty$ & \quad and
 			\\
 			$\max\limits_{i\in\left\{1,\ldots,M\right\}}\sum\limits_{j=1}^N
 			u_{ij}(\alpha) \rightarrow \lVert\widehat{U}\rVert_{1,\infty}$ & for
 			& $\alpha\rightarrow 0$ &
		\end{tabular}
	\end{center}		
	with $\widehat{U}\in G$ being a minimizer of $F$ with $\lVert
	U\rVert_{1,\infty}$ minimal.
\end{lem}
\begin{proof}
~\\
	Let $U(\alpha)$ be a minimizer of $J_{\alpha}$ as proposed in
	\eqref{eq:fctal_for_proof}.
	\begin{enumerate}
		\item Consider the case of $\alpha\rightarrow\infty$.
			$U\equiv 0$ is feasible for $J_{\alpha}$, thus we obtain
			\begin{align*}
				F\left(U(\alpha)\right) +
				\alpha\max\limits_{i\in\left\{1,\ldots,M\right\}}\sum\limits_{j=1}^N
				u_{ij}(\alpha) \leq F(0) \; .
			\end{align*}
			Therefore,
			$\alpha\!\max\limits_{i\in\left\{1,\ldots,M\right\}}\sum\limits_{j=1}^N
			u_{ij}(\alpha)$ is bounded by $F(0)$ and we have
			\begin{align*}
				\max\limits_{i\in\left\{1,\ldots,M\right\}}\sum\limits_{j=1}^N
				u_{ij}(\alpha) \rightarrow 0 \quad\text{for}\quad
				\alpha\rightarrow\infty \; .
			\end{align*}
		\item Now consider the case of $\alpha\rightarrow 0$.
			We can find a subsequence $\alpha_k\rightarrow 0$ such that 
			\begin{align*}
				U\left(\alpha_k\right)\rightarrow\widehat{U}
			\end{align*}
			holds, where $\widehat{U}$ is a minimizer of $F$ with
			$\left\|U\right\|_{1,\infty}$ minimal.
			Obviously $\widehat{U}$ is feasible for $J_{\alpha_k}$.
			Hence we obtain
			\begin{align*}
				F\left(U(\alpha_k)\right) +
				\alpha_k\max\limits_{i\in\left\{1,\ldots,M\right\}}\sum\limits_{j=1}^N
				u_{ij}(\alpha_k) \leq F(\widehat{U}) +
				\alpha_k\lVert\widehat{U}\rVert_{1,\infty} \; .
			\end{align*}
			Since $\widehat{U}$ is a minimizer of $F$, it has to hold that
			$F(U(\alpha_k))\geq F(\widehat{U})$ and thus we obtain
			\begin{align*}
				\max\limits_{i\in\left\{1,\ldots,M\right\}}\sum\limits_{j=1}^N
				u_{ij}(\alpha_k) \leq \lVert\widehat{U}\rVert_{1,\infty} \; .
			\end{align*}
			Obviously $U(\alpha)$ is feasible for \eqref{eq:easy_problem} with
			$$\tilde{v}=\max\limits_{i\in\left\{1,\ldots,M\right\}}\sum\limits_{j=1}^N
			u_{ij}(\alpha_k) \; .$$
			Then Lemma \ref{th:at_least_one_i} yields
			\begin{align*}
				\sum\limits_{j=1}^N u_{ij}(\alpha_k) =
				\max\limits_{i\in\left\{1,\ldots,M\right\}}\sum\limits_{j=1}^N
				u_{ij}(\alpha_k) = \lVert U(\alpha_k)\rVert_{1,\infty} \quad\text{ for some
				} i\in \left\{1,\ldots,M\right\} \; .
			\end{align*}
			Due to the lower semi-continuity of the norm, we obtain
			\begin{align*}
				\liminf_{\alpha_k\rightarrow 0} \;
				\max\limits_{i\in\left\{1,\ldots,M\right\}}\sum\limits_{j=1}^N u_{ij}(\alpha_k) =
				\liminf_{\alpha_k\rightarrow 0} \; \lVert U(\alpha_k)\rVert_{1,\infty}
				\geq \lVert\widehat{U}\rVert_{1,\infty}
			\end{align*}
			and finally we have
			\begin{align*}
				\max\limits_{i\in\left\{1,\ldots,M\right\}}\sum\limits_{j=1}^N
				u_{ij}(\alpha) \rightarrow \lVert\widehat{U}\rVert_{1,\infty}
				\quad\text{for}\quad \alpha\rightarrow 0 \; .
			\end{align*}
	\end{enumerate}
\end{proof}
\noindent
In Figure \ref{fig:bound_for_l1inf} we see an example on how the function
$$v(\alpha):=\max\limits_{i\in\{1,\ldots,M\}}\sum\limits_{j=1}^N
u_{ij}(\alpha)$$ could look, where $U(\alpha)\in G$ is a minimizer of
$J_{\alpha}$ with $\lVert U(\alpha)\rVert_{1,\infty}$ minimal.\\
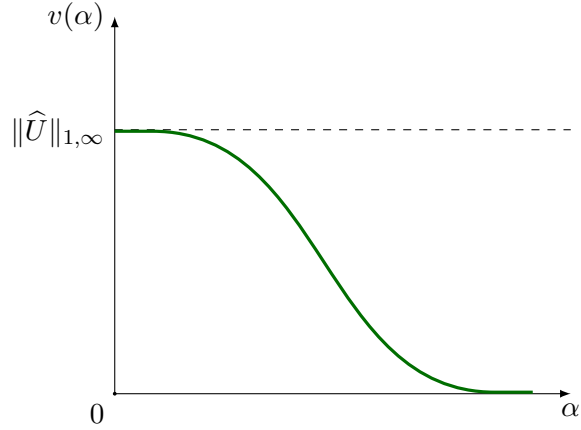
\begin{figure}[htb]
	\begin{center}
  		\begin{tikzpicture}[>=latex]
	\draw [->] (0,0) node [below left] {$0$} -- (0,5) node [left] {$v(\alpha)$};
	\draw [->] (0,0) -- (6,0) node [below] {$\alpha$};
	\filldraw (0,0) circle (0.5pt);
	\draw [dashed] (0,3.5) node [left] {$\|\widehat{U}\|_{1,\infty}$} -- (6,3.5);
	\draw [color=mygreen,line width=1.2pt] (0,3.48) -- (0.5,3.48) .. controls
	(2.75,3.48) and (2.75,0.02) ..
	(5,0.02) -- (5.5,0.02);
\end{tikzpicture}
		\caption[Relation between the regularization parameter and the nonnegative
		$\ell^{1,\infty}$-norm of the corresponding minimizer]{Illustration of the
		relation between the regularization parameter $\alpha$ and the nonnegative
		$\ell^{1,\infty}$-norm $v(\alpha)$ of the corresponding minimizer as defined
		above}
		\label{fig:bound_for_l1inf}
	\end{center}
\end{figure}
~\\
\begin{rem}
~\\
	In case that $\partial F(0)\neq \emptyset$ holds, we have
	$$\max\limits_{i\in\left\{1,\ldots,M\right\}}\sum\limits_{j=1}^N
	u_{ij}(\alpha) = 0$$ already for $\alpha<\infty$, but large enough.\\
	We see this by considering $P_0\in\partial F(0)$ and then selecting
	$P=-\frac{1}{\alpha}P_0$. Here we choose $\alpha$ large enough that we
	have $\norm{P}_{\infty,1}<1$ and obtain
	$$ \scalpr{P,V}_{F} \leq \norm{P}_{\infty,1}J(V) \leq J(V) \; .$$
	Thus $P$ is a subgradient at $U(\alpha)=0$ and we see that the optimality
	condition is fulfilled.
\end{rem}

\noindent Finally, we can conclude the following essential statement:

\begin{thm}[Connection of the Solutions of \eqref{eq:allg_U_v_form} and
\eqref{eq:easy_problem}]
\label{th:problem_implementation}
~\\
	Let $F\!:\!\R{M\times N}\longrightarrow\R{+}$ be a convex continuous functional
	with bounded sublevel sets.
	Let be $\tilde{v}\in (0,\lVert\widehat{U}\rVert_{1,\infty})$, where
	$\widehat{U}$ is a minimizer of $F(U)$ with $\lVert U\rVert_{1,\infty}$ minimal. 
	Let $\bar{U}\in G$ be a solution of
	\begin{align}
		\min\limits_{U\in G} \; F(U) \quad\text{s. t.}\quad \sum\limits_{j=1}^N
		u_{ij}\leq\tilde{v} \;.
	\tag{\ref{eq:easy_problem}}
	\end{align}
	Then there exists an $\alpha>0$ such that $\bar{U}$ is a solution of
	\begin{align}
		\min\limits_{U\in G} \; F(U) +
		\alpha\max\limits_{i\in\{1,\ldots,M\}}\sum\limits_{j=1}^N u_{ij} \; .
	\tag{\ref{eq:allg_max_form}}
	\end{align}
\end{thm}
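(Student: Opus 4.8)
The goal is to show that the constrained problem \eqref{eq:easy_problem}, with the row-sum bound $\tilde v$, is equivalent to the penalized problem \eqref{eq:allg_max_form} for a suitable choice of $\alpha>0$. The natural strategy is to exploit the two preceding results: Lemma \ref{th:limits_of_ell1inf}, which pins down the range of the value function $v(\alpha)=\max_i\sum_j u_{ij}(\alpha)$ as $\alpha$ sweeps $(0,\infty)$, together with the continuity (Lemma \ref{th:continuity_v_alpha}) and monotonicity of this map. Since $\tilde v$ is assumed to lie strictly between $0$ and $\lVert\widehat U\rVert_{1,\infty}$, these two limits show that $\tilde v$ is attained in the interior of the range of $v(\alpha)$, so an intermediate-value argument should furnish an $\alpha$ realizing it.

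\textbf{The key steps, in order.}
First I would invoke Lemma \ref{th:limits_of_ell1inf} to record that $v(\alpha)\to \lVert\widehat U\rVert_{1,\infty}$ as $\alpha\to 0$ and $v(\alpha)\to 0$ as $\alpha\to\infty$. Combined with the continuity of $\alpha\mapsto v(\alpha)$ on $(0,\infty)$ from Lemma \ref{th:continuity_v_alpha}, the intermediate value theorem yields some $\alpha^\ast>0$ with $v(\alpha^\ast)=\tilde v$, since $\tilde v\in(0,\lVert\widehat U\rVert_{1,\infty})$. Second, I would fix a minimizer $U(\alpha^\ast)\in G$ of $J_{\alpha^\ast}$ with minimal $\ell^{1,\infty}$-norm; by construction $\max_i\sum_j u_{ij}(\alpha^\ast)=\tilde v$, so $U(\alpha^\ast)$ is feasible for \eqref{eq:easy_problem}. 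Third, I would argue that this $U(\alpha^\ast)$ actually solves \eqref{eq:easy_problem}, and conversely that the given solution $\bar U$ of \eqref{eq:easy_problem} solves \eqref{eq:allg_max_form} for $\alpha=\alpha^\ast$.

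\textbf{The equivalence at fixed $\alpha^\ast$.}
For the penalized-to-constrained direction, any feasible $U$ for \eqref{eq:easy_problem} satisfies $\max_i\sum_j u_{ij}\le\tilde v = \max_i\sum_j u_{ij}(\alpha^\ast)$, and hence $J_{\alpha^\ast}(U)=F(U)+\alpha^\ast\max_i\sum_j u_{ij}\le F(U)+\alpha^\ast\tilde v$; since $U(\alpha^\ast)$ minimizes $J_{\alpha^\ast}$ and attains $\max_i\sum_j u_{ij}(\alpha^\ast)=\tilde v$, comparing objectives gives $F(U(\alpha^\ast))\le F(U)$, so $U(\alpha^\ast)$ minimizes $F$ over the feasible set, i.e. solves \eqref{eq:easy_problem}. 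For the converse, let $\bar U$ solve \eqref{eq:easy_problem}; then $F(\bar U)=F(U(\alpha^\ast))$ and $\max_i\sum_j\bar u_{ij}\le\tilde v$. Evaluating $J_{\alpha^\ast}(\bar U)=F(\bar U)+\alpha^\ast\max_i\sum_j\bar u_{ij}\le F(U(\alpha^\ast))+\alpha^\ast\tilde v = J_{\alpha^\ast}(U(\alpha^\ast))$, and since $U(\alpha^\ast)$ is a global minimizer of $J_{\alpha^\ast}$ the inequality is an equality, so $\bar U$ minimizes $J_{\alpha^\ast}$ as well. This is exactly the claimed conclusion.

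\textbf{Main obstacle.}
The delicate point is the complementarity between the constraint being active and the penalty parameter being positive: I must ensure the chosen $\alpha^\ast$ is strictly positive and that the constraint $\sum_j u_{ij}\le\tilde v$ is genuinely active at the optimum (otherwise the comparison $\max_i\sum_j\bar u_{ij}\le\tilde v$ would not tie $\bar U$'s penalty term to that of $U(\alpha^\ast)$). This is where the strict bound $\tilde v<\lVert\widehat U\rVert_{1,\infty}$ is essential, via Lemma \ref{th:at_least_one_i}, which guarantees that at $\tilde v$ the row-sum constraint is attained with equality in at least one row rather than being slack; the hypothesis $\tilde v>0$ rules out the degenerate case $\alpha^\ast=\infty$. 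Subtleties in the monotonicity map (it may be only non-strictly decreasing, so several $\alpha$ could give the same $v(\alpha)$) do not cause trouble, since I only need existence of one such $\alpha^\ast$.
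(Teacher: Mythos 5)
Your proof is correct and follows exactly the route the paper intends: the theorem is stated as a direct consequence of the preceding lemmas (continuity of $\alpha\mapsto v(\alpha)$, its limits as $\alpha\to 0$ and $\alpha\to\infty$, and the activity of the constraint), and your intermediate-value argument plus the two-way objective comparison at the resulting $\alpha^\ast$ is precisely the assembly the paper leaves implicit after the phrase ``Finally, we can conclude the following essential statement.'' No gaps: the equality $v(\alpha^\ast)=\tilde v$ makes the penalized minimizer feasible and optimal for \eqref{eq:easy_problem}, and the reverse comparison transfers optimality to $\bar U$, exactly as you argue.
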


\begin{rem}
~\\
	If \;$U$ is a solution of \eqref{eq:easy_problem}, we can directly decide,
	whether there exists an $\alpha$ for this problem, i.e. in the case of
	\begin{align*}
		\tilde{v} = \lVert U \rVert_{1,\infty} \; .
	\end{align*}
\end{rem}

\begin{thm}[Existence of a Solution of \eqref{eq:easy_problem}]
~\\
	Let $F\!:\!\R{M\times N}\longrightarrow\R{+}$ be a convex continuous functional
	with bounded sublevel sets.
	Then there exists a minimizer of \eqref{eq:easy_problem}.
\end{thm}
\begin{proof}
~\\
	We can write \eqref{eq:easy_problem} using the characteristic function, i.e.
	\begin{align*}
		\min_{U\in G}\; F(U) + \chi_{\left\{U\in\R{M\times N}\,|\,\sum_{j=1}^N
		u_{ij}\,\leq\,\tilde{v}\;\forall\,i\in\{1,\ldots,M\}\right\}} \; .
	\end{align*}
	For $\xi\in\R{}$ we consider the sublevel set
	\begin{align*}
		\mathcal{S}_{\xi} = \left\{U\in G\,|\, F(U) + \chi_{\left\{U\in\R{M\times N}\,|\,\sum_{j=1}^N
		u_{ij}\,\leq\,\tilde{v}\;\forall\,i\in\{1,\ldots,M\}\right\}} \leq \xi
		\right\} \; .
	\end{align*}
	Since $F$ is continuous and $\chi(0)=0$ holds, we have $0\in \mathcal{S}_{\xi}$ and thus
	$\mathcal{S}_{\xi}$ is not empty.
	Furthermore, $\mathcal{S}_{\xi}$ is bounded, since the norm of $U$ is bounded.
	Additionally, the functional stays lower semicontinuous and we obtain the
	existence of a minimizer of \eqref{eq:easy_problem}.
\end{proof}

\noindent By using Theorem \ref{th:problem_implementation} for problem
\eqref{eq:Uv_unconstr_prob}, we obtain
\begin{align}
	\min_{U\in G} \frac{1}{2}\left\|\gamma\left(AUB^T-W\right)\right\|_F^2 +
	\beta\sum\limits_{i=1}^M\sum\limits_{j=1}^N u_{ij} \quad \text{ s.t. }
	\quad \sum\limits_{j=1}^N u_{ij}\leq \tilde{v} \; .
	\label{eq:prob_for_algorithm}
\end{align}
Note that we need to look for a suitable regularization parameter in the
implementation anyway. Thus we can instead determine a suitable $\tilde{v}$ and
obtain an easier optimization problem.

\subsection{Asymptotic 1-Sparsity}
\label{asymptotic_one-sparsity}

In this section we consider the asymptotics of our reformulated problem
\eqref{eq:prob_for_algorithm} with $\beta = 0$ and $\gamma=1$, i.e.
\begin{align}
	\min_{U\in G} \;\frac{1}{2}\left\|AUB^T-W\right\|_F^2 \st \sum_{j=1}^N
	u_{ij}\leq\tilde{v} \; ,
	\label{eq:unconstr_prob_for_impl_without_beta}
\end{align}
where the $\ell^{1,\infty}$-regularization gains more and more influence, which
means that the regularization parameter $\tilde{v}$ shall go to zero.
In the limit case we observe that we indeed obtain a special kind of sparsity in
every row, i.e. we are able to determine the number of nonzero elements in
each row.
In a special case we are moreover able to locate the nonzero entries of a
solution of \eqref{eq:unconstr_prob_for_impl_without_beta} even if $\tilde{v}$
is nonzero but small enough.

In order to analyze \eqref{eq:unconstr_prob_for_impl_without_beta}
asymptotically, we consider the rescaling $X:=\tilde{v}^{-1}U \Leftrightarrow
U=\tilde{v}X$ and obtain the new variational problem
\begin{align}
	\min_{X\in G} \;\frac{1}{2}\norm{\tilde{v}AXB^T-W}_F^2 \st \sum_{j=1}^N
	x_{ij}\leq 1 \; .
	\label{eq:rescaled_problem}
\end{align}
Let us now analyze the structure of a solution of \eqref{eq:rescaled_problem}
for $\tilde{v}\rightarrow 0$.
\begin{thm}
\label{th:rescaled_asymptotic_sparsity}
~\\
	Let $k_i$ be the number of maxima in the $i$th row of $Y:=A^TWB$
	and let $X(\tilde{v})$ be a minimizer of \eqref{eq:rescaled_problem}.
	Then the $i$th row of
	\begin{align*}
		\bar{X} := \lim_{\tilde{v}\rightarrow 0} X(\tilde{v})
	\end{align*}
	is at most $k_i$-sparse.
\end{thm}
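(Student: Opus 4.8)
The plan is to reduce the problem, as $\tilde v\to 0$, to a linear program whose optimal solutions reveal the row support. First I would expand the Frobenius data term and use the cyclic property of the trace to rewrite the cross term as
\begin{align*}
	\scalpr{AXB^T, W}_F = \scalpr{X, A^T W B}_F = \scalpr{X, Y}_F ,
\end{align*}
so that, up to the additive constant $\frac12\norm{W}_F^2$, the objective of \eqref{eq:rescaled_problem} equals $-\tilde v\,\scalpr{X,Y}_F + \frac{\tilde v^2}{2}\norm{AXB^T}_F^2$. Dropping the constant and dividing by $\tilde v>0$, the minimizer of \eqref{eq:rescaled_problem} is exactly the minimizer of
\begin{align*}
	\Phi_{\tilde v}(X) := -\scalpr{X,Y}_F + \frac{\tilde v}{2}\norm{AXB^T}_F^2
\end{align*}
over the feasible set $K := \{X\in G \mid \sum_{j=1}^N x_{ij}\le 1 \ \forall i\}$.

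The second step is the limit passage. Since $K$ is compact and $\norm{AXB^T}_F^2$ is bounded on $K$, the $O(\tilde v)$ quadratic correction vanishes uniformly, and for any fixed feasible $X$ the comparison inequality $\Phi_{\tilde v}(X(\tilde v))\le\Phi_{\tilde v}(X)$ yields, after sending $\tilde v\to 0$,
\begin{align*}
	-\scalpr{\bar X, Y}_F \le -\scalpr{X, Y}_F .
\end{align*}
Hence every limit point $\bar X$ of $X(\tilde v)$ maximizes the linear functional $X\mapsto\scalpr{X,Y}_F = \sum_{i,j} x_{ij}y_{ij}$ over $K$. Since the argument applies to every subsequential limit, the conclusion does not hinge on uniqueness of the limit.

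The third step solves this program. Because objective and constraints decouple across rows, the $i$th row of $\bar X$ must maximize $\sum_{j=1}^N x_{ij}y_{ij}$ subject to $x_{ij}\ge 0$ and $\sum_j x_{ij}\le 1$. Writing $m_i := \max_j y_{ij}$, I would argue that any maximizer places weight only on indices attaining $m_i$: if $\bar x_{ij_0}>0$ for some $j_0$ with $y_{ij_0}<m_i$, then
\begin{align*}
	\sum_{j=1}^N \bar x_{ij}y_{ij} \le m_i - \bar x_{ij_0}(m_i - y_{ij_0}) < m_i ,
\end{align*}
contradicting optimality, since a unit weight on a single maximizing index attains $m_i$ when $m_i>0$ (and $\bar x_{i\bullet}=0$ attains the optimum $0$ when $m_i\le 0$). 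Thus the support of the $i$th row lies in $\{j : y_{ij}=m_i\}$, a set of cardinality $k_i$, so the row is at most $k_i$-sparse.

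The main obstacle is the limit passage in the second step: one must rule out that the minimizer's mass leaks onto strictly submaximal indices as $\tilde v\to 0$. This is precisely what the comparison inequality controls, because the linear term dominates the $O(\tilde v)$ quadratic term and the boundedness of $K$ prevents the latter from blowing up. A minor point to record is the degenerate case $m_i\le 0$, where the limiting row is forced to be supported on the zero-level maximizers (or is identically zero), which is still consistent with the claimed $k_i$-sparsity.
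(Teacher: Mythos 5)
Your proposal is correct and takes essentially the same route as the paper: expand the squared data term, divide by $\tilde v$, pass to the limiting linear program $\max_{X}\left\langle X, Y\right\rangle_F$ over the row-constrained feasible set, and solve it row by row to conclude that the support of each row lies in the set of indices where that row of $Y$ attains its maximum. The differences are ones of rigor rather than method: you justify the limit passage via the comparison inequality and subsequential limits (where the paper merely asserts that the quadratic term vanishes and the limit problem ``holds''), and you treat the degenerate case $\max_j y_{ij}\le 0$ explicitly (where the paper's stronger claim $\sum_{n\in J_i}x_{in}=1$ need not hold, though the sparsity conclusion is unaffected), both of which strengthen rather than change the argument.
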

\begin{proof}
~\\
	After simplifying the norm and dividing by $\tilde{v}$ in
	\eqref{eq:rescaled_problem}, we can equivalently consider
	\begin{align*}
		\min_{X\in G} \;\frac{\tilde{v}}{2}\norm{AXB^T}_F^2 - \left\langle
		AXB^T,W\right\rangle_F \st \sum_{j=1}^N x_{ij}\leq 1 \; ,
	\end{align*}
	where $\left\langle
	C,D\right\rangle_F:=\sum\limits_{l=1}^L\sum\limits_{k=1}^Tc_{ij}d_{ij}$
	denotes the Frobenius inner product. For the case that we have
	$\tilde{v}\rightarrow 0$, the first summand tends to zero and thus
	\begin{align}
		\max_{X\in G} \;\left\langle X,A^TWB\right\rangle_F \st \sum_{j=1}^N
		x_{ij}\leq 1
		\label{eq:reformulated_limit_problem}
	\end{align}
	holds. With the above definition of $Y$ we shall now consider
	\begin{align*}
		\max_{X\in G} \;\sum_{i=1}^M\sum_{j=1}^N y_{ij}x_{ij} \st \sum_{j=1}^N
		x_{ij}\leq 1 \; .
	\end{align*}
	Let $J_i$ be the column index set at which the maximum of the $i$th row of $Y$
	is reached, i.e.
	\begin{align*}
		J_i = \left\{n\in\{1,\ldots,N\}\,|\,y_{in}\geq
		y_{ij}\;\forall\,j\in\{1,\ldots,N\}\right\}
	\end{align*}
	for every $i\in\{1,\ldots,M\}$.
	Since we have the constraint that the row sum of $X$ should not exceed $1$,
	we obtain the inequality
	\begin{align*}
		\sum_{j=1}^N y_{ij}x_{ij}\leq y_{in} \qquad\forall\;n\in J_i \;
	\end{align*}
	for every $i\in\{1,\ldots,M\}$.
	Hence we see that for a solution of \eqref{eq:reformulated_limit_problem} has
	to hold
	\begin{align*}
		\sum_{n\in J_i}x_{in} = 1 \qquad\text{and}\qquad x_{ij} = 0
		\quad\forall\;j\notin J_i \; .
	\end{align*}
	This means that the $i$th row of the solution of
	\eqref{eq:reformulated_limit_problem} has at most $k_i$ nonzero entries.
	Thus the $i$th row of $\bar{X}$ is at most $k_i$-sparse, maybe even sparser.
\end{proof}
\begin{rem}
~\\
	In case that the $i$th row of $\bar{X}$ is $k_i$-sparse, the asymptotic
	solution $\bar{X}$ has nonzero entries at the same positions as $Y=A^TWB$ has
	its maxima in each row.\\
	Note that the row-maxima are not necessarily unique. However, in the
	case that for every $i\in\{1,\ldots,M\}$ the index set $J_i$ contains only one
	element, the rows of $\bar{X}$ are 1-sparse.
\end{rem}
Theorem \ref{th:rescaled_asymptotic_sparsity} raises the question, whether
there exists a small regularization parameter $\tilde{v}$, for which
$X(\tilde{v})$ is already $k_i$-sparse. In this case we could apply this
knowledge to the original problem
\eqref{eq:unconstr_prob_for_impl_without_beta}, which is not possible in the
limit case, since then $\bar{U}:=\lim\limits_{\tilde{v}\rightarrow
0}U(\tilde{v})$ would be equal to zero.
\begin{thm}
~\\
	Let the $\ell^2$-norm of the columns of $A\in\R{L\times M}$ and
	$B\in\R{T\times N}$ be nonzero, i.e.
	\begin{align*}
		\norm{a_{\cdot i}}_2 > 0 \quad\forall\; i\in\{1,\ldots,M\}
		\qquad\text{and}\qquad \norm{b_{\cdot j}}_2 > 0 \quad\forall\;
		j\in\{1,\ldots,N\} \; .
	\end{align*}
	Then there exists a regularization parameter $\tilde{v}>0$ such that the
	solution of \eqref{eq:unconstr_prob_for_impl_without_beta} has nonzero entries
	at the same positions as $Y:=A^TWB$ has row-maxima.
\end{thm}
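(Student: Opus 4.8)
The plan is to pass to the rescaled problem \eqref{eq:rescaled_problem}: since $U(\tilde v)=\tilde v X(\tilde v)$ and $\tilde v>0$, the minimizers of \eqref{eq:unconstr_prob_for_impl_without_beta} and \eqref{eq:rescaled_problem} have identical support, so it suffices to locate the support of a minimizer $X(\tilde v)$ of \eqref{eq:rescaled_problem}. Following the proof of Theorem \ref{th:rescaled_asymptotic_sparsity}, after expanding the square and dividing by $\tilde v$ this minimizer also minimizes
\begin{align*}
	\min_{X\in G}\; \frac{\tilde v}{2}\norm{AXB^T}_F^2 - \scalpr{X,Y}_F \st \sum_{j=1}^N x_{ij}\leq 1 \; ,
\end{align*}
where $Y=A^TWB$, and I would treat the quadratic term as a small perturbation of the purely linear limit problem \eqref{eq:reformulated_limit_problem}, whose maximizers are supported on the row-maxima index sets $J_i$. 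Existence of a minimizer for each $\tilde v>0$ follows as before from continuity of the objective and compactness of the feasible set $\{X\in G\,|\,\sum_j x_{ij}\leq 1\}$.

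The heart of the argument is a quantitative exchange step showing there is a threshold $\tilde v_0>0$ such that for all $0<\tilde v<\tilde v_0$ every minimizer satisfies $x_{ij}(\tilde v)=0$ whenever $j\notin J_i$. Suppose to the contrary that a minimizer $X$ has $x_{ij}>0$ for some $j\notin J_i$, fix $j'\in J_i$, and move this mass within row $i$ by setting $x'_{ij'}=x_{ij'}+x_{ij}$, $x'_{ij}=0$ and leaving all other entries unchanged; then $X'\in G$ is feasible, as the row sum of row $i$ is preserved. The linear part strictly decreases,
\begin{align*}
	-\scalpr{X',Y}_F + \scalpr{X,Y}_F = -x_{ij}\,\delta_{ij}, \qquad \delta_{ij}:=\max_n y_{in}-y_{ij}>0 \; ,
\end{align*}
while the update $A(X'-X)B^T=x_{ij}\,a_{\cdot i}(b_{\cdot j'}-b_{\cdot j})^T$ is rank one with $\norm{A(X'-X)B^T}_F=x_{ij}\norm{a_{\cdot i}}_2\norm{b_{\cdot j'}-b_{\cdot j}}_2$, so expanding the square and using that $\norm{AXB^T}_F$ is bounded on the compact feasible set yields
\begin{align*}
	\frac{\tilde v}{2}\left(\norm{AX'B^T}_F^2-\norm{AXB^T}_F^2\right) \leq \tilde v\,C\,x_{ij}
\end{align*}
with a constant $C$ depending only on $A$ and $B$. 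Hence the objective changes by at most $x_{ij}(\tilde v C-\delta_{ij})$, which is strictly negative once $\tilde v<\delta_{ij}/C$, contradicting optimality of $X$; choosing $\tilde v_0:=\min_i\min_{j\notin J_i}\delta_{ij}/C>0$ proves the localization. I expect the delicate point to be precisely the uniformity of $C$ (and hence of $\tilde v_0$) in both $\tilde v$ and the chosen minimizer, which is what the boundedness of the feasible set secures.

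It remains to verify that the maximizing positions actually carry mass. For a row $i$ with $\max_n y_{in}>0$, the hypotheses $\norm{a_{\cdot i}}_2>0$ and $\norm{b_{\cdot j}}_2>0$ are exactly what rule out identically zero rows or columns of $Y$, and hence spurious maxima attained at value zero; in this non-degenerate case the row cannot be optimally zero. Indeed, activating a maximizer $j^*\in J_i$ by $\varepsilon>0$ changes the objective by
\begin{align*}
	\varepsilon\left(-\max_n y_{in} + \tilde v\,\scalpr{AXB^T,a_{\cdot i}b_{\cdot j^*}^T}_F\right)+O(\varepsilon^2) \; ,
\end{align*}
whose linear coefficient is negative for $\tilde v<\tilde v_0$ (shrinking $\tilde v_0$ if needed, using again the uniform bound on $\norm{AXB^T}_F$). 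Together with the localization, the nonzero entries of $X(\tilde v)$, and therefore of $U(\tilde v)=\tilde v X(\tilde v)$, sit precisely where $Y$ attains its row-maxima; when each $J_i$ is a singleton — the 1-sparse situation of the preceding remark — this pins the support down exactly, and translating back to $U$ completes the proof.
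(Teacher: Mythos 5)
Your proof is correct in its core and takes a genuinely different route from the paper's. The paper argues through the KKT system of the rescaled problem \eqref{eq:rescaled_problem}: it forms the Lagrange functional, writes the stationarity condition $\tilde{v}x_{ij}\norm{a_{\cdot i}}_2^2\norm{b_{\cdot j}}_2^2 = y_{ij}-\lambda_i+\mu_{ij}-\tilde{v}h_{ij}$, subtracts the conditions at an index $j$ in the support of the solution and an index $n$ outside it, and derives the strict domination $y_{ij}>y_{in}$ by contradiction; the hypothesis $\norm{a_{\cdot i}}_2>0$, $\norm{b_{\cdot j}}_2>0$ enters exactly once, to make the left-hand side strictly positive. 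Your exchange argument replaces all of this by a direct comparison of objective values: transporting the mass $x_{ij}$ from a non-maximal column to a maximal one gains $x_{ij}\delta_{ij}$ in the linear term and costs at most $\tilde{v}Cx_{ij}$ in the quadratic term, with $C$ uniform by compactness of the feasible set. This is more elementary (no multipliers), and it actually repairs a soft spot in the paper's own argument: there, the smallness condition on $\tilde{v}$ is formulated through $|d_{ijn}|=|h_{ij}-h_{in}|$, a quantity depending on the solution $X(\tilde{v})$ and hence on $\tilde{v}$ itself, and this circularity is only implicitly resolved; your threshold $\tilde{v}_0=\min_i\min_{j\notin J_i}\delta_{ij}/C$ is manifestly independent of $\tilde{v}$ and of the particular minimizer. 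What the paper's route buys in return is the sharper structural conclusion that the $y$-values on the support strictly dominate those off the support, which immediately forces the row-maxima of $Y$ to lie inside the support.

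There is, however, a genuine flaw in your final step. You assert that the hypotheses $\norm{a_{\cdot i}}_2>0$ and $\norm{b_{\cdot j}}_2>0$ rule out rows of $Y$ whose maximum is not positive. They do not: $Y=A^TWB$ depends on $W$, and for $W=0$, or for $W$ with unfavorable signs, one can have $\max_n y_{in}\leq 0$ in some or all rows even though every column of $A$ and $B$ is nonzero; in that case the corresponding row of the minimizer is identically zero and your first-order activation argument is unavailable. Consequently, what your proof fully establishes is the containment statement: for all $\tilde{v}$ below a uniform threshold, every nonzero entry of the solution sits at a row-maximum of $Y$. The converse inclusion --- every row-maximum carries mass --- needs $\max_n y_{in}>0$ as an extra assumption, and it can genuinely fail when the maximum is attained at several columns, since the quadratic term may concentrate all mass on a strict subset of $J_i$. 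To be fair, the paper's proof suffers from mirror-image degeneracies (it tacitly assumes a nonempty support, and its contradiction step needs the strict inequality $y_{in}>y_{ij}$, which is unavailable under ties), so this is a shared weakness of the statement rather than a defect of your strategy alone. A telling symptom is that your localization step never invokes the column-norm hypothesis at all, while the paper's proof cannot do without it: the two arguments prove the common containment core by different means, and neither settles the full equality of supports without additional non-degeneracy assumptions.
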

\begin{proof}
~\\
	Let us consider the rescaled problem \eqref{eq:rescaled_problem}.
	After simplifying the norm and dividing by $\tilde{v}$, we consider
	equivalently
	\begin{align*}
		\min_{X\in G} \;\frac{\tilde{v}}{2}\norm{AXB^T}_F^2 - \left\langle
		X,A^TWB\right\rangle_F \st \sum_{j=1}^N x_{ij}\leq 1
	\end{align*}
	and thus we have
		\begin{align*}
		\min_{X\in G} \;\frac{\tilde{v}}{2}\sum_{l=1}^L\sum_{k=1}^T
		\left(\sum_{i=1}^M\sum_{j=1}^N a_{li}x_{ij}b_{kj}\right)^{\!\!2}
		-\sum_{i=1}^M\sum_{j=1}^N x_{ij}y_{ij} \st \sum_{j=1}^N x_{ij}\leq 1 \; ,
	\end{align*}
	where we use again $Y:=A^TWB$.
	The Lagrange functional reads as follows:
	\begin{align*}
		\mathcal{L}(X;\lambda,\mu) =
		&\;\frac{\tilde{v}}{2}\sum_{l=1}^L\sum_{k=1}^T \left(\sum_{i=1}^M\sum_{j=1}^N
		a_{li}x_{ij}b_{kj}\right)^{\!\!2} -\sum_{i=1}^M\sum_{j=1}^N x_{ij}y_{ij} \\
		&+ \sum_{i=1}^M\lambda_i\left(\sum_{j=1}^N x_{ij}-1\right) -
		\sum_{i=1}^M\sum_{j=1}^N\mu_{ij}x_{ij}
	\end{align*}
	with
	\begin{align*}
		\lambda_i \geq 0 \quad\text{and}\quad &\lambda_i\left(\sum_{j=1}^N
		x_{ij}-1\right) = 0 \; , \\
		\mu_{ij} \geq 0 \quad\text{and}\quad &\mu_{ij}x_{ij} = 0 \; .
	\end{align*}
	Let us now consider the optimality condition
	\begin{align*}
		0 = \partial_{x_{ij}}\mathcal{L} &= \tilde{v}x_{ij}\norm{a_{\cdot i}}_2^2
		\norm{b_{\cdot j}}_2^2 + \tilde{v}h_{ij} - y_{ij} + \lambda_i -\mu_{ij} \\
		\Leftrightarrow\quad \tilde{v}x_{ij} \norm{a_{\cdot i}}_2^2 \norm{b_{\cdot
		j}}_2^2 &= y_{ij} - \lambda_i + \mu_{ij} - \tilde{v}h_{ij} \; ,
	\end{align*}
	where $h_{ij}$ denotes the sum of the mixed terms resulting from the data term,
	which are independent from $x_{ij}$, i.e.
	\begin{align*}
		h_{ij} := \sum_{m\neq i}\sum_{n\neq j}\scalpr{a_{\cdot i},a_{\cdot
		m}}x_{mn}\scalpr{b_{\cdot n},b_{\cdot j}} + \norm{b_{\cdot j}}_2^2\sum_{m\neq
		i}\scalpr{a_{\cdot i},a_{\cdot m}}x_{mj} + \norm{a_{\cdot i}}_2^2\sum_{n\neq
		j}x_{in}\scalpr{b_{\cdot n},b_{\cdot j}} \; .
	\end{align*}
	Let now $\tilde{v}>0$ hold and let $J_i$ be the index set for which the entries
	of the $i$th row of the solution of
	\eqref{eq:unconstr_prob_for_impl_without_beta} are nonzero.
	We show that $y_{ij} > y_{in}$ holds for all $j\in J_i$ and for all $n\notin
	J_i$.
	In order to do so, we consider
	\begin{align*}
		0 = \partial_{x_{ij}}\mathcal{L} - \partial_{x_{in}}\mathcal{L}
		\qquad\forall\;j\in J_i,\;\forall\; n\notin J_i\; .
	\end{align*}
	We have $x_{ij}>0$ and $\mu_{ij}=0$, since it is $j\in J_i$. Furthermore, it
	holds that $x_{in}=0$ and $\mu_{in}\geq 0$, since we have $n\notin J_i$. Thus
	we obtain
	\begin{align*}
		0 \leq \mu_{in} = y_{ij} - y_{in} + \tilde{v}\left(h_{in}-h_{ij}\right) -
		\tilde{v}x_{ij}\norm{a_{\cdot i}}_2^2\norm{b_{\cdot j}}_2^2 
	\end{align*}
	and further 
	\begin{align*}
		0 < \tilde{v}x_{ij}\norm{a_{\cdot i}}_2^2\norm{b_{\cdot j}}_2^2 \leq 
		y_{ij} - y_{in} + \tilde{v}\left(h_{in}-h_{ij}\right) \; ,
	\end{align*}
	due to the fact that $\tilde{v},\;x_{ij},\;\norm{a_{\cdot
	i}}_2^2$ and $\norm{b_{\cdot j}}_2^2$ are positive.
	Hence it is left to show that
	\begin{align}
		y_{ij} > y_{in} + \tilde{v}d_{ijn}
		\label{eq:inequ_asymp_1_sparse}
	\end{align}
	holds, where we define $d_{ijn}:=h_{ij}-h_{in}$.
	This statement is obvious for $d_{ijn}\geq 0$.
	Let now $d_{ijn}<0$ hold.\\
	For $n\notin J_i$ we assume that $y_{in} =
	\max\limits_{\nu\in\{1,\ldots,N\}}y_{i\nu}$ holds. In addition let
	be $y_{in}\geq y_{ij} + 2\tilde{v}|d_{ijn}|$ for all $j\in J_i$.
	This is always possible, since we can choose $\tilde{v}>0$ small enough.
	With \eqref{eq:inequ_asymp_1_sparse} we have
	\begin{align*}
		y_{ij} > y_{in} - \tilde{v}|d_{ijn}|
	\end{align*}
	and thus we obtain
	\begin{align*}
		y_{ij} + \tilde{v}|d_{ijn}| > y_{in} \geq y_{ij} + 2\tilde{v}|d_{ijn}|
		\qquad \forall\;j \in J_i \; ,
	\end{align*}
	which is a contradiction.
	Hence we finally obtain that
	\begin{align*}
		y_{ij} > y_{in} \qquad \forall\;j\in J_i,\;\forall\;n\notin J_i
	\end{align*}
	has to hold and we see that the solution $X$ has nonzero entries at the same
	positions like $A^TWB$ has row-maxima, even for $\tilde{v}>0$ but small enough.
	Then obviously the same holds for the solution of
	\eqref{eq:unconstr_prob_for_impl_without_beta}.
\end{proof}

\section{Exact Recovery of Locally 1-Sparse Solutions}
\label{sec:scaling_conditions_for_exact_recovery_of_the_constrained_problem}

In this section we discuss the question of exact recovery for our model.

There already exist several conditions, which provide information about exact
reconstruction using linearly independent subdictionaries, see for instance
\cite{Fuchs2004, Tropp2006}. Unlike the case, where the basis vectors
are linearly independent, we consider the operator to be coherent, i.e. the
\emph{mutual incoherence parameter} (cf. \cite[p. 3]{Juditski2010})
\begin{align}
	\mu(B) := \max\limits_{i\neq j}\frac{\left|\left\langle
	b_i,b_j\right\rangle\right|}{\left\|b_i\right\|_2^2}
\label{eq:coherence_parameter} 
\end{align}
for $b_i$, $b_j$ being distant basis vectors, is large. In other words,
the vectors are very similar. This is a reasonable assumption for many
applications, see for instance the ones mentioned in the introduction as well as
in Subsection \ref{sec:applications}.

In Appendix \ref{sec:exact_reconstruction_of_a_delta_peak_in_1D} we gain some
understanding of necessary scaling conditions recovering locally $1$-sparse
solutions considering only one spacial dimension plus one additional dimension
(such as e.g. time) using problem \eqref{eq:varmodel1}.
We learn that if the solution is $1$-sparse in one spacial dimension plus the
additional dimension, the matrix $B\in\R{T\times N}$ has to meet the scaling
condition
\begin{align}
	\lVert \gamma b_n\rVert_{\ell^2}=1\qquad\text{and}\qquad\left|\left\langle
	\gamma b_n,\gamma b_m\right\rangle\right|\leq 1 \quad\text{for}\quad n\neq
	m
\label{eq:1D_peak_L2_exact_recovery}
\end{align}
with $\gamma\neq 0$ in order to recover 1-sparse
solutions.

\subsection{Lagrange Functional and Optimality Conditions}

In this subsection we introduce the Lagrange functional and optimality
conditions of problem \eqref{eq:Uv_constr_prob}, which we will need in the
further analysis.

We equivalently rewrite problem \eqref{eq:Uv_constr_prob} by writing the data
constraint for every $l$ and $k$, i.e.
\begin{align}
	\min\limits_{U\in G,\;v\in \R{+}} \;
	\beta\sum\limits_{i=1}^M\sum\limits_{j=1}^N u_{ij} + v \quad \text{ s.t. } \quad \alpha\sum\limits_{j=1}^N u_{ij}\leq v, \;
	\sum\limits_{i=1}^M\sum\limits_{j=1}^N a_{li}u_{ij}b_{kj} = w_{lk}
	\label{eq:Uv_constr_prob_new_data_term}
\end{align}
with $l\in\{1,\ldots,L\}$ and $k\in\{1,\ldots,T\}$.
For this problem the Lagrange functional reads as follows:
\begin{align}
	\begin{split}
		\mathcal{L}(v,u_{ij};\lambda,\mu,\eta) = \;
		&\beta\sum\limits_{i=1}^M\sum\limits_{j=1}^N u_{ij} + v +
		\sum\limits_{i=1}^M\lambda_i\left(\alpha\sum\limits_{j=1}^N u_{ij}-v\right) -
		\sum\limits_{i=1}^M\sum\limits_{j=1}^N\mu_{ij}u_{ij} \\ 
	 	&+
	 	\sum\limits_{l=1}^L\sum\limits_{k=1}^T\eta_{lk}\left(w_{lk} -
	 	\sum\limits_{i=1}^{M}\sum\limits_{j=1}^{N}a_{li}u_{ij}b_{kj}\right) \; ,
	\end{split}
	\label{eq:Uv_constr_prob_new_data_term_lagrange}
\end{align}
where $\lambda$, $\mu$ and $\eta$ are Lagrange parameters.
Now we are able to state the optimality conditions
\begin{align}
	0 &= \partial_v\mathcal{L}\;\; = 1-\sum\limits_{i=1}^M\lambda_i \tag{OPT1}
	\label{eq:opt_cond_lagrange1_constr}\; ,\\ 
	0 &= \partial_{u_{ij}}\mathcal{L} = \beta + \alpha\lambda_i - \mu_{ij} -
	\sum\limits_{l=1}^L\sum\limits_{k=1}^T\eta_{lk}a_{li}b_{kj}\;
	,
	\tag{OPT2}\label{eq:opt_cond_lagrange2_constr}
\end{align}
with the complementary conditions (cf. \cite[p. 305-306, Theorem
2.1.4]{Hiriart-Urruty1993})
\begin {align}
	\lambda_i\geq 0\quad &\text{ and }\quad
	\lambda_i\left(v-\alpha\sum\limits_{j=1}^N u_{ij}\right)=0 \; , \label{eq:KKT1}
	\\
	\mu_{ij}\geq 0\quad  &\text{ and }\quad \mu_{ij}u_{ij}=0 \; . \label{eq:KKT2} 
\end{align}

\subsection{Scaling Conditions for Exact Recovery of the Constrained
Problem}
\label{scaling_conditions_for_exact_recovery_of_the_constrained_problem}

On the basis of this insight, we examine under which assumptions a 1-sparse
solution of the constrained $\ell^{0,\infty}$-problem can be reconstructed
exactly by using the constrained $\ell^{1,\infty}$-$\ell^{1,1}$-minimization
\eqref{eq:Uv_constr_prob_new_data_term}.

We will see that the scaling condition \eqref{eq:1D_peak_L2_exact_recovery} in a
slightly reformulated way is a sufficient condition for exact recovery.

\begin{thm}[{Recovery of Locally 1-Sparse Data}]
\label{th:recovery_of_1-sparse_data}
	~\\
	Let be $c_i\in\R{+}$ and let
	\begin{align*}
		\hat{u}_{ij} = \begin{cases} c_i & \text{if } \; j=J(i)\, , \\ 
									 0   & \text{if } \; j\neq J(i)\, , \end{cases}
	\end{align*}
	be the exact solution of the constrained non-negative $\ell^{0,\infty}$-problem
	\begin{align}
		\min\limits_{U\in G} 
		\left(\max\limits_{i\in\left\{1,\ldots,M\right\}}\sum\limits_{j=1}^N
		u_{ij}^0\right) \st AUB^T=W \; ,
	\label{eq:constr_nonneg_ell0inf_prob}
	\end{align}
	with the definition $0^0:=0$. Here
	$J\!:\!\left\{1,...,M\right\}\longrightarrow\left\{1,...,N\right\}$ with
	$i\longmapsto J(i)$ denotes the function that maps every index
	$i\in\{1,\ldots,M\}$ to the index $J(i)\in\{1,\ldots,N\}$ of the corresponding
	basis vector, where the coefficient $\hat{u}_{iJ(i)}$ is unequal to zero, i.e.
	the rows of $\widehat{U}$ shall be 1-sparse and shall have their nonzero entry
	at the index $J(i)$.
	Let $A^T$ be surjective and let the scaling condition
	\begin{align}
		\left\|b_{J(i)}\right\|_2 = 1 \quad\text{and}\quad
		\left|\left\langle b_{J(i)},b_j\right\rangle\right| \leq
		1 \quad \forall \; j\in\left\{1,\ldots,N\right\}
	\label{eq:scaling_condition}
	\end{align}
	hold for all $i\in\left\{1,\ldots,M\right\}$.
	Then $\left(\widehat{U},\;\alpha \max\limits_{p\in\{1,\ldots,M\}} c_p\right)$ is a solution of
	\eqref{eq:Uv_constr_prob_new_data_term}.
\end{thm}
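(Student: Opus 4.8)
The plan is to exploit the convexity of \eqref{eq:Uv_constr_prob_new_data_term}: since the objective is linear and all constraints (the inequalities and the data equalities) are linear, the optimality conditions \eqref{eq:opt_cond_lagrange1_constr}, \eqref{eq:opt_cond_lagrange2_constr} together with the complementarity conditions \eqref{eq:KKT1}, \eqref{eq:KKT2} are not merely necessary but sufficient for global optimality. Hence it suffices to exhibit Lagrange multipliers $\lambda\in\R{M}$, $\mu\in\R{M\times N}$ and $\eta\in\R{L\times T}$ for which all four conditions hold at the candidate point $(\widehat{U},\hat{v})$ with $\hat{v}=\alpha\max_{p}c_p$. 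First I would record feasibility: $\widehat{U}\in G$ and $\hat{v}\geq 0$ are immediate, $A\widehat{U}B^T=W$ holds by assumption, and $\alpha\sum_{j}\hat{u}_{ij}=\alpha c_i\leq\alpha\max_p c_p=\hat{v}$ for every $i$.

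The heart of the argument is the design of a dual certificate. Writing $P_{ij}:=\sum_{l,k}\eta_{lk}a_{li}b_{kj}=(A^T\eta B)_{ij}$, condition \eqref{eq:opt_cond_lagrange2_constr} reads $\mu_{ij}=\beta+\alpha\lambda_i-P_{ij}$, so I must build a matrix of the form $P=A^T\eta B$ whose $i$th row is largest exactly at column $J(i)$. Setting $m_i:=(A^T\eta)_{i\bullet}\in\R{T}$ one has $P_{ij}=\scalpr{m_i,b_j}$, and the natural choice is $m_i:=(\beta+\alpha\lambda_i)\,b_{J(i)}$. The scaling condition \eqref{eq:scaling_condition} then yields $P_{iJ(i)}=(\beta+\alpha\lambda_i)\norm{b_{J(i)}}_2^2=\beta+\alpha\lambda_i$ and $P_{ij}\leq(\beta+\alpha\lambda_i)\left|\scalpr{b_{J(i)},b_j}\right|\leq\beta+\alpha\lambda_i$ for every $j$, where I use $\beta+\alpha\lambda_i\geq 0$ (guaranteed since $\alpha,\beta\geq 0$ and $\lambda_i\geq 0$). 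Defining $\mu_{ij}:=\beta+\alpha\lambda_i-P_{ij}\geq 0$ makes \eqref{eq:opt_cond_lagrange2_constr} hold identically, while $\mu_{iJ(i)}=0$ secures the complementarity \eqref{eq:KKT2} at the nonzero entry (for $j\neq J(i)$ one has $\hat{u}_{ij}=0$ anyway).

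For the remaining multiplier I let $I:=\{i\,|\,c_i=\max_p c_p\}$ be the set of rows attaining the maximal column sum and pick any $\lambda$ supported on $I$ with $\lambda_i\geq 0$ and $\sum_i\lambda_i=1$ (for instance $\lambda$ concentrated at a single index of $I$); this gives \eqref{eq:opt_cond_lagrange1_constr}, and \eqref{eq:KKT1} holds because $\hat{v}-\alpha\sum_j\hat{u}_{ij}=0$ precisely on $I$ while $\lambda_i=0$ off $I$. What is then left is to realize the prescribed matrix $M_\eta\in\R{M\times T}$ whose $i$th row is $m_i$ as $M_\eta=A^T\eta$ for some $\eta$; this is exactly where the surjectivity of $A^T$ is used, since it allows solving $A^T\eta=M_\eta$ column by column. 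I expect this realization step — together with the recognition that the peak structure $m_i\propto b_{J(i)}$ is precisely what \eqref{eq:scaling_condition} needs to force $\mu_{ij}\geq 0$ — to be the main obstacle; once $\eta$, $\lambda$ and $\mu$ are in hand, all optimality conditions are verified and convexity delivers that $(\widehat{U},\hat{v})$ solves \eqref{eq:Uv_constr_prob_new_data_term}.
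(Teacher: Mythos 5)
Your proposal is correct and follows essentially the same route as the paper's proof: both exhibit the identical dual certificate, namely $A^T\eta$ with rows $(\beta+\alpha\lambda_i)\,b_{J(i)}$ (solvable by surjectivity of $A^T$), the multipliers $\mu_{ij}=(\beta+\alpha\lambda_i)\bigl(1-\langle b_{J(i)},b_j\rangle\bigr)$, and $\lambda$ supported on the rows attaining $\max_p c_p$, then verify the KKT system. Your only deviations are cosmetic: you make the sufficiency of KKT for this convex problem and the feasibility check explicit, and you allow any probability weights on the argmax set where the paper fixes the uniform choice $\lambda_i=1/m$.
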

\begin{proof}
	~\\
	In order to proof Theorem \ref{th:recovery_of_1-sparse_data}, we have to show
	that there exist Lagrange parameters $\lambda\in\R{M}$, $\mu\in\R{M\times N}$
	and $\eta\in\R{L\times T}$ such that $\widehat{U}$ fulfills the optimality
	conditions \eqref{eq:opt_cond_lagrange1_constr} and
	\eqref{eq:opt_cond_lagrange2_constr} with respect to the complimentary
	conditions \eqref{eq:KKT1} and \eqref{eq:KKT2}.\\
	We choose the Lagrange parameters for all $i\in\{1,\ldots,M\}$ as follows:
	\begin{align*}
		\lambda_i = 
		\begin{cases} 
			\frac{1}{m} & \text{if } \; c_i = \frac{v}{\alpha}\, ,\\
					  0 & \text{if } \; c_i < \frac{v}{\alpha}\, ,
		\end{cases} 
	\end{align*}
	with $\frac{v}{\alpha}=\max\limits_{p\in\{1,\ldots,M\}} c_p$ and $m$ being the number of indices,
	for which holds $c_i=\frac{v}{\alpha}$,
	\begin{align*}
		\mu_{ij} = 
		\begin{cases}
			0 & \text{if } \; j=J(i)\,  ,\\
			\left(\alpha\lambda_i+\beta\right)\left(1-\sum\limits_{k=1}^T
			b_{kJ(i)}b_{kj}\right) & \text{if } \; j\neq J(i)\, ,
		\end{cases}
	\end{align*}
	and $\eta$ as solution of
	\begin{align}
		\sum\limits_{l=1}^L a_{li}\eta_{lk} = \left(\alpha\lambda_i + \beta\right)
		b_{kJ(i)} \qquad\forall\; i\in\left\{1,\ldots,M\right\}, \;\forall\;
		k\in\left\{1,\ldots,T\right\} \; .
	\label{eq:lagrange_param_assumption}
	\end{align}
	Note that \eqref{eq:lagrange_param_assumption} is solvable, since $A^T$ is
	surjective.
	\begin{enumerate}
	  \item Let us show that \eqref{eq:opt_cond_lagrange1_constr} and
	  	\eqref{eq:KKT1} hold for $\widehat{U}$:
  		\begin{itemize}
  		  \item[a)] Obviously we have
  		  	\begin{align*}
  		  		\sum\limits_{i=1}^M\lambda_i = \underset{\left.
				c_i=\frac{v}{\alpha}\right\}}{\sum\limits_{i\in\left\{1,\ldots,M\right|}}\frac{1}{m}
				= m\frac{1}{m} = 1 \; .
  		  	\end{align*}
			Thus \eqref{eq:opt_cond_lagrange1_constr} is fulfilled.
  		  \item[b)] In case that $c_i<\frac{v}{\alpha}$ holds, we see that
  		  \eqref{eq:KKT1} is trivially fulfilled. Hence let be $c_i =
  		  \frac{v}{\alpha}$. We consider
	  		\begin{align*}
	  			\lambda_i\left(v-\alpha\sum_{j=1}^N\hat{u}_{ij}\right) =
	  			\frac{1}{m}(v-\alpha c_i) = \frac{1}{m}(v-\alpha\frac{v}{\alpha}) = 0
	  		\end{align*}
	  		and observe that \eqref{eq:KKT1} is fulfilled as well.
  		\end{itemize}
	  \item Let us now show that \eqref{eq:opt_cond_lagrange2_constr} and
	  	\eqref{eq:KKT2} hold for $\widehat{U}$:
	  	\begin{itemize}
	  	   \item[a)] In case that $j=J(i)$ holds, we obtain $\hat{u}_{ij}=c_i$ and
			 $\mu_{iJ(i)} = 0$. Thus \eqref{eq:KKT2} is obviously fulfilled.
			 The other case, i.e. $j\neq J(i)$, yields $\hat{u}_{ij}=0$ and $\mu_{ij} =
			 \left(\alpha\lambda_i+\beta\right)\left(1-\sum\limits_{k=1}^T b_{kJ(i)}b_{kj}\right)$. Since \eqref{eq:scaling_condition} has to hold, we
			obtain $\mu_{ij}\geq 0$ and we observe that in this case \eqref{eq:KKT2}
			is fulfilled as well.
		  \item[b)] Let again be $j=J(i)$. Then we obtain
		  	\begin{align*}
		  		\alpha\lambda_i + \beta -
		  		\sum\limits_{l=1}^L\sum\limits_{k=1}^T\eta_{lk}a_{li}b_{kJ(i)}
		  		= \left(1- \sum\limits_{k=1}^T b^2_{kJ(i)}\right)\left(\alpha\lambda_i +
		  		\beta\right) = 0
		  	\end{align*}
		  	by using the definitions of $\eta$ and $\mu$ and the scaling condition
		  	\eqref{eq:scaling_condition}. In this case
		  	\eqref{eq:opt_cond_lagrange2_constr} is fulfilled.\\
		  	Let us now consider $j\neq J(i)$. Then we have
		  	\begin{align*}
		  		&\alpha\lambda_i + \beta -
		  		\sum\limits_{l=1}^L\sum\limits_{k=1}^T\eta_{lk}a_{li}b_{kJ(i)} - \mu_{ij}
		  		\\ =\; &\alpha\lambda_i + \beta -
		  		\sum\limits_{l=1}^L\sum\limits_{k=1}^T\eta_{lk}a_{li}b_{kJ(i)} - 
		  		\left(\alpha\lambda_i+\beta\right)\left(1-\sum\limits_{k=1}^T
				b_{kJ(i)}b_{kj}\right) \\
				=\; &0 \; ,
		  	\end{align*}
		  	where we use the definition of $\mu$. Thus we see that in this case
		  	\eqref{eq:opt_cond_lagrange2_constr} is fulfilled as well.
	  	\end{itemize}
	\end{enumerate}
	In summary, we see that there exist Lagrange parameters such that $\widehat{U}$
	fulfills the optimality conditions and complementary conditions of
	\eqref{eq:Uv_constr_prob_new_data_term}. Thus we obtain the assertion.
\end{proof}
All in all, we found a condition for exact recovery of solutions of the
constrained $\ell^{0,\infty}$-problem, which contain 1-sparse rows, using the
constrained problem \eqref{eq:Uv_constr_prob_new_data_term} for the
reconstruction, i.e. \eqref{eq:scaling_condition} has to hold.
\begin{rem}
~\\
	We need to assume that $A^T$ is surjective in order to solve
	\eqref{eq:lagrange_param_assumption}. Unfortunately, if $A^T$ is surjective,
	then $A$ is injective and thus we could easier consider $UB^T = A^\dagger W$,
	where $A^\dagger$ is the pseudoinverse of $A$.
\end{rem}
Let us now consider an example of the extremest under-determined case, i.e.
where we have $L=1$.
\begin{thm}
\label{th:ex_rec_not_possible}
~\\
	Let be $\beta=0$ and $A\in\R{1\times M}$ with $M>1$ and $a_i\neq 0$ for
	every $i\in\{1,\ldots,M\}$. Let
	\begin{align*}
		\hat{u}_{ij} = \begin{cases} c_i & \text{if } \; j=J(i)\, , \\ 
									 0   & \text{if } \; j\neq J(i)\, , \end{cases}
	\end{align*}
	be the exact solution of the nonnegative $\ell^{0,\infty}$-problem
	\eqref{eq:constr_nonneg_ell0inf_prob} with
	$J\!:\!\left\{1,...,M\right\}\longrightarrow\left\{1,...,N\right\}$,
	$i\longmapsto J(i)$ mapping again every index $i$ to the index of the
	corresponding basis vector, where the coefficient is unequal to zero.
	Furthermore, let $m\in\{1,\ldots,M\}$ be a row-index, where $\widehat{U}$
	reaches its maximum, i.e. we have $c_m = \max\limits_{p\in\{1,\ldots,M\}} c_p =
	\frac{v}{\alpha}$.\\
	In case the exact solution $\widehat{U}$ contains a row-vector $u_i$, which
	has its nonzero entry at the same position as $u_m$, i.e. $J(i) = J(m)$, but
	their entries differ, i.e. $c_i < c_m$, then exact recovery of $\widehat{U}$
	using the nonnegative $\ell^{1,\infty}$-problem
	\eqref{eq:Uv_constr_prob_new_data_term} for the reconstruction is not possible.
\end{thm}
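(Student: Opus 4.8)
The plan is to show that the $\ell^{1,\infty}$-minimization cannot single out $\widehat U$ by exhibiting a feasible competitor that is either strictly better or indistinguishable in objective value. The key structural observation is that, with $\beta=0$ and $L=1$, problem \eqref{eq:Uv_constr_prob_new_data_term} amounts to minimizing $\max_i\sum_j u_{ij}$ (the optimal $v$ being $\alpha$ times this maximum) subject to the single data constraint $\sum_{i=1}^M\sum_{j=1}^N a_i u_{ij} b_{kj}=w_k$. This constraint depends on $U$ \emph{only} through the weighted column sums $s_j:=\sum_{i=1}^M a_i u_{ij}$, because it rewrites as $\sum_{j=1}^N s_j b_{kj}=w_k$. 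Hence any $U\ge 0$ whose weighted column sums agree with those of $\widehat U$ is automatically feasible, and $\widehat U$ itself realizes the objective value $c_m=\|\widehat U\|_{1,\infty}$.

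Next I would exploit this freedom inside a single column. Writing $j^\ast:=J(m)=J(i)$ and using $c_i>0$, I leave $\widehat U$ unchanged except in the two entries of column $j^\ast$ in rows $m$ and $i$, which I perturb so as to preserve $s_{j^\ast}$: setting $\tilde u_{m j^\ast}=c_m-\epsilon$ forces $\tilde u_{i j^\ast}=c_i+\tfrac{a_m}{a_i}\epsilon$. For all sufficiently small $\epsilon>0$ the resulting $\tilde U$ stays nonnegative and therefore feasible. The row sum of $m$ then drops strictly below $c_m$, while the row sum of $i$ shifts by $\tfrac{a_m}{a_i}\epsilon$ and, since $c_i<c_m$, remains strictly below $c_m$; every other row is untouched.

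The conclusion then splits according to the multiplicity of the maximum. If $m$ is the \emph{only} row attaining the maximal row sum $c_m$, the new global maximum $\max_i\sum_j\tilde u_{ij}$ is strictly smaller than $c_m$, so $(\widehat U,\alpha c_m)$ cannot be a minimizer of \eqref{eq:Uv_constr_prob_new_data_term}. If several rows attain $c_m$, the untouched maximal rows keep the global maximum at $c_m$, so $\tilde U$ is a feasible point with the \emph{same} objective value yet $\tilde U\neq\widehat U$; thus $\widehat U$ is not the unique minimizer. In both cases minimizing the nonnegative $\ell^{1,\infty}$-functional fails to return $\widehat U$ uniquely, which is precisely the meaning of non-recovery. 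This should be contrasted with Theorem \ref{th:recovery_of_1-sparse_data}, where surjectivity of $A^T$ was needed to solve \eqref{eq:lagrange_param_assumption}; for $A\in\R{1\times M}$ with $M>1$ that surjectivity is lost, and it is exactly this loss that opens up the column redistribution above.

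The step I expect to be the main obstacle is the sign bookkeeping of the ratio $a_m/a_i$ against the nonnegativity constraint. When $a_m/a_i<0$ the compensation decreases $\tilde u_{i j^\ast}$, so one must keep $\epsilon\le c_i\,|a_i/a_m|$ to remain in $G$, which is where $c_i>0$ is genuinely used; when $a_m/a_i>0$ the row-$i$ sum grows, so $\epsilon$ must be small enough that it does not overtake $c_m$, which is where $c_i<c_m$ enters. Both are harmless smallness conditions, but they are the precise points at which the hypotheses $a_i\neq 0$, $c_i>0$, and $c_i<c_m$ are consumed; verifying them carefully is the only real work, the rest being the structural reduction to weighted column sums.
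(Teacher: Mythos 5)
Your proof is correct, and it takes a genuinely different route from the paper's. The paper argues \emph{dually}: it assumes $\widehat U$ solves \eqref{eq:Uv_constr_prob_new_data_term} and derives a contradiction in the KKT system --- complementarity \eqref{eq:KKT1} forces $\lambda_i=0$ for every non-maximal row, and stationarity \eqref{eq:opt_cond_lagrange2_constr} evaluated in the shared column $j=J(i)=J(m)$ (where $\mu_{iJ(i)}=\mu_{mJ(m)}=0$ by \eqref{eq:KKT2}, both entries being positive) gives $a_m\lambda_i=a_i\lambda_m$, hence $\lambda_m=0$, contradicting \eqref{eq:opt_cond_lagrange1_constr}. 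You argue \emph{primally}: since $L=1$, feasibility constrains $U$ only through the weighted column sums $s_j=\sum_i a_iu_{ij}$, so mass can be shifted inside column $J(m)$ from row $m$ to row $i$, yielding an explicit feasible competitor. When $m$ is the \emph{unique} maximal row the two arguments prove the same strong statement ($\widehat U$ is not a minimizer at all): in the paper $\lambda_m=1>0$ is then forced, and in your construction the competitor is strictly better. Where they part ways is the multi-max case, and there your version is actually the more careful one. The paper deduces ``$\lambda_m>0$ for every maximal $m$'' from $\sum_m\lambda_m=1$, which does not follow (a sum of nonnegative numbers equal to one makes only \emph{some} term positive, and the contradiction needs positivity for the specific $m$ paired with $i$). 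Indeed the strong conclusion can genuinely fail there: take $a=(-1,1,-1)$, $B=I_2$, and $\widehat U$ with rows $(2,0)$, $(0,2)$, $(1,0)$, so that $W=A\widehat UB^T$ encodes $-u_{11}+u_{21}-u_{31}=-3$ and $-u_{12}+u_{22}-u_{32}=2$; every feasible $U\in G$ then satisfies $u_{22}=2+u_{12}+u_{32}\geq 2$, so $\|U\|_{1,\infty}\geq 2=\|\widehat U\|_{1,\infty}$ and $\widehat U$ \emph{is} a minimizer, even though row $3$ shares its support column with the maximal row $1$ and $c_3<c_1$ --- only uniqueness fails, exactly as your case distinction predicts (your perturbed matrix with rows $(2-\epsilon,0)$, $(0,2)$, $(1+\epsilon,0)$ is another minimizer). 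So your redistribution argument proves the theorem under the standard reading that exact recovery requires $\widehat U$ to be the \emph{unique} solution of the convex problem, and it does so in precisely the case that the paper's dual argument mishandles; the price is that it yields the weaker non-uniqueness conclusion there, but the stronger conclusion is unavailable anyway.
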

\begin{proof}
~\\
	Let us suppose exact recovery were possible. Then there exist a
	$\lambda$, which fulfills \eqref{eq:opt_cond_lagrange1_constr} and \eqref{eq:KKT1}, a
	$\mu$, which fulfills \eqref{eq:KKT2} and an $\eta$ such that
	\eqref{eq:opt_cond_lagrange2_constr} is fulfilled.\\
	By considering the complementary condition \eqref{eq:KKT1} for $i
	\in\left\{1,\ldots,M \;\Big|\; c_i<\max\limits_{p\in\{1,\ldots,M\}}
	c_p\right\}$, we have
	\begin{align*}
		0 = \lambda_i\left(v-\alpha\sum\limits_{j=1}^N u_{ij}\right) 
		= \lambda_i\left(v-\alpha u_{iJ(i)}\right) = \lambda_i\underbrace{(v-\alpha
		c_i)}_{\neq 0} \; ,
	\end{align*}
	since it is $c_i<\frac{v}{\alpha}$. Thus $\lambda_i=0$ holds for every
	$i\in\left\{1,\ldots,M \;\Big|\; c_i<\max\limits_{p\in\{1,\ldots,M\}}
	c_p\right\}$.
	On the other hand with \eqref{eq:opt_cond_lagrange1_constr} we have
	\begin{align*}
		1 = \sum_{i=1}^M \lambda_i =
		\underset{\left. c_m=\max_p
		c_p\right\}}{\sum_{m\in\left\{1,\ldots,M|\right.}} \lambda_m \; ,
	\end{align*}
	which yields $\lambda_m>0$ for every $m\in\left\{1,\ldots,M \;\Big|\; c_m =
	\max\limits_{p\in\{1,\ldots,M\}} c_p\right\}$.\\
	Now let us consider \eqref{eq:opt_cond_lagrange2_constr} for $j=J(i)$ and
	$j=J(m)$, which then reads as follows:
	\begin{align*}
		\sum\limits_{k=1}^T\eta_{k}b_{kJ(m)} &= \alpha\frac{\lambda_i}{a_{i}}
		\qquad\text{and} \\
		\sum\limits_{k=1}^T\eta_{k}b_{kJ(m)} &= \alpha\frac{\lambda_m}{a_{m}} \; ,
	\end{align*}
	since we have $J(i) = J(m)$.
	Therefore, we obtain
	\begin{align*}
		a_m\lambda_i = a_i\lambda_m \; .
	\end{align*}
	This is a contradiction, since we have $\lambda_i = 0$, $\lambda_m>0$ and
	$a_i$ and $a_m$ are unequal to zero.\\
	Thus we observe that the 1-sparse $\ell^{0,\infty}$-solution $\hat{u}_{ij}$
	cannot be the solution of the $\ell^{1,\infty}$-problem
	\eqref{eq:Uv_constr_prob_new_data_term} and in this case exact recovery is not
	possible.
\end{proof}
\begin{rem}
~\\
	In the case of Theorem \ref{th:ex_rec_not_possible} there always exists a
	solution of \eqref{eq:Uv_constr_prob_new_data_term} and
	\eqref{eq:constr_nonneg_ell0inf_prob}, which has a nonzero element in just one
	row, i.e. $\widehat{U}$ itself is 1-sparse.
\end{rem}
Note that Theorem \ref{th:ex_rec_not_possible} does not state that the
reconstructed support is wrong. Hence we could still obtain important information from the
nonnegative $\ell^{1,\infty}$-reconstruction.\\
Furthermore, Theorem \ref{th:ex_rec_not_possible} does not apply for the case
where we have $\beta>0$, since in this case we obtain
$$ (\alpha\lambda_i + \beta)a_m = (\alpha\lambda_m + \beta)a_i $$ 
and thus we do not obtain a contradiction in the last step of the proof.
Thus Theorem \ref{th:ex_rec_not_possible} suggests the usage of $\beta>0$.

\section{Algorithms}
\label{sec:algorithms}

The numerical minimization of $\ell^{p,q}$-related regularization problems is
usually done by using a modified FOCUSS algorithm (cf. \cite{Rao1999,
Cotter2005}) if the problem includes an exact reconstruction, or in the noisy
case a thresholded Landweber iteration (both realizations may be found
in \cite{Kowalski2009}).
However, those algorithms are designed for $0<p,q\leq 2$ and since in our case
we have $q=\infty$, they are not suitable for minimizing
$\ell^{1,\infty}$-related problems.

In order to solve problem \eqref{eq:varmodel1} numerically, we use a different
approach and develop an algorithm for the solution of its reformulated
problem, i. e.
\begin{align}
	\min\limits_U \; \frac{1}{2}\left\|AUB^T - W\right\|_F^2 +
	\beta\sum\limits_{i=1}^M\sum\limits_{j=1}^N u_{ij} \;\text{ s. t. }\;
	\sum\limits_{j=1}^N u_{ij}\leq \tilde{v},\; \forall \; i, \; u_{ij}\geq 0 \quad
	\forall\; i,j \; .
	\tag{\ref{eq:prob_for_algorithm}}
\end{align}
For the sake of simplicity we exclude the weight $\gamma$ for now.

For the numerical solution of this problem, we use the alternate direction
method of multipliers (ADMM), which traces back to the works of Glowinski and
Tallec \cite{Glowinski1975} and Gabay and Mercier \cite{Gabay1976}.
It was furthermore subject of many other books and papers, including
\cite{Fortin1983a}, especially its chapters \cite{Fortin1983b} and
\cite{Gabay1983}, as well as \cite{Glowinski1987}, \cite{Tseng1991},
\cite{Fukushima1992}, \cite{Eckstein1993} and \cite{Chen1994}.
For the computation of reasonably simple sub-steps, we
split the problem twice.
In so doing we obtain
\begin{align*}
	\min\limits_{U,\; Z,\; D} \;\frac{1}{2}\left\|AZ - W\right\|_F^2 +
	\beta\sum\limits_{i=1}^M\sum\limits_{j=1}^N d_{ij} \; \text{ s. t. } \;
	&\sum\limits_{j=1}^N d_{ij}\leq \tilde{v},\; \forall i,\; d_{ij}\geq 0 \;
	\forall i,j \\
	& Z = UB^T, \; D = U\; .
\end{align*}
By using the Lagrange functional
\begin{align*}
 	\mathcal{L}\left(U,D,Z;\widetilde{P},\widetilde{Q} \right) =
 	\;&\frac{1}{2}\left\|AZ-W\right\|_F^2 +
 	\beta\sum\limits_{i=1}^M\sum\limits_{j=1}^N d_{ij} +
 	\left\langle\widetilde{P},U-D\right\rangle_F +
 	\left\langle\widetilde{Q},UB^T-Z\right\rangle_F \\
 	&\text{s.t.}\quad
 	\sum\limits_{j=1}^N d_{ij}\leq \tilde{v},\; \forall i,\; d_{ij}\geq 0
 	\;\forall i,j \; ,
\end{align*}
where $\widetilde{P}$ and $\widetilde{Q}$ are the dual variables, we
obtain the \emph{unscaled} augmented Lagrangian
\begin{align}
	\begin{split}
		\mathcal{L}_{un}^{\lambda,\mu}\left(U,D,Z;\widetilde{P},\widetilde{Q} \right) =
		\;&\frac{1}{2}\left\|AZ-W\right\|_F^2 +
	 	\beta\sum\limits_{i=1}^M\sum\limits_{j=1}^N d_{ij} +
	 	\left\langle\widetilde{P},U-D\right\rangle_F \\
	 	&+ \frac{\lambda}{2}\left\|U-D\right\|_F^2 
	 	+ \left\langle\widetilde{Q},UB^T-Z\right\rangle_F +
	 	\frac{\mu}{2}\left\|UB^T-Z\right\|_F^2 \\
	 	\quad &\text{s.t.}\quad \sum\limits_{j=1}^N d_{ij}\leq \tilde{v},\;
	 	\forall i,\; d_{ij}\geq 0 \; \forall i,j \; ,
	\end{split}
	\label{eq:unsc_aug_lagrange}
\end{align}
with Lagrange parameters $\lambda$, $\mu$. Since its handling is much easier we also
want to state the \emph{scaled} augmented Lagrangian, i.e.
\begin{align*}
	\mathcal{L}_{sc}^{\lambda,\mu}(U,D,Z;P,Q) = \;
	&\frac{1}{2}\left\|AZ - W\right\|_F^2 + \beta\sum\limits_{i=1}^M\sum\limits_{j=1}^N d_{ij} + \frac{\lambda}{2}\left\| U - D
	+ P\right\|_F^2 \\
	&+ \frac{\mu}{2}\left\| UB^T - Z + Q \right\|_F^2 \;\text{ s.
	t. }\; \sum\limits_{j=1}^N d_{ij}\leq \tilde{v},\;\forall i,\; d_{ij}\geq 0\;
	\forall i,j \; ,
\end{align*}
with the new scaled dual variables
$P:=\frac{\widetilde{P}}{\lambda}$ and $Q:=\frac{\widetilde{Q}}{\mu}$.
By using ADMM the algorithm reads as follows:
\begin{align*}
	U^{k+1} &= \argmin{U}{\mathcal{L}_{sc}^{\lambda,\mu}(U,Z^k,D^k;P^k,Q^k)}\\
	D^{k+1} &= \argmin{D}{\mathcal{L}_{sc}^{\lambda,\mu}(U^{k+1},Z^k,D;P^k,Q^k)}\\
	Z^{k+1} &=
	\argmin{Z}{\mathcal{L}_{sc}^{\lambda,\mu}(U^{k+1},Z,D^{k+1};P^k,Q^k)}\\
	P^{k+1} &= P^k - \left(D^{k+1} - U^{k+1}\right)\\
	Q^{k+1} &= Q^k - \left(Z^{k+1} - U^{k+1}B^T\right)
\end{align*}
For faster convergence we use a standard extension of ADMM in Subsection
\ref{adaptive_parameter_choice}, i.e. an adaptive parameter choice as proposed
in \cite[Subsection 3.4.1]{Boyd2010} with its derivation in \cite[Section
3.3]{Boyd2010}, which we will adapt to our problem.
Another advantage of this extension is that the performance becomes less
dependent on the initial choice of the penalty parameter.
In order to do so, we first propose the optimality conditions.

\subsection{Optimality Conditions}

We obtain the following primal feasibility conditions
 \begin{align}
	0 &= \partial_{P}\mathcal{L} = \lambda\left(U-D\right)
	\label{eq:primal_feas_cond1} \\
	0 &= \partial_{Q}\mathcal{L} = \mu\left(UB^T-Z\right)
	\label{eq:primal_feas_cond2}
\end{align}
and the dual feasibility conditions
\begin{align}
	0 &= \partial_{U}\mathcal{L} = \lambda P + \mu QB \label{eq:dual_feas_cond1} \\ 
	0 &\in \partial_{D}\mathcal{L} = \beta\mathds{1}_{m\times n} - \lambda P +
	\partial J(D) \label{eq:dual_feas_cond2} \\
	0 &= \partial_{Z}\mathcal{L} = A^T\left(AZ-W \right) - \mu Q
	\label{eq:dual_feas_cond3}
\end{align}
with
\begin{align}
	J(D) := \begin{cases}
		0 & \text{ if } \sum\limits_{j=1}^N d_{ij}\leq\tilde{v} \;\forall
		i,\;d_{ij}\geq 0\;\forall i,j \; ,\\
		\infty & \text{ else.}
	\end{cases}
	\label{eq:positive_ell1infty}
\end{align}
Since $U^{k+1}$ minimizes $\mathcal{L}^{\lambda,\mu}_{sc}(U,Z^k,D^k;P^k,Q^k)$ by
definition, we obtain
\begin{align*}
	0 \in \partial_U\mathcal{L}^{\lambda,\mu}_{sc} &=
	\lambda\left(U^{k+1}-D^k+P^k\right) + \mu\left(U^{k+1}B^T-Z^k+Q^k\right)B \\
	&= \lambda P^{k+1} + \lambda\left(D^{k+1}-D^k\right) + \mu Q^{k+1}B +
	\mu\left(Z^{k+1}-Z^k\right)B \; ,
\end{align*}
by using the definitions of $P^{k+1}$ and $Q^{k+1}$. This is equivalent to
\begin{align*}
	\lambda\left(D^k-D^{k+1}\right) + \mu\left(Z^k-Z^{k+1}\right)B \in \lambda
	P^{k+1} + \mu Q^{k+1}B \; ,
\end{align*}
where the right hand side is the first dual feasibility condition
\eqref{eq:dual_feas_cond1}. Therefore 
\begin{align}
	S^{k+1} = \lambda\left(D^k-D^{k+1}\right) + \mu\left(Z^k-Z^{k+1}\right)B
	\label{eq:dual_res}
\end{align}
can be seen as a dual residual for \eqref{eq:dual_feas_cond1}.
Analogically we consider $0 \in \partial_D\mathcal{L}^{\lambda,\mu}_{sc}$,
which yields that $P^{k+1}$ and $D^{k+1}$ always satisfy
\eqref{eq:dual_feas_cond2}. The same applies for $0 \in
\partial_Z\mathcal{L}^{\lambda,\mu}_{sc}$, where we see that $Q^{k+1}$ and
$Z^{k+1}$  
always satisfy \eqref{eq:dual_feas_cond3}. In addition we will
refer to
\begin{align}
	R_1^{k+1} &= \lambda\left(D^{k+1} - U^{k+1}\right) \qquad \text{ and }
	\label{eq:primal_res1} \\
	R_2^{k+1} &= \mu\left(Z^{k+1} - U^{k+1}B^T\right) 	\label{eq:primal_res2}
\end{align}
as the primal residuals at iteration $k+1$.

Obviously we obtain five optimality conditions (\ref{eq:primal_feas_cond1} -
\ref{eq:dual_feas_cond3}). We have seen that \eqref{eq:dual_feas_cond2} and
\eqref{eq:dual_feas_cond3} are always satisfied. The other three (\ref{eq:primal_feas_cond1} -
\ref{eq:dual_feas_cond1}) lead to the primal residuals \eqref{eq:primal_res1}
and \eqref{eq:primal_res2} and to the dual residual \eqref{eq:dual_res}, which
converge to zero as ADMM proceeds (cp. \cite[Appendix A, p. 106 et
seqq.]{Boyd2010}).

\subsection{Stopping Criteria}
\label{sec:stopping_criteria}

In analogy to \cite[Section 3.3.1]{Boyd2010} we derive
the stopping criteria for the algorithm. As shown in Appendix \ref{sec:inequality} the primal and dual
residuals can be related to a bound on the objective suboptimality of the current point
$Y^*$. Hence we obtain
\begin{align}
	\begin{split}
		&\frac{1}{2}\left\| AZ-W\right\|_F^2 +
		\beta\sum\limits_{i=1}^M\sum\limits_{j=1}^N d_{ij} + J(D) - Y^* \\
		\leq\; &\left\langle P^k,R_1^k\right\rangle_F + \left\langle
		Q^k,R_2^k\right\rangle_F + \left\langle U^k - U^*,S^k\right\rangle_F \; .
	\end{split}
	\label{eq:inequality}
\end{align}
We see that the residuals should be small in order to obtain small objective
suboptimality. Since we want to obtain a stopping criterion but $U^*$ is
unknown, we estimate that $\lVert U^k-U^*\rVert_F\leq d$ shall hold.
Thus we obtain
\begin{align*}
	&\;\frac{1}{2}\left\| AZ-W\right\|_F^2 +
	\beta\sum\limits_{i=1}^M\sum\limits_{j=1}^N d_{ij} + J(D) - Y^* \\
	\leq &\;\lVert P^k\rVert_F\lVert R_1^k\rVert_F +
	\lVert Q^k\rVert_F\lVert R_2^k\rVert_F + d\lVert S^k\rVert_F \; .
\end{align*}
It stands to reason that the primal and dual residual must be small, i.e.
\begin{align*}
	\lVert R_1^k\rVert_F \leq \varepsilon_1^{\text{pri}} \; , \qquad
	\lVert R_2^k\rVert_F \leq \varepsilon_2^{\text{pri}} \; , \qquad
	\lVert S^k\rVert_F \leq \varepsilon^{\text{dual}}  \; ,
\end{align*}
with tolerances $\varepsilon_{1,2}^{\text{pri}}>0$ and
$\varepsilon^{\text{dual}}>0$ for the feasibility conditions
(\ref{eq:primal_feas_cond1} - \ref{eq:dual_feas_cond1}), respectively.
Boyd et al suggest in \cite{Boyd2010} that those can be chosen via an
absolute and relative criterion, i.e.
\begin{align*}
 	\varepsilon_1^{\text{pri}} &=
 	\sqrt{MN}\;\varepsilon^{\text{abs}} +
 	\varepsilon^{\text{rel}}\max \left\{\lVert U^k\rVert_F,\lVert
 	D^k\rVert_F,0\right\} \; ,\\
	\varepsilon_2^{\text{pri}} &=
	\sqrt{MT}\;\varepsilon^{\text{abs}} +
	\varepsilon^{\text{rel}}\max \left\{\lVert U^kB^T\rVert_F,\lVert
	Z^k\rVert_F,0\right\} \; , \\
	\varepsilon^{\text{dual}} &= \sqrt{MN}\;\varepsilon^{\text{abs}}
	+ \varepsilon^{\text{rel}}\lVert\lambda P^k + \mu Q^k B\rVert_F \; ,
\end{align*}
where $\varepsilon^{\text{rel}} = 10^{-3}$ or $10^{-4}$ is a relative tolerance
and the absolute tolerance $\varepsilon^{\text{abs}}$ depends on the scale of
the typical variable values. Note that the factors $\sqrt{MN}$
and $\sqrt{MT}$ result from the fact that the Frobenius norms are
in $\R{M\times N}$ and $\R{M\times T}$, respectively.

\subsection{Adaptive Parameter Choice}
\label{adaptive_parameter_choice}

In order to extend the standard ADMM and to improve its convergence rate, we
vary the penalty parameters $\lambda^k$ and $\mu^k$ in each iteration as
proposed in \cite[Section 3.4.1]{Boyd2010}. 
This extension has been analyzed in \cite{Rockafellar1976} in the context of the
method of multipliers. There it has been shown that if the penalty parameters go
to infinity, superlinear convergence may be reached.
If we consider $\lambda$ and $\mu$ to become fixed after a finite number of
iterations, the fixed penalty parameter theory still applies, i.e. we obtain
convergence of the ADMM.

The following scheme is amongst others proposed in \cite{He2000},\cite{Wang2001}
and often works well.
\begin{align*}
	\lambda^{k+1} = 
	\begin{cases}
		\tau_1^{\text{incr}}\lambda^k & \text{if } \; \lVert R_1^k\rVert_F >
		\eta_1\lVert S^k\rVert_F , \\
		\frac{\lambda^k}{\tau_1^{\text{decr}}} & \text{if } \; \lVert S^k\rVert_F >
		\eta_1\lVert R_1^k\rVert_F , \\
		\lambda^k & \text{otherwise,}
	\end{cases} 
	\quad\text{and}\quad
	P^{k+1} =
	\begin{cases}
		\frac{P^k}{\tau_1^{\text{incr}}} & \text{if } \; \lVert R_1^k\rVert_F
		> \eta_1\lVert S^k\rVert_F , \\
		P^k\tau_1^{\text{decr}} & \text{if } \; \lVert S^k\rVert_F >
		\eta_1\lVert R_1^k\rVert_F , \\
		P^k & \text{otherwise,}
	\end{cases}
\end{align*}
\begin{align*}
	\mu^{k+1} &:= 
	\begin{cases}
		\tau_2^{\text{incr}}\mu^k & \text{if } \; \lVert R_2^k\rVert_F > \eta_2\lVert
		S^k\rVert_F , \\
		\frac{\mu^k}{\tau_2^{\text{decr}}} & \text{if } \; \lVert S^k\rVert_F >
		\eta_2\lVert R_1^k\rVert_F , \\
		\mu^k & \text{otherwise,}
	\end{cases}
	\quad\text{and}\quad
	Q^{k+1} =
	\begin{cases}
		\frac{Q^k}{\tau_2^{\text{incr}}} & \text{if } \; \lVert R_2^k\rVert_F
		> \eta_2\lVert S^k\rVert_F , \\
		Q^k\tau_2^{\text{decr}} & \text{if } \; \lVert S^k\rVert_F >
		\eta_2\lVert R_2^k\rVert_F , \\
		Q^k & \text{otherwise,}
	\end{cases}
\end{align*}
where $\eta_{1,2}>1$, $\tau_{1,2}^{\text{incr}}>1$,
$\tau_{1,2}^{\text{decr}}>1$.
Typical choices are $\eta_{1,2} = 10$ and $\tau_{1,2}^{\text{incr}} =
\tau_{1,2}^{\text{decr}} = 2$.
Note that the dual variables $P^k$ and $Q^k$ only have to be updated in the
scaled form.

\subsection{Solving the
\texorpdfstring{$\ell^{1,\infty}-\ell^{1,1}$}{l1infinity-l11}-Regularized
Problem}

\begin{algorithm}
\caption{\texorpdfstring{$\ell^{1,\infty}$-$\ell^1$}{l1inf-l1}-regularized
Problem via ADMM with Double Splitting}
\label{alg:L2_ell1_infty_via_ALM}
	\begin{algorithmic}[1]
			\State \textbf{Parameters:}
			$v>0,\;\beta>0,\;A\in\mathbb{R}^{L\times M},\;B\in\mathbb{R}^{T\times
			N},W\in\mathbb{R}^{L\times
			T},\;\eta_{1,2}>1,\;\tau_{1,2}^{\text{incr}}>1,\;\tau_{1,2}^{\text{decr}}>1,\;\varepsilon^{\text{rel}}
			= 10^{-3} \text{ or }
			10^{-4},\;\varepsilon^{\text{abs}}>0$ 
			\State \textbf{Initialization:}
			$U,Z,D,P,Q,S,R_1,R_2\equiv 0,\;\varepsilon_1^{\text{pri}} =
			\sqrt{MN}\;\varepsilon^{\text{abs}},\varepsilon_2^{\text{pri}} =
			\sqrt{MT}\;\varepsilon^{\text{abs}},$
			\State $\varepsilon^{\text{dual}} =	\sqrt{MN}\;\varepsilon^{\text{abs}}$ 
			\While{$\lVert
			R_1\rVert_F>\varepsilon_1^{\text{pri}}\;\text{\textbf{and}}\;\lVert
			R_2\rVert_F>\varepsilon_2^{\text{pri}}\;\text{\textbf{and}}\;\lVert S\rVert_F>\varepsilon^{\text{dual}}$}
 				\State $D^{\text{old}}=D$;
 				\State $Z^{\text{old}}=Z$;
 				\Statex								\Comment{Main Part}
 				\State $U = \left(\lambda\left(D-P\right)
 				+\mu\left(Z-Q\right)B\right)\left(\lambda I+\mu B^T B\right)^{-1}$;
 				\State $D =
 				\argmin{D\in G}\;\frac{\lambda}{2}\left\|D - U + P\right\|_F^2 +
 				\beta\sum\limits_{i=1}^M\sum\limits_{j=1}^N d_{ij} \quad\text{s.t.}\quad
 				\sum\limits_{j=1}^N d_{ij}\leq v\,\forall i$;	\Comment{see Appendix
 													\ref{sec:positive_ell1_projection}}
 				\State $Z = \left(A^T A + \mu I\right)^{-1} \left(A^TW +
 				\mu\left(UB^T + Q\right)\right)$; 
 				\Statex								\Comment{Update Residuals}
				\State $S = \lambda\left(D^{\text{old}}-D\right) +
				\mu\left(Z^{\text{old}}-Z\right)B$; 
				\State $R_1 = \lambda\left(D-U\right)$;
				\State $R_2 = \mu\left(Z-UB^T\right)$;
				\Statex								\Comment{Lagrange Updates}
				\State $P = P - \left(D-U\right)$;
				\State $Q = Q -\left(Z-UB^T\right)$;
	\algstore{bkbreak}
	\end{algorithmic}
\end{algorithm}
	~\\
\begin{algorithm}[htp]
	\begin{algorithmic}[1]
	\algrestore{bkbreak}
				\Statex								\Comment{Varying Penalty/Lagrange Parameters}
				\If{$\lVert R_1\rVert_F>\eta_1\lVert S\rVert_F$}
 					\State $\lambda = \lambda\tau_1^{\text{incr}}$;
 					\State $P = \frac{P}{\tau_1^{\text{incr}}}$;
				\ElsIf{$\lVert S\rVert_F>\eta_1\lVert R_1\rVert_F$}
 					\State $\lambda = \frac{\lambda}{\tau_1^{\text{decr}}}$;
 					\State $P = P\tau_1^{\text{decr}}$;
				\EndIf
				\If{$\lVert R_2\rVert_F>\eta_2\lVert S\rVert_F$}
 					\State $\mu = \mu\tau_2^{\text{incr}}$;
 					\State $Q = \frac{Q}{\tau_2^{\text{incr}}}$;
					\ElsIf{$\lVert S\rVert_F>\eta_2\lVert R_2\rVert_F$}
 					\State $\mu = \frac{\mu}{\tau_2^{\text{decr}}}$;
 					\State $Q = Q\tau_2^{\text{decr}}$;
				\EndIf
				\Statex								\Comment{Stopping Criteria}
				\State  $\varepsilon_1^{\text{pri}} =
				\sqrt{MN}\;\varepsilon^{\text{abs}} +
				\varepsilon^{\text{rel}}\max \left\{\lVert U^k\rVert_F,\lVert D^k\rVert_F,0\right\}$;
				\State	$\varepsilon_2^{\text{pri}} =
				\sqrt{MT}\;\varepsilon^{\text{abs}} +
				\varepsilon^{\text{rel}}\max \left\{\lVert U^kB^T\rVert_F,\lVert Z^k\rVert_F,0\right\}$;
				\State  $\varepsilon^{\text{dual}} =
				\sqrt{MN}\;\varepsilon^{\text{abs}} +
				\varepsilon^{\text{rel}}\lVert\lambda P^k + \mu Q^k B\rVert_F$;
 			\EndWhile
			\State \textbf{return} $U$		\Comment{Solution of \eqref{eq:prob_for_algorithm}}
	\end{algorithmic}
\end{algorithm}

\section{Computational Experiments}
\label{sec:numerical_results}

In the last sections we have analyzed $\ell^{1,\infty}$-regularized
variational models and its reformulations. Moreover, we have deduced an
algorithm for the computation of a solution for the
$\ell^{1,\infty}$-regularized minimization problem.\\
In this section we propose dynamic positron emission tomography for the
visualization of myocardial perfusion as a possible application. To incorporate
knowledge about this application, we include \emph{kinetic modeling} in order to
model the blood flow and tracer exchange in the heart muscle.
After an introduction to the corresponding medical and mathematical background,
we show some results for synthetic examples and discuss the quality of our approach.

\subsection{Application to Dynamic Positron Emission Tomography}
\label{sec:applications}

Positron Emission Tomography (PET) is an imaging technique used in nuclear
medicine that visualizes the distribution of a radioactive tracer, which was
applied to the patient. Compared to computer tomography (CT), PET has the
advantage of being a functional rather than a morphological imaging technique. 

By using radioactive water ($H_2^{15}O$) as a tracer, it is possible to
visualize blood flow. $H_2^{15}O$ has the advantage of being highly diffusible and the
radiation exposure is low. Even dynamic images are possible. On the other hand
the reconstructed images have poor quality due to the short radioactive
half-life of $H_2^{15}O$.

Now let us consider the inverse problem of dynamic PET, i.e.
\begin{align}
	\mathcal{A}Z = W \; ,
	\label{eq:inv_prob}
\end{align}
where the operator $\mathcal{A}$ linking the dynamic image $Z$ with the measured
data $W$ is usually the \emph{Radon operator}, but could also be another
operator depending on the application.

By using kinetic modeling (cf. \cite[Chapter 23, p. 499 et seqq.]{Wernick2004})
we are able to describe the unknown image $Z$ as the tracer concentration in the
tissue $C_T$, i.e.
\begin{align}
	C_T(x,t) = F(x)\int\limits_0^t \!
	C_A(\tau)e^{-\frac{F(x)}{\lambda}(t-\tau)} \, \mathrm{d}\tau\; , 
	\label{eq:tracer_conc_tissue}
\end{align}
where $C_A(\tau)$ is the arterial tracer concentration, also known as input
curve, $F(x)$ refers to the perfusion and $\lambda$ is the ratio between the
tracer concentration in tissue and the venous tracer concentration resulting
from Fick's principle.
\begin{figure}[htb]
	\begin{center}
		\definecolor{wwccff}{rgb}{0.0,0.4,0.6}
\definecolor{ccwwqq}{rgb}{0.8,0.4,0.0}
\begin{tikzpicture}[scale=0.7,>=latex]
	\clip (-5.0,-0.15) rectangle (5.0,3.15);
	\shade [inner color=ccwwqq!80!white, outer color=ccwwqq!30!white
	] (-5.0,0.0) rectangle (5.0,1.5);
	\shadedraw [top color=wwccff!15!white, bottom color=wwccff!60!white, draw =
	black, line width=1.2pt] (-3.0,3.0) rectangle (3.0,1.5);
	\draw [line width=1.2pt] (-5.0,0.0) -- (5.0,0.0);
	\draw [line width=1.2pt] (-5.0,1.5) -- (5.0,1.5);
	\draw [->,color=red!50!black, line width=1.6pt] (-2.5,0.7) -- (2.5,0.7);
	\draw [->,color=mygreen, line width=1.6pt] (1.3,0.9) -- (1.3,2.7);
	\node at (-0.7,2.25) {Tissue $C_T$};
	\node at (0.0,1.1) {Blood};
	\node at (-3.5,0.7) {$C_A$};
	\node at (3.5,0.7) {$C_V$};
	\node at (0.0,0.35) {$F$};
	\node at (1.7,1.9) {$J_T$};
\end{tikzpicture}
		\caption{Illustration of kinetic modeling}
		\label{fig:Capillary_microcirculation}
	\end{center}
\end{figure}
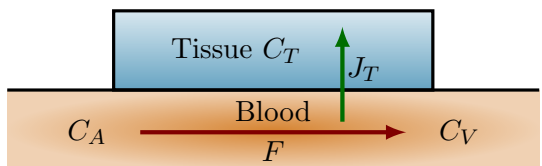

Kinetic modeling describes the tracer exchange with the tissue in the
capillaries. The tracer is injected and flows from the arteries with
concentration $C_A$ to the veins with concentration $C_V$.
While passing the capillaries between arteries and veins, a part of it
moves across the vascular wall with flux $J_T$ into the tissue, cf. Figure
\ref{fig:Capillary_microcirculation}.

Expression \eqref{eq:tracer_conc_tissue} is an integral equation including the
exponential factor $e^{-\frac{F(x)}{\lambda}(t-\tau)}$, which depends on
both input arguments, i.e. time $t$ and space $x$. 
This expression is highly nonlinear and thus not easy to handle especially
in combination with inverse problems. 
Due to the fact that we have prior knowledge about $\frac{F(x)}{\lambda}$,
i.e. that its value lies within certain parameters, we are
able to provide a big pool of given perfusion values for this expression,
which we denote by $\tilde{b}_j$.
Subsequently, we are able to consider a linearization, i.e.  
	\begin{equation}
		\mathcal{B}(u,C_A):=\sum\limits_{j=1}^N u_j(x)\underbrace{\int\limits_0^t \!
		C_A(\tau)e^{-\tilde{b}_j(t-\tau)} \, \mathrm{d}\tau}_{b_j(t)}\; , 
	\label{eq:kinetic_modeling_operator}
	\end{equation}
where $u_j(x)$ shall denote an approximation to the perfusion value
$F(x)$ corresponding to $\tilde{b}_j$.
Note that the integral is now independent of space.
Expression \eqref{eq:kinetic_modeling_operator} is reasonable if there is at
most one $u_j\neq 0$ for $j\in\left\{1,...,N\right\}$, i.e. the coefficient
$u_j$ corresponding to the correct perfusion value $\tilde{b}_j$.
In order to further simplify the work with this operator, we assume that the
input curve $C_A$ is predetermined.

Hereby we obtain the linear kinetic modeling operator
\begin{equation}
	\mathcal{B}(u)=\sum\limits_{j=1}^N u_j(x)b_j(t)\; ,
\label{eq:linear_operator}
\end{equation}
which we use to describe the unknown image $Z$.
The advantage of \eqref{eq:linear_operator} over \eqref{eq:tracer_conc_tissue}
is that we are able to compute the basis functions $b_j(t)$ in advance and thus
we can provide many of those for the reconstruction.\\
Note that there exists another deduction of \eqref{eq:linear_operator} by
\cite{AJReader2007}.

By considering a discretization of \eqref{eq:linear_operator},
we obtain
\begin{align}
	\left(UB^T\right)_{ik} = \sum\limits_{j=1}^N u_{ij}b_{kj}\; ,
	\label{eq:discretized_operator}
\end{align}
where $i\in\{1,\ldots,M\}$ denotes the pixel and
$k\in\{1,\ldots,T\}$ the time step.
After discretizing $\mathcal{A}$ as well, we can insert \eqref{eq:discretized_operator} for the
image $Z$ in \eqref{eq:inv_prob} and obtain
\begin{align}
	AUB^T = W \; .
	\label{eq:new_inv_prob}
\end{align}
Hence \eqref{eq:Uv_unconstr_prob} can be used for the
reconstruction of the discretized coefficients $u_{ij}$.\\

\subsection{Results}

In this section we present some numerical results.
We are going to work on synthetic data to investigate the
effectiveness of the approach.
In order to do so, we use a simple 3D matrix $\widehat{U}$ containing the exact
coefficients as ground truth, i.e. two spatial dimensions $M := m_1m_2$
and one extra dimension referring to the number of basis vectors $N$.
Defining two regions, where for only one basis vector the coefficients are
nonzero, yields the fact that the corresponding coefficients for most of the
basis vectors are zero.
Obviously our ground truth fulfills the prior knowledge, which we would like
to promote in the reconstruction, i.e. there is only one coefficient per pixel,
which is unequal to zero.

\begin{figure}[htbp]
	\begin{center}
  		\includegraphics[width=1\textwidth]{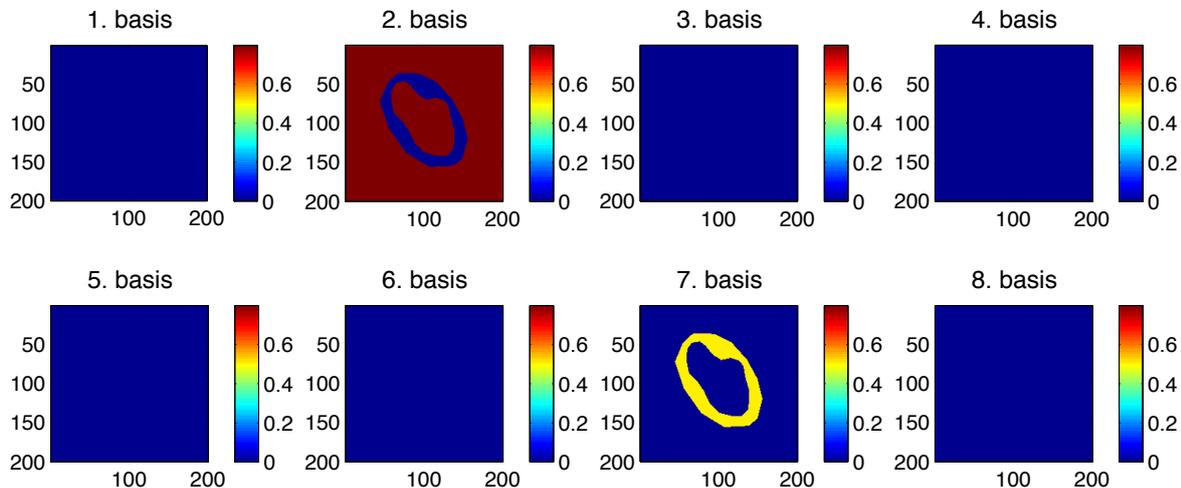}
	  \caption{Ground truth $\widehat{U}\in\R{200\times 200\times 8}$ with $200^2$
	  pixels and $8$ basis vectors}
  	  \label{fig:ground_truth}
	\end{center}
\end{figure}
In Figure \ref{fig:ground_truth} we see that the exact coefficients for the most
basis vectors are zero. Only some coefficients corresponding to the second and seventh
basis vectors are nonzero.
In order to obtain the artificial data $W\in\R{L\times T}$, we have to apply the
matrices $A\in\R{L\times M}$ and $B^T\in\R{N\times T}$ to the ground truth
$\widehat{U}\in\R{M\times N}$.\\
 
\begin{figure}[htbp]
	\begin{center}
	  	\includegraphics[width=0.6\textwidth]{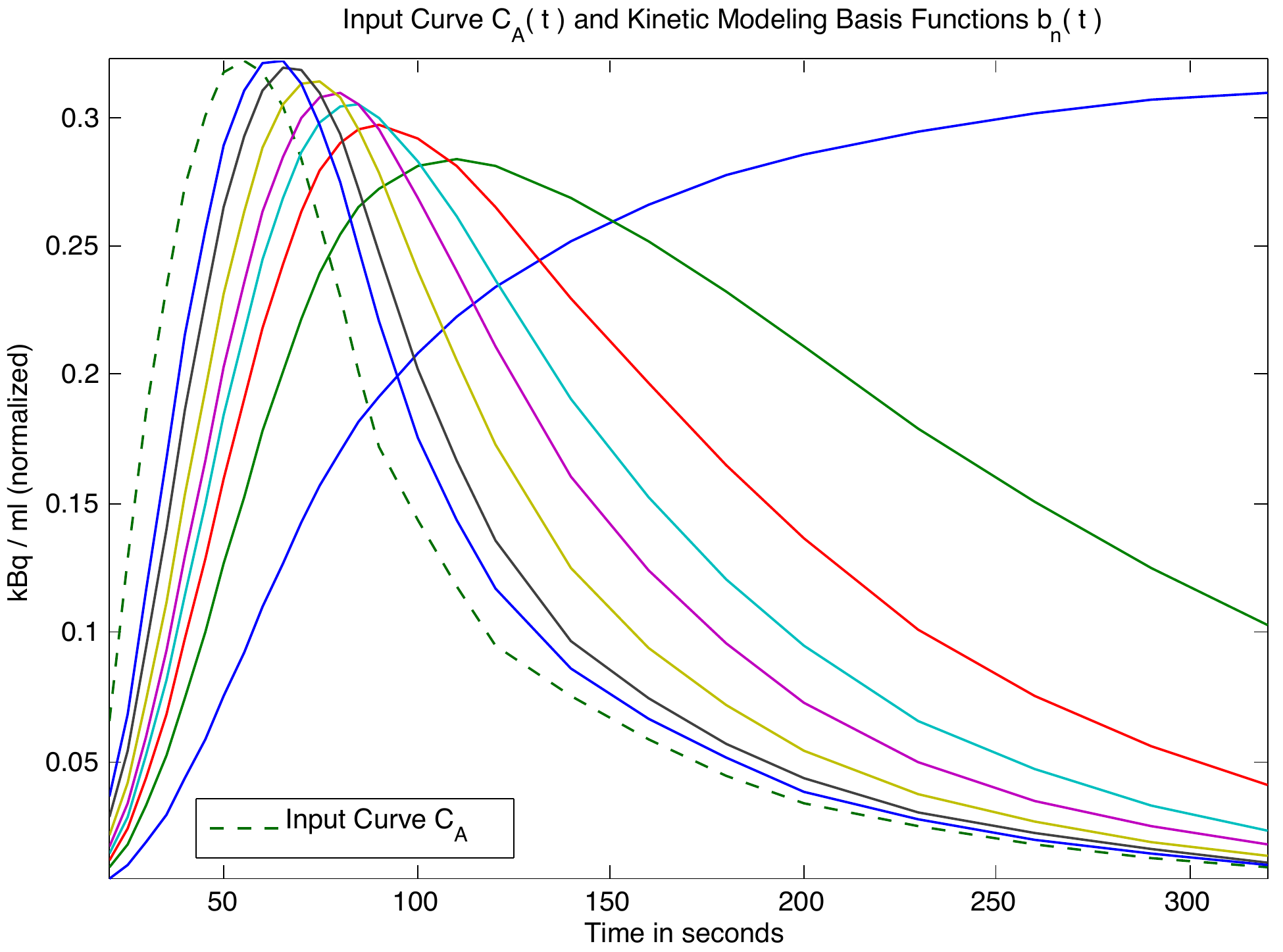}
	  \caption{Kinetic modeling basis vectors $B^T$}
	  \label{fig:dPET-Basis}
	\end{center}
\end{figure}

As an example for $B$ we use kinetic modeling basis vectors as they are used in
dynamic positron emission tomography (cf. Section \ref{sec:applications} and
\cite[Chapter 23]{Wernick2004}), which are basically discretized exponential
functions with different parameters.
In Figure \ref{fig:dPET-Basis} we observe that those basis vectors are very
similar, i.e. the mutual incoherence parameter (cf.
\eqref{eq:coherence_parameter}) is large.
For the verification of our approach including local sparsity, we use a simple
2D convolution in space for the matrix $A$ as a simplification.
In future work, however, the Radon operator shall be used instead.

By using Algorithm \ref{alg:L2_ell1_infty_via_ALM} on the so computed
data $W$ including a strong $\ell^{1,\infty}$-regularization, i.e.
$\tilde{v}=0.1$, we obtain a very good reconstruction of the support.

\begin{figure}[htbp]
	\begin{center}
	  	\includegraphics[width=1\textwidth]{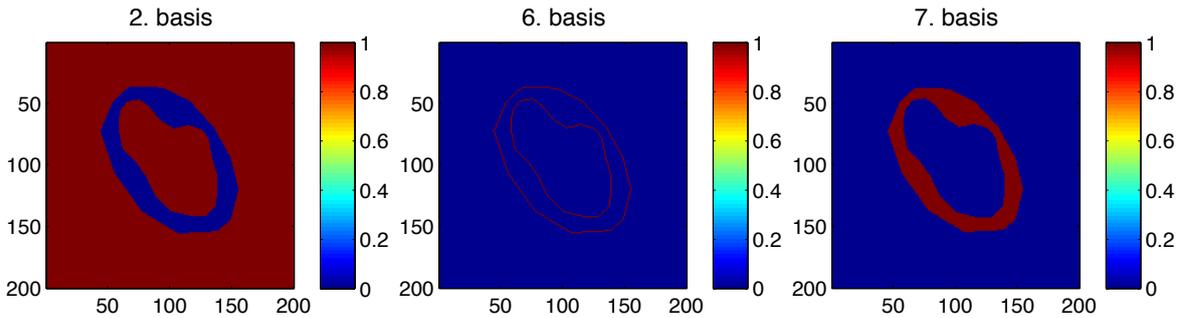} 
	  \caption{2nd, 6th and 7th reconstructed coefficient matrices using
	  $\tilde{v}=0.01$ and $\beta=0.1$}
	  \label{fig:Usupp_2_6_7_basis_heart}
	\end{center}
\end{figure}
Figure \ref{fig:Usupp_2_6_7_basis_heart} only shows the coefficient matrices to
those basis vectors, which include reconstructed nonzero coefficients. For
simplicity we do not show the other reconstructed coefficient matrices, which
are completely zero.
Obviously we obtain a very good reconstruction of the support.
Only a few coefficients, which actually correspond to the seventh basis vector
(dark brown), are reconstructed wrongly and show up in the sixth basis
vector (light green). This is due to the coherence of the basis vectors,
i.e. the sixth (light green) and the seventh (dark brown) basis
vector are very similar, compare for instance Figure \ref{fig:dPET-Basis}.

We observe that every value larger than $\tilde{v}$ is projected down to
$\tilde{v}$ and we make a systematic error.
This is due to the inequality constraint in problem
\eqref{eq:prob_for_algorithm} and because of the fact that we chose $\tilde{v}$
smaller than the maximal value of the exact data $\widehat{U}$ (compare for
instance Subsection \ref{equivalence_of_formulations} and especially Theorem
\ref{th:problem_implementation}). Thus we are not really close to the exact
data.
In order to overcome this problem, we first reconstruct the support including
the $\ell^{1,\infty}$- and $\ell^{1,1}$-regularization and then perform a second run
without regularization \emph{only on the known support} to reduce the distance
to the exact data.

\begin{figure}[htbp]
	\begin{center}
	  	\includegraphics[width=1\textwidth]{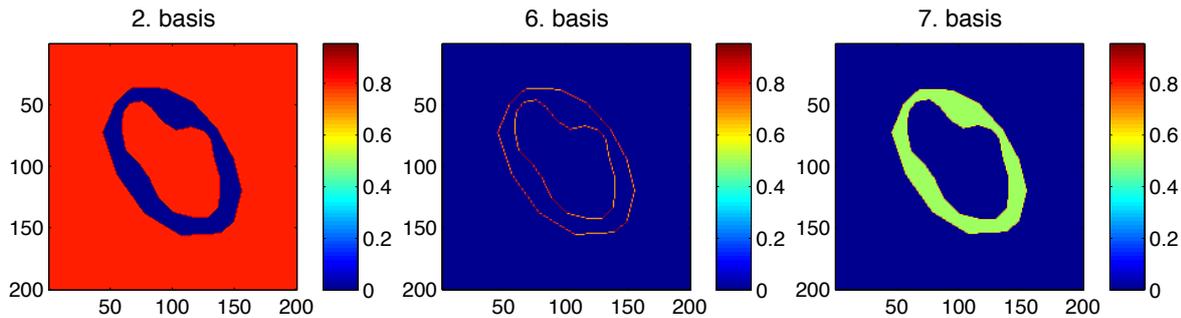} 
	  \caption{2nd, 6th and 7th reconstructed coefficient matrix using
	  $\tilde{v}=0.01$ and $\beta=0.1$ including a second run only on the
	  support; the other coefficient matrices are completely zero}
	  \label{fig:U_2_6_7_basis_heart}
	\end{center}
\end{figure}
In Figure \ref{fig:U_2_6_7_basis_heart} we see that this approach leads to very
good results.

We additionally reconstructed an example including some Gaussian noise.
In Figure \ref{fig:U_exactdata7_A1_B1_v001_eps01_sigma001_2ndRun1_kern16} we
observe that the algorithm performs quite nicely.\\

\begin{figure}[htbp]
	\begin{center}
	  	\includegraphics[width=1\textwidth]{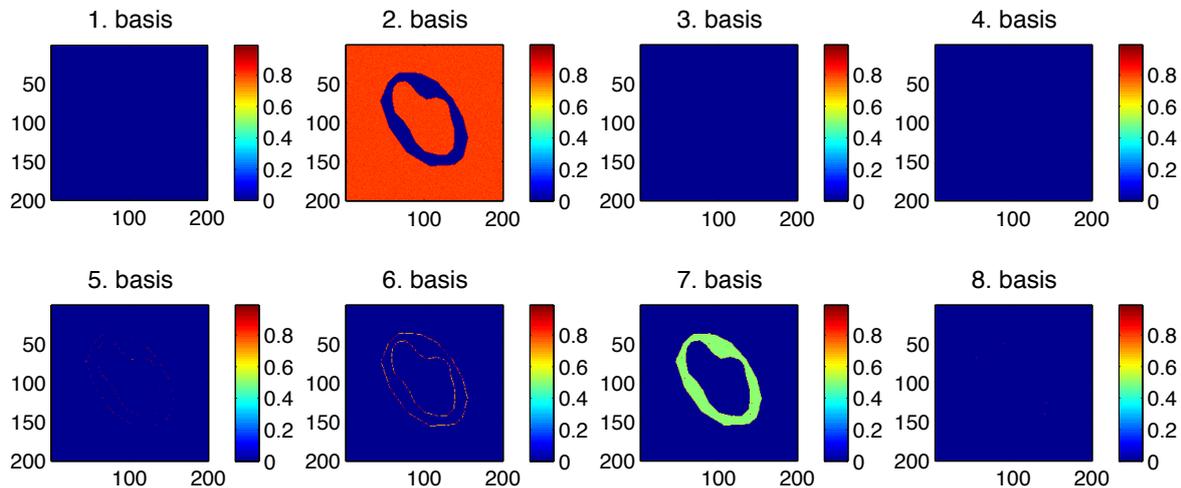}
	  	\caption{Reconstruction using $\tilde{v}=0.01$ and $\beta=0.1$ including
	  	Gaussian noise with standard deviation $\sigma = 0.01$}
	  \label{fig:U_exactdata7_A1_B1_v001_eps01_sigma001_2ndRun1_kern16}
	\end{center}
\end{figure}

Let us now evaluate Algorithm \ref{alg:L2_ell1_infty_via_ALM} with respect to
the quality of the reconstructed support.
In order to do so, we compare the reconstructed support after the first run
(including both regularizations) with the support of our ground truth and state
how much percent of the true support is reconstructed wrongly depending on the
$\ell^{1,\infty}$-regularization parameter $\tilde{v}$. We also include the
distance of the wrongly picked basis vector in each pixel, for instance if the
support of the ground truth picks basis vector number 7 and the reconstructed
support picks basis vector number 5 instead, we double the influence of the
error in this pixel if the reconstructed support picks basis vector number 4
instead of the correct number 7 we triple it and so on.

\begin{table*}[htp]
	\centering
		\begin{tabular}{| c | c | c | }
		  \hline
						& Percentage of & Number of \\
			$\tilde{v}$	& wrong pixel	& iterations \\
		  \hline
			$10^{-1}$ & 0.6722 \% & 262 \\
			$10^{-2}$ & 0.1772 \% & 350 \\
			$10^{-3}$ & 0.1772 \% & 431 \\
			$10^{-4}$ & 0.1772 \% & 512 \\
			$10^{-5}$ & 0.1772 \% & 592 \\
			$10^{-6}$ & 0.1772 \% & 663 \\
			$10^{-7}$ & 0.1772 \% & 672 \\
		  \hline
		\end{tabular}
	\caption{Evaluation of Algorithm \ref{alg:L2_ell1_infty_via_ALM} with $\beta =
	0.1$, $\lambda = 0.5$ and $\mu = 0.1$ }
	\label{tab:evaluation_no_noise}
\end{table*}
In Table \ref{tab:evaluation_no_noise} we see the evaluation of Algorithm
\ref{alg:L2_ell1_infty_via_ALM} applied to the noiseless data $W$.
When $\tilde{v}$ becomes smaller than $0.01$ we observe that there is no further
improvement.
As we have seen in Figure \ref{fig:Usupp_2_6_7_basis_heart} the boundary of the
region is reconstructed wrongly and the algorithm selects the sixth instead of
the seventh basis function.
However, the prior knowledge is already fulfilled, i.e. in every pixel there is
only one basis vector active.
This is the reason why there are still $0.1772$\% wrongly reconstructed
pixel and we do not obtain further improvement cannot be
achieved.

\begin{table*}[htp]
	\centering
		\begin{tabular}{| c | c | c | }
		  \hline
						& Percentage of & Number of \\
			$\tilde{v}$	& wrong pixel	& iterations \\
		  \hline
			$10^{-1}$ & 3.6997 \% & 262 \\
			$10^{-2}$ & 0.2628 \% & 350 \\
			$10^{-3}$ & 0.1991 \% & 431 \\
			$10^{-4}$ & 0.1875 \% & 512 \\
			$10^{-5}$ & 0.1897 \% & 592 \\
			$10^{-6}$ & 0.1925 \% & 663 \\
			$10^{-7}$ & 0.1859 \% & 672 \\
		  \hline
		\end{tabular}
	\caption{Evaluation of Algorithm \ref{alg:L2_ell1_infty_via_ALM} with $\beta =
	0.1$, $\lambda = 0.5$ and $\mu = 0.1$ including Gaussian noise with standard
	deviation $\sigma = 0.01$ }
	\label{tab:evaluation_noise_0.01}
\end{table*}
In Table \ref{tab:evaluation_noise_0.01} and \ref{tab:evaluation_noise_0.05} we
have the same error measures for different values of $\tilde{v}$ as in
Table \ref{tab:evaluation_no_noise}. However, this time we
included Gaussian noise on the data $W$ with standard deviation $0.01$,
$0.05$ respectively.
At first the error drops quickly. However, when $\tilde{v}$ becomes smaller the
error stagnates in a certain range similar to the noise-free case.

\begin{table*}[htp]
	\centering
		\begin{tabular}{| c | c | c | }
		  \hline
						& Percentage of & Number of \\
			$\tilde{v}$	& wrong pixel	& iterations \\
		  \hline
			$10^{-1}$ & 6.4534 \% & 262 \\
			$10^{-2}$ & 1.5500 \% & 350 \\
			$10^{-3}$ & 1.0353 \% & 431 \\
			$10^{-4}$ & 1.0084 \% & 512 \\
			$10^{-5}$ & 0.9725 \% & 592 \\
			$10^{-6}$ & 1.0044 \% & 663 \\
			$10^{-7}$ & 0.9625 \% & 672 \\
		  \hline
		\end{tabular}
	\caption{Evaluation of Algorithm \ref{alg:L2_ell1_infty_via_ALM} with $\beta =
	0.1$, $\lambda = 0.5$ and $\mu = 0.1$ including Gaussian noise with standard
	deviation $\sigma = 0.05$ }
	\label{tab:evaluation_noise_0.05}
\end{table*}
In order to smartly choose $\tilde{v}$, we have to find a good tradeoff between
a small error and a small number of iterations.
Choosing $\tilde{v}\in\left[10^{-4},\ldots,10^{-3}\right]$ seems to be a good choice.

\section{Conclusions}

For the solution of inverse problems, where the unknown is considered to be a
matrix, mixed $\ell^{p,q}$-norms can be used as regularization functionals in
order to promote certain structures in the reconstructed matrix.
Motivated by dynamic positron emission tomography for myocardial
perfusion, we proposed a novel variational model for a dictionary based matrix
completion problem incorporating local sparsity via
$\ell^{1,\infty}$-regularization as an alternative to the more commonly
considered joint sparsity model \cite{Fornasier2008, Teschke2007}.
We not only analyzed the existence and potential uniqueness of a solution, but
also investigated the subdifferential of the $\ell^{1,\infty}$-functional and a
source condition.
One of the main results of this paper consists of the deduction of an
equivalent formulation, which not only simplifies the analysis of the
problem, but also facilitates its numerical implementation.
Moreover, we discussed exact recovery for locally 1-sparse solutions by 
analyzing the noise-free case, in which we considered the minimization of the
nonnegative $\ell^{1,\infty}$-functional with an equality constraint in the
data fidelity term.
As a result of this analysis, we discovered that the dictionary matrix has to
be normalized in a certain way in order to exactly reconstruct locally 1-sparse
data under simplified conditions.\\
In this paper, a novel implementation of the problem was developed that relies
on a double splitting via the alternating direction method of multipliers (ADMM).
The algorithm yields superior results, in particular an almost exact recovery of
the true support of the solution.
Nevertheless, one drawback of the reformulation of the problem we introduced is
that the results are not very close to the true solution.
However, having a good estimate of the support of the solution allows us to
refine our first result by solving the inverse problem restricted to the
previously recovered support with no further regularization. 
This second result shows promising features, even in the presence of Gaussian
noise.\\
However, for some coefficients at the boundary of the exact nonzero region the
algorithm still picked the wrong basis vector.
In order to overcome this problem and to further improve the results, one can
add a total variation term to the variational regularization scheme, as we did
in \cite{Heins2014}.
Due to the additional regularization, which was not discussed in this
paper, the choice of a good combination of regularization parameters is
challenging, however, the results are promising.
By including total variation on the coefficient matrices, the reconstructions
improve even more and better results can be recovered.\\
In summary, the results obtained by our approach even without using total
variation are very satisfactory and could be even improved by incorporating an
additional total variation regularization on the coefficient matrices.
Our results motivate to investigate the model further, especially in
combination with a total variation regularization, which moreover makes the
algorithm more robust to noise.
Further research on parameter choice rules may eventually turn the approach
including total variation into an effective reconstruction scheme for practical
applications.

\bibliographystyle{plain}
\bibliography{bibliography}

\appendix

\section{Exact \texorpdfstring{$\ell^1$}{l1}-Reconstruction of 1-Sparse
Signals in 1D}
\label{sec:exact_reconstruction_of_a_delta_peak_in_1D}

In order to gain some understanding into suitable and necessary scaling
conditions recovering locally $1$-sparse solutions, we first consider the
simplest case, namely $M=1$, when the problem reduces to standard
$\ell^1$-minimization:

\begin{thm}[Exact Reconstruction of a 1-sparse Signal in 1D]
\label{th:exact_reconstruction_in_1D}
	~\\
	Let the vector $w:=e_j^TB^T=b_j^T$ be the $j$th basis vector
	and let $c=1-\left(\alpha+\beta\right)$ hold for
	$\left(\alpha+\beta\right)\in(0,1)$.\\
	If $\hat{u}=ce_j^T$ is the solution of \eqref{eq:varmodel1},
	then the matrix $B$ has to meet the scaling condition
	\begin{align}
		\lVert \gamma b_n\rVert_{\ell^2}=1\qquad\text{and}\qquad\left|\left\langle
		\gamma b_n,\gamma b_m\right\rangle\right|\leq 1 \quad\text{for}\quad n\neq
		m\; .
	\tag{\ref{eq:1D_peak_L2_exact_recovery}}
	\end{align}
\end{thm}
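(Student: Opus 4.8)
The plan is to exploit that the hypothesis $M=1$ collapses both mixed norms to a single $\ell^1$-term, so that \eqref{eq:varmodel1} degenerates into a classical weighted $\ell^1$-regularised least-squares problem; the scaling condition \eqref{eq:1D_peak_L2_exact_recovery} is then read off directly from the subgradient optimality condition evaluated at the prescribed minimiser $\hat u = ce_j^T$. First I would record the reduction. With $M=1$ the unknown $U$ is a single row $u=(u_1,\dots,u_N)$, and both $\norm{U}_{1,\infty}$ and $\norm{U}_{1,1}$ reduce to $\norm{u}_1=\sum_n|u_n|$; taking the forward operator to be the scalar $A=1$ (a single detector, $L=1$), the data equation reads $uB^T=w=b_j^T$. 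Transposing the $1\times T$ residual into a column of $\R{T}$ so that the weight $\gamma\in\R{T\times T}$ acts on it, problem \eqref{eq:varmodel1} becomes
\begin{equation*}
	\min_{u\in\R{N}} \; \frac{1}{2}\norm{\sum_{n=1}^N u_n\,\gamma b_n - \gamma b_j}_2^2 + \kappa\norm{u}_1, \qquad \kappa:=\alpha+\beta\in(0,1),
\end{equation*}
a standard $\ell^1$-regularised least-squares problem with weighted design vectors $\gamma b_n$ and datum $\gamma b_j$.

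Next I would invoke optimality. Since the objective is convex and $\hat u=c\,e_j^T$ (that is, $\hat u_j=c$ and $\hat u_n=0$ for $n\neq j$) is \emph{assumed} to be a minimiser, it must satisfy the componentwise subgradient condition: for every index $n$,
\begin{equation*}
	\scalpr{\gamma b_n,\; \sum_{m=1}^N \hat u_m\,\gamma b_m - \gamma b_j} + \kappa\, s_n = 0,
\end{equation*}
where $s_n=\text{sign}(\hat u_n)$ if $\hat u_n\neq 0$ and $s_n\in[-1,1]$ otherwise. The key simplification is that, using $\hat u=c\,e_j^T$ together with $c-1=-\kappa$, the residual collapses to $\sum_m\hat u_m\,\gamma b_m-\gamma b_j=(c-1)\gamma b_j=-\kappa\,\gamma b_j$.

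I would then extract the two conditions. At the \emph{active} coordinate $n=j$, positivity $c>0$ forces $s_j=1$, so the optimality relation becomes $-\kappa\norm{\gamma b_j}_2^2+\kappa=0$, and dividing by $\kappa>0$ yields the normalisation $\norm{\gamma b_j}_2=1$. At an \emph{inactive} coordinate $n\neq j$ the relation reads $-\kappa\scalpr{\gamma b_n,\gamma b_j}+\kappa s_n=0$, i.e. $s_n=\scalpr{\gamma b_n,\gamma b_j}$; the admissibility requirement $s_n\in[-1,1]$ then forces $\left|\scalpr{\gamma b_n,\gamma b_j}\right|\le 1$. Since the peak location $j$ was arbitrary, requiring the analogous reconstruction to hold for every position $j\in\{1,\dots,N\}$ upgrades these two relations into the full condition \eqref{eq:1D_peak_L2_exact_recovery}.

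The genuinely delicate points are not the algebra but the bookkeeping: one must transpose the $1\times T$ residual correctly so that $\gamma$ acts on $\R{T}$ and the weighted columns $\gamma b_n$ actually appear, and one must notice that a single fixed datum $w=b_j^T$ constrains only the index $j$ — the normalisation for \emph{all} $n$ and the off-diagonal bound for \emph{all} pairs $n\neq m$ emerge only after invoking the arbitrariness of $j$. I expect the main obstacle to be this last step of justifying that exact recovery is demanded at every position, since otherwise the fixed-$j$ argument alone yields only the conditions involving the index $j$; the sign determination $s_j=1$, coming from $c=1-\kappa>0$, is what pins the equality $\norm{\gamma b_j}_2=1$ rather than a mere inequality.
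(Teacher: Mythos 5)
Your proposal is correct and follows essentially the same route as the paper's proof: both evaluate the componentwise subgradient optimality condition of the collapsed single-row $\ell^1$-regularised problem at $\hat u = c\,e_j^T$, use $1-c=\alpha+\beta$ to reduce the residual to $-(\alpha+\beta)\gamma b_j$, and read off $\lVert\gamma b_j\rVert_2=1$ from the active index and $\left|\left\langle\gamma b_n,\gamma b_j\right\rangle\right|\leq 1$ from the inactive ones. Your explicit handling of the arbitrariness of $j$ (needed to upgrade conditions involving the single index $j$ to the condition for all pairs $n\neq m$) is in fact slightly more careful than the paper, which leaves that step implicit.
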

\begin{proof}
	~\\
	We firstly calculate the optimality condition of \eqref{eq:varmodel1} as
	\begin{align*}
		0=\left(\gamma^2\left(uB^T-w\right)B\right)_n+\left(\alpha+\beta\right)
		p_n\qquad\mathrm{with}\qquad p_n\in\partial|u_n| \; . 
	\end{align*}
	Then it follows that
	\begin{align*}
		p_n=\frac{1}{\alpha+\beta}\left(\gamma^2\left(w-uB^T\right)B\right)_n
	\end{align*}
	holds.
	Subsequently, we insert $\hat{u} = ce_j^T$ and $w = e_j^TB^T$ to obtain
	\begin{align*}
		p_n
		&=\frac{1}{\alpha+\beta}\left(\gamma^2\left(e_j^TB^T-ce_j^TB^T\right)B\right)_n\\
				&=\frac{1-c}{\alpha+\beta}\left(\gamma^2 e_j^TB^TB\right)_n \\
				&=\frac{1-c}{\alpha+\beta}\left(\gamma^2 b_j^TB\right)_n \\
				&=\frac{1-c}{\alpha+\beta}\sum\limits_{k=1}^T\gamma_t^2b_{tj}b_{tn} \\
				&=\frac{1-c}{\alpha+\beta}\left\langle\gamma b_j,\gamma b_n\right\rangle
	\end{align*}
	for every $n\in\{1,\ldots,N\}$.
	Since $p_n\in\partial\left|\hat{u}_n\right|$ has to be satisfied for
	all $n\in\{1,\ldots,N\}$, we need to ensure that
	\begin{align*}
		p_j=1\quad\mathrm{and}\quad p_i\in\left[-1,1\right]\quad\mathrm{for}\quad i\neq j
	\end{align*}
	hold, which is true under the assumptions mentioned above.
\end{proof}

For this reason we know that we have to normalize our basis vectors with respect
to the $\ell^2$-norm to reconstruct at least a $\delta$-peak exactly in one dimension. 
Note that in the one-dimensional case the $\ell^{1,\infty}$-regularization and
the $\ell^{1,1}$-regularization reduce to a single $\ell^1$-regularization with
regularization parameter $\alpha + \beta$.

We further analyze the special case of the
\emph{Kullback-Leibler approximation} (cp. \cite[pp. 58-59]{Sawatzky2011}).

\begin{thm}[Exact Recovery of a $\delta$-Peak in 1D with KL-Approximation]
\label{th:exact_reconstruction_in_1D_KL_approx}
	~\\
	Let the vector $w:=e_j^T B^T=b_j^T$ be the $j$th basis
	vector and let $c=1-\left(\alpha+\beta\right)$ hold for
	$c\in(0,1)$.\\
	In case that $\hat{u}=ce_j^T$ is the solution of
	\eqref{eq:varmodel1} with $\gamma=\frac{1}{\sqrt{w}}$, then the columns of the matrix $B$ have to
	be normalized in the $\ell^1$-norm, i.e.
	\begin{align*}
		\lVert b_n\rVert_{\ell^1}=1\qquad\forall\; n\in\left\{1,...N\right\}\; . 
	\end{align*}
\end{thm}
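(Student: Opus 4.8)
The plan is to treat this statement as the Kullback–Leibler weighted specialization of Theorem~\ref{th:exact_reconstruction_in_1D} and to carry the optimality computation through with the concrete weight $\gamma = 1/\sqrt{w}$. First I would record that in the one-dimensional setting the operator $A$ reduces to a scalar (one pixel, one detector), so that both the $\ell^{1,\infty}$- and the $\ell^{1,1}$-terms collapse to a single $\ell^1$-norm with combined parameter $\alpha+\beta$, placing us exactly in the situation of Theorem~\ref{th:exact_reconstruction_in_1D}. From there the stationarity condition of \eqref{eq:varmodel1} gives, for every $n\in\{1,\ldots,N\}$,
\begin{align*}
	p_n = \frac{1-c}{\alpha+\beta}\scalpr{\gamma b_j,\gamma b_n}, \qquad p_n\in\partial|\hat{u}_n|,
\end{align*}
and the hypothesis $c = 1-(\alpha+\beta)$ makes the prefactor equal to one, so that $p_n = \scalpr{\gamma b_j,\gamma b_n}$.

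The decisive step, which is where the argument departs from the generic $\ell^2$-scaling statement, is to insert the explicit weight. Because $w = b_j^T$, the KL-weight reads $\gamma_t^2 = 1/w_t = 1/b_{tj}$ componentwise; this presupposes $b_{tj}>0$ for all $t$, which is exactly the positivity of the data required for the Kullback–Leibler fidelity to be well defined. Substituting, the weight cancels the $j$th basis vector entirely:
\begin{align*}
	p_n = \scalpr{\gamma b_j,\gamma b_n} = \sum_{t=1}^T \frac{1}{b_{tj}}\,b_{tj}\,b_{tn} = \sum_{t=1}^T b_{tn}.
\end{align*}
Using the nonnegativity of the kinetic-modelling basis vectors, the right-hand side is precisely $\norm{b_n}_{\ell^1}$, so that $p_n = \norm{b_n}_{\ell^1}\geq 0$ \emph{independently of} $j$.

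Finally I would read off the subgradient constraints. Since $\hat{u}_j = c\in(0,1)$ is strictly positive we must have $p_j = 1$, which forces $\norm{b_j}_{\ell^1}=1$; for $n\neq j$ the requirement $p_n\in[-1,1]$ becomes $\norm{b_n}_{\ell^1}\leq 1$. As the peak may be located at any index $j$, demanding recoverability for every admissible support position applies the equality $p_j=1$ to each column in turn and yields $\norm{b_n}_{\ell^1}=1$ for all $n\in\{1,\ldots,N\}$, which is the assertion.

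The main obstacle I anticipate is conceptual rather than computational: unlike in Theorem~\ref{th:exact_reconstruction_in_1D}, the weight $\gamma$ here is not a fixed matrix but is tied to the data $w=b_j$, so one cannot simply substitute the KL-weight into the previously derived condition $\norm{\gamma b_n}_{\ell^2}=1$. One must instead redo the stationarity computation and observe the cancellation $\gamma_t^2 b_{tj}=1$ that removes the $j$-dependence. The only hypotheses needing care are the strict positivity of $b_j$ (so that $1/\sqrt{w}$ exists) and the nonnegativity of all $b_n$ (so that $\sum_t b_{tn}=\norm{b_n}_{\ell^1}$); both are natural in the dynamic-PET context that motivates the model.
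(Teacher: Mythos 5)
Your proposal is correct and takes essentially the same route as the paper: the paper likewise derives the stationarity condition of \eqref{eq:varmodel1} with $\gamma=\frac{1}{\sqrt{w}}$, observes the cancellation $\frac{1}{w}\left(w-\hat{u}B^T\right)=(1-c)\mathds{1}$, arrives at $p_n=\frac{1-c}{\alpha+\beta}\left\|b_n\right\|_{\ell^1}=\left\|b_n\right\|_{\ell^1}$, and reads off the subgradient constraints $p_j=1$ and $p_n\in[-1,1]$. Your added remarks on the implicit positivity of $b_j$, the nonnegativity of the columns (so that $\sum_t b_{tn}=\left\|b_n\right\|_{\ell^1}$), and the quantification over all peak positions $j$ to upgrade $\left\|b_n\right\|_{\ell^1}\leq 1$ to equality for every column make explicit what the paper glosses over, but they do not constitute a different argument.
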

\begin{proof}
	~\\
	We first compute the optimality condition of \eqref{eq:varmodel1}
	with $\gamma=\frac{1}{\sqrt{w}}$ as 
	\begin{align}
		0=\left(\left(\frac{1}{\sqrt{w}}\left(\frac{1}{\sqrt{w}}\left(uB^T-w\right)\right)\right)B\right)_n+\left(\alpha+\beta\right)
		p_n\qquad\mathrm{with}\qquad p_n\in\partial\left|u_n\right|\; .  
		\label{eq:opt_cond_KL_approx}
	\end{align}
	It follows that
	\begin{align*}
		p_n&=\frac{1}{\alpha+\beta}\left(\left(\frac{1}{w}\left(w-uB^T\right)\right)B\right)_n\\
		&=\frac{1}{\alpha+\beta}\left(\left(\mathds{1}_t^T-\frac{1}{w}uB^T\right)B\right)_n
	\end{align*}
	holds.
	Then we insert $\hat{u}$ and $w$ and conclude
	\begin{align*}
		p_n	&= \frac{1}{\alpha+\beta}
		\left(\left(\mathds{1}_t^T-\frac{c e_j^T B^T}{e_j^T B^T}\right)B\right)_n\\
		&= \frac{1-c}{\alpha+\beta} \left(\mathds{1}_t^TB\right)_n\\
		&= \frac{1-c}{\alpha+\beta} \sum\limits_{k=1}^T b_{tn}\\
		&= \frac{1-c}{\alpha+\beta} \lVert b_n\rVert_{\ell1}\; .
	\end{align*}
	With the assumptions mentioned above we obtain again
	$p_n\in\partial\left|\hat{u}_n\right|$.
\end{proof}

It is worth mentioning that in this case every positive solution of $Bu=w$ meets
the optimality condition \eqref{eq:opt_cond_KL_approx}, in particular every
non-sparse solution.

\section{Solving the Positive
\texorpdfstring{$\ell^{1,\infty}-\ell^{1,1}$}{ell(1,infty)-ell11}-Projection-Problem}
\label{sec:positive_ell1_projection}

We want to solve the following problem
\begin{align}
	\min\limits_{D\in G}\;\frac{\lambda}{2}\left\|D - U + P\right\|_F^2 +
	\beta\sum\limits_{i=1}^M\sum\limits_{j=1}^N d_{ij} \quad\text{s.t.}\quad
	\sum\limits_{j=1}^N d_{ij}\leq \tilde{v} \;\; \forall \,
	i\in\{1,\ldots,M\} .
	\label{eq:posell1proj-problem}
\end{align}
In order to do so, we reformulate the first part of the problem, i.e.
\begin{align*}
	&\frac{\lambda}{2}\left\|D-U+P\right\|^2_F +
	\beta\sum\limits_{i=1}^M\sum\limits_{j=1}^N d_{ij} \\
	= &\sum\limits_{i=1}^M\sum\limits_{j=1}^N
	\left(\frac{\lambda}{2}\left(d_{ij}-u_{ij}+p_{ij}\right)^2 +
	\beta d_{ij}\right) \\
	= &\sum\limits_{i=1}^M\sum\limits_{j=1}^N \frac{\lambda}{2}\left(d_{ij}^2-
	2d_{ij}\left(u_{ij}+p_{ij}\right) + \left(u_{ij}+p_{ij}\right)^2 +
	\frac{2\beta}{\lambda} d_{ij}\right) \\
	= &\sum\limits_{i=1}^M\sum\limits_{j=1}^N \frac{\lambda}{2}\left(d_{ij}^2 -
	2d_{ij}\left(u_{ij}+p_{ij}-\frac{\beta}{\lambda}\right) +
\left(u_{ij} + p_{ij}\right)^2 \right) \\
	= &\sum\limits_{i=1}^M\sum\limits_{j=1}^N
	\frac{\lambda}{2}\left(d_{ij}
	- \left(u_{ij}+p_{ij}-\frac{\beta}{\lambda}\right)\right)^2 -
	\frac{\lambda}{2}\left(\left(u_{ij}+p_{ij}
	- \frac{\beta}{\lambda}\right)^2 + \left(u_{ij}+p_{ij}\right)^2\right)\; .
\end{align*}
Since the last part of the sum is independent of $d_{ij}$, we can
consider
\begin{align*}
	\min\limits_{D\in G}\;\frac{\lambda}{2}\left\|D
	- U + P - \frac{\beta}{\lambda}\mathds{1}_{M\times N}\right\|_F^2
	\quad\text{s.t.}\quad \sum\limits_{j=1}^N d_{ij}\leq \tilde{v} \;\;
	\forall \, i\in\{1,\ldots,M\}
\end{align*}
instead. We minimize this with respect to every row independently, i.e.
\begin{align}
	\min\limits_{D\in\R{M\times N}}\;\frac{\lambda}{2}\left\|d_{(i)} - u_{(i)} +
	p_{(i)} - \frac{\beta}{\lambda}\mathds{1}_{N}\right\|_2^2
	\label{eq:subprob_in_D_rows}
\end{align}
where  $i\in\{1,\ldots,M\}$ holds and with respect to the
constraints
\begin{align}
	\left(d_{(i)}\right)_j\geq 0\quad\forall\,j\in\{1,\ldots,N\}\; ,
	\label{eq:constraint_nr_1} \tag{Constr1} \\
	\sum\limits_{j=1}^N 	\left(d_{(i)}\right)_j\leq \tilde{v} \; ,
	\label{eq:constraint_nr_2} \tag{Constr2}
\end{align}
with $d_{(i)}$ denoting the $i$th \emph{transposed} row of $D$,
for $u_{(i)}$, $p_{(i)}$ respectively.\\
In order to minimize this problem, we first consider
\eqref{eq:subprob_in_D_rows} only under \eqref{eq:constraint_nr_1}. In this case
the solution is given by
\begin{align}
	\widetilde{d}_{(i)}=\max\left\{u_{(i)}+p_{(i)} -
	\frac{\beta}{\lambda}\mathds{1}_{N},0\right\}\; .
\label{eq:sol_under_const1}
\end{align}
To include \eqref{eq:constraint_nr_2}, we have to do a
case-by-case-analysis:

~\\
\underline{Case a:}
	\begin{addmargin}[0.3cm]{0.3cm}
		Let \eqref{eq:sol_under_const1} satisfy \eqref{eq:constraint_nr_2}. In this
		case the solution of \eqref{eq:subprob_in_D_rows} under
		\eqref{eq:constraint_nr_1} and \eqref{eq:constraint_nr_2} is given by
		\begin{align*}
			d_{(i)}=\widetilde{d}_{(i)}\; .
		\end{align*}
	\end{addmargin}
	\underline{Case b:}
	\begin{addmargin}[0.3cm]{0.3cm}
			Let \eqref{eq:sol_under_const1} \emph{not} satisfy
			\eqref{eq:constraint_nr_2}, i.e.
			$\sum\limits_{j=1}^N\left(\widetilde{d}_{(i)}\right)_j>\tilde{v}$. Then the
			solution of \eqref{eq:subprob_in_D_rows} under \eqref{eq:constraint_nr_1} and
			\eqref{eq:constraint_nr_2} has to fulfill
			\begin{align*}
				\sum\limits_{j=1}^N\left(d_{(i)}\right)_j=\tilde{v} \tag{Constr3}\; .
			\label{eq:constraint_nr_2_star}
			\end{align*}
			Thus we have to solve \eqref{eq:subprob_in_D_rows} under
			\eqref{eq:constraint_nr_1} and \eqref{eq:constraint_nr_2_star}.
			For this purpose we propose the corresponding Lagrange functional as
			\begin{align}
				\begin{split}
					\mathcal{L}^{\lambda}(d_{(i)},\mu_{(i)},\vartheta) = 
					&\min\limits_{D\in\R{M\times N}}\;\frac{\lambda}{2}\left\|d_{(i)} - u_{(i)}
					+ p_{(i)} - \frac{\beta}{\lambda}\mathds{1}_{N}\right\|_2^2  \\
					&+\vartheta\left(\sum\limits_{j=1}^N\left(d_{(i)}\right)_j -
					\tilde{v}\right) - \sum\limits_{j=1}^N\left(d_{(i)}\right)_j\left(\mu_{(i)}\right)_j\; .
				\end{split}
			\label{eq:lagr_secsub}
			\end{align}
			Once we know $\vartheta$ we can compute the optimal $d_{(i)}$ as
			\begin{align}
				\begin{split}
					d_{(i)} 
					&= \mathrm{shrink}^{+}\left(u_{(i)} + p_{(i)} -
					\frac{\beta}{\lambda}\mathds{1}_{N}, \;
					\frac{\vartheta}{\lambda}\mathds{1}_{N}\right) \\
					:&= \max\left\{u_{(i)} + p_{(i)} - \frac{\beta}{\lambda}\mathds{1}_{N} -
					\frac{\vartheta}{\lambda}\mathds{1}_{N}, \;0\right\} \, .
				\end{split}
			\label{eq:sol_const2_star}
			\end{align}
			We can see this by computing the optimality condition of
			\eqref{eq:lagr_secsub}, i.e.
			\begin{align}
				\begin{split}
					0 &= \partial_{d_{(i)}}\mathcal{L}^{\lambda}(d_{(i)},\mu_{(i)},\vartheta) \\
					&= \lambda\left(d_{(i)} - u_{(i)} + p_{(i)} -
					\frac{\beta}{\lambda}\mathds{1}_{N}\right) + \vartheta\mathds{1}_{N} -
					\mu_{(i)}\; ,
				\end{split}
			\label{eq:opt_cond_lagr_secsub}
			\end{align}
			with the complementary conditions
			\begin{align*}
				\left(\mu_{(i)}\right)_j\geq
				0\qquad\mathrm{and}\qquad\left(\mu_{(i)}\right)_j\left(d_{(i)}\right)_j=0\;\;\forall
				j\in\{1,\ldots,N\}\; .
			\end{align*}
			If $\left(d_{(i)}\right)_j\neq 0$ holds, then we have
			$\left(\mu_{(i)}\right)_j=0$ and thus we obtain from \eqref{eq:opt_cond_lagr_secsub} that
			\begin{align*}
				\qquad 0 &= \lambda\left(d_{(i)} - u_{(i)} + p_{(i)} -
				\frac{\beta}{\lambda}\mathds{1}_{N}\right) + \vartheta\mathds{1}_{N} \\
				\Leftrightarrow\; d_{(i)} &= u_{(i)}
				- p_{(i)} + \frac{\beta-\vartheta}{\lambda}\mathds{1}_{N}
			\end{align*}
			has to be true.
			On the other hand, if $\left(d_{(i)}\right)_j=0$, then
			$\left(\mu_{(i)}\right)_j\geq 0$. Hence we see from \eqref{eq:opt_cond_lagr_secsub} that
			\begin{align*}
				\left(\mu_{(i)}\right)_j = \lambda\left(p_{(i)} -
				u_{(i)}\right)_j - \beta + \vartheta &\geq 0\; , \\
				\Leftrightarrow\qquad\qquad\quad
				 \left(u_{(i)} - p_{(i)}\right)_j +
				 \frac{\beta-\vartheta}{\lambda} &\leq 0\; .
			\end{align*}
			The Lagrange parameter $\vartheta$ should be chosen such that
			\eqref{eq:constraint_nr_2_star} holds. Therefore we investigate
			\begin{align*}
				\sum\limits_{j\in I}\left(\left(u_{(i)} - p_{(i)}\right)_j +
				 \frac{\beta-\vartheta}{\lambda} \right) = v\; ,
			\end{align*}
			with the set $I$ containing all indices, for which
			\begin{align}
				\left(u_{(i)} - p_{(i)}\right)_j +
				 \frac{\beta-\vartheta}{\lambda}\geq 0
			\label{eq:condition_for_set}
			\end{align}
			holds, since for all other indices $j\notin I$ the term $\;\left(u_{(i)} -
			p_{(i)}\right)_j + \frac{\beta-\vartheta}{\lambda}$ is projected to $0$ due to
			\eqref{eq:sol_const2_star}.\\
			Hence we obtain
			\begin{align*}
				\vartheta = \frac{\lambda}{\left|I\right|}\left(\sum\limits_{j\in
				I}\left(u_{(i)} - p_{(i)}\right)_j + \frac{\beta}{\lambda} -
				\tilde{v}\right)\; .
			\end{align*}
			Now we have to compute $I$.	Since we are able to sort the vectors according
			to value, it is sufficient to find $\left|I\right|$.
			Then we obtain
			\begin{align*}
				\vartheta =
				\frac{\lambda}{\left|I\right|}\left(\sum\limits_{r=1}^{\left|I\right|}\widehat{\left(u_{(i)}
				- p_{(i)}\right)}_r + \frac{\beta}{\lambda} - \tilde{v}\right)\; ,
			\end{align*}
			where $\;\widehat{\cdot}\;$ denotes the respective vector sorted according to value.\\
			In order to obtain $\left|I\right|$, we use the following\\
			\begin{thm}[{\cite[p. 3]{Duchi2008}}]
			~\\
				Let $\widehat{\left.u_{(i)} - p_{(i)}\right.}$ denote the vector obtained
				by sorting $u_{(i)} - p_{(i)}$ in a descending order.
				Then the number of indices, for which \eqref{eq:condition_for_set} holds, is
				\begin{align*}
					\left|I\right| = \max\left\{j:\lambda\widehat{\left(u_{(i)}
					- p_{(i)}\right)}_j + \beta -
					\frac{\lambda}{j}\left(\sum\limits_{r=1}^j\widehat{\left(u_{(i)}
					- p_{(i)}\right)}_r + \frac{\beta}{\lambda} - \tilde{v}\right) > 0\right\}
					\; .
				\end{align*}
			\label{th:index_nr_theorem}
			\end{thm}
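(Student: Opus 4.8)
The plan is to read the claim as the classical characterisation of the Euclidean projection onto a scaled simplex and to exploit the thresholded form of the minimiser already exposed by \eqref{eq:opt_cond_lagr_secsub}. Setting $y_{(i)} := u_{(i)} - p_{(i)} + \frac{\beta}{\lambda}\mathds{1}_N$, the row subproblem \eqref{eq:subprob_in_D_rows} under \eqref{eq:constraint_nr_1} and \eqref{eq:constraint_nr_2_star} is exactly $\min \frac{\lambda}{2}\norm{d_{(i)} - y_{(i)}}_2^2$ over the set $\{d_{(i)} \ge 0,\ \sum_j (d_{(i)})_j = \tilde v\}$. Since this objective is strictly convex and the feasible set is compact and convex, the minimiser is unique, and \eqref{eq:opt_cond_lagr_secsub} together with the complementary slackness for $\mu_{(i)}$ shows every coordinate has the form $(d_{(i)})_j = \max\{(y_{(i)})_j - \tfrac{\vartheta}{\lambda},\,0\}$. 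The whole task thus collapses to identifying the single scalar threshold $\vartheta/\lambda$, equivalently the active set $I = \{j : (d_{(i)})_j > 0\}$ and its cardinality $|I|$.

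First I would prove the prefix (support-monotonicity) lemma: if $(d_{(i)})_k > 0$ and $(y_{(i)})_\ell \ge (y_{(i)})_k$, then $(d_{(i)})_\ell > 0$. This is immediate from the thresholding formula, since $\max\{(y_{(i)})_\ell - \vartheta/\lambda, 0\} \ge \max\{(y_{(i)})_k - \vartheta/\lambda, 0\} > 0$. Because $y_{(i)}$ and $u_{(i)} - p_{(i)}$ differ only by the constant shift $\frac{\beta}{\lambda}\mathds{1}_N$, they induce the same descending order, so after sorting the active set is an initial segment $\{1,\dots,|I|\}$ of $\widehat{u_{(i)} - p_{(i)}}$. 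This is precisely what legitimises the sorted formulation and reduces the problem to finding the integer $|I|$.

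Next I would substitute the prefix structure into the equality constraint \eqref{eq:constraint_nr_2_star}: summing $(d_{(i)})_j = (y_{(i)})_j - \vartheta/\lambda$ over $j \in I$ and equating to $\tilde v$ solves for $\vartheta$ in closed form, recovering the expression for $\vartheta$ recorded just above in terms of $\sum_{r=1}^{|I|} \widehat{(u_{(i)} - p_{(i)})}_r$. Plugging the candidate threshold obtained under the hypothesis ``the support has length $j$'' back into the activity test \eqref{eq:condition_for_set} for the $j$th largest coordinate yields exactly the scalar inequality
\begin{align*}
	\lambda\,\widehat{(u_{(i)} - p_{(i)})}_j + \beta - \frac{\lambda}{j}\left(\sum_{r=1}^j \widehat{(u_{(i)} - p_{(i)})}_r + \frac{\beta}{\lambda} - \tilde v\right) > 0
\end{align*}
appearing in the statement, so that $|I|$ is characterised as the largest $j$ passing this test.

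The crux, and the step I expect to be the main obstacle, is to show that this ``largest $j$'' is genuinely the true support size, i.e. that the per-$j$ candidate test is consistent with the one global threshold. I would argue that the scalar quantity $g(j)$ defined by the left-hand side above can switch from positive to nonpositive only once as $j$ runs from $1$ to $N$: using that $\widehat{(u_{(i)} - p_{(i)})}_j$ is nonincreasing, one shows $g(j) \le 0 \Rightarrow g(j+1) \le 0$, so $\{j : g(j) > 0\}$ is an initial segment and $|I| = \max\{j : g(j) > 0\}$ is well defined. With this $|I|$ and the corresponding $\vartheta$, it then remains to verify that the resulting $d_{(i)}$ satisfies all optimality conditions of \eqref{eq:lagr_secsub}: nonnegativity and \eqref{eq:constraint_nr_2_star} hold by construction, dual feasibility $(\mu_{(i)})_j \ge 0$ on the inactive coordinates follows from the failure of \eqref{eq:condition_for_set} there, and complementary slackness follows from the thresholding. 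Uniqueness from strict convexity then certifies this point as the minimiser. The only delicate bookkeeping is keeping track of the $\beta/\lambda$ offset through the sorting and the sign-change argument; everything else is a direct consequence of the thresholded form already derived.
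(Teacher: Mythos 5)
You should know at the outset that the paper never proves Theorem \ref{th:index_nr_theorem}: it is quoted from \cite[p.~3]{Duchi2008}, so there is no internal proof to compare yours against, and your route --- thresholded KKT form, support monotonicity under sorting so that the active set is a prefix, threshold from \eqref{eq:constraint_nr_2_star}, per-prefix activity test, once-only sign change, then KKT verification plus uniqueness --- is exactly the classical argument of the cited source. As a method it is sound. It cannot, however, be carried out as you describe, because the statement as printed is false whenever $\beta>0$, and the step of your outline claiming an exact algebraic match (``plugging the candidate threshold \dots yields exactly the scalar inequality appearing in the statement'') is where the defect is buried.

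Concretely: in Case b the sum constraint is an equality, so the penalty $\beta\sum_{j}\left(d_{(i)}\right)_j\equiv\beta\tilde v$ is constant on the feasible set (equivalently, the $\frac{\beta}{\lambda}\mathds{1}_N$ in \eqref{eq:subprob_in_D_rows} is a constant shift of the point being projected), hence the Case-b minimizer and in particular $\left|I\right|$ cannot depend on $\beta$ at all, whereas the printed test does. Writing $\widehat{\mu}_r:=\widehat{\left(u_{(i)}-p_{(i)}\right)}_r$, the correct solution of \eqref{eq:constraint_nr_2_star} over a prefix of length $j$ --- and the one your own setup $y_{(i)}=u_{(i)}-p_{(i)}+\frac{\beta}{\lambda}\mathds{1}_N$ produces --- is
\begin{align*}
	\vartheta(j)=\beta+\frac{\lambda}{j}\left(\sum_{r=1}^{j}\widehat{\mu}_r-\tilde v\right),
\end{align*}
with $\beta$ \emph{not} divided by $j$; the paper's formula for $\vartheta$, which you claim to recover, is a typo. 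Substituting this $\vartheta(j)$ into \eqref{eq:condition_for_set} cancels every $\beta$ and yields the test $\widehat{\mu}_j-\frac{1}{j}\left(\sum_{r=1}^{j}\widehat{\mu}_r-\tilde v\right)>0$; the printed inequality exceeds it by $\beta\,\frac{j-1}{j}$ and is genuinely wrong: for $\lambda=\beta=\tilde v=1$ and $u_{(i)}-p_{(i)}=(3,2)$ the Case-b minimizer is $(1,0)$, so $\left|I\right|=1$, yet the printed test gives $1>0$ at $j=1$ and $\tfrac{1}{2}>0$ at $j=2$, returning $2$. Moreover, your sign-change lemma holds for the $\beta$-free test (there $j\,g(j)$ is non-increasing, dropping by $j\left(\widehat{\mu}_j-\widehat{\mu}_{j+1}\right)\ge 0$ per step), but it fails for the printed quantity: with $\widehat{\mu}=(2,\,0.1,\,0.1)$, $\lambda=1$, $\beta=0.5$, $\tilde v=1$, the printed test is positive at $j=1$ and $j=3$ but not at $j=2$, so its positivity set is not even an initial segment. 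Finally, even for the corrected statement your closing verification has a small gap: dual feasibility at an inactive index $j>\left|I\right|$ requires \eqref{eq:condition_for_set} to fail with the \emph{final} threshold $\vartheta\!\left(\left|I\right|\right)$, while the prefix property only gives failure of the per-$j$ test with $\vartheta(j)$; the bridge is the (easy) extra fact that beyond the prefix $\vartheta(j+1)-\vartheta(j)=\frac{\lambda}{j}\,g(j+1)\le 0$, with $g$ the $\beta$-free test. In short, your strategy is the right one, but executed faithfully it proves a corrected, $\beta$-free version of the theorem --- and refutes the version printed in the paper.
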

	\end{addmargin}

Now we are able to propose the solving algorithm.

\begin{algorithm}
\caption{Positive
\texorpdfstring{$\ell^{1,\infty}$-$\ell^{1,1}$}{l(1,infty)-l11}-Projection}
\label{alg:positive_ell1infty_ell11_projection}
	\begin{algorithmic}[1]
			\State \textbf{Parameters:} $U\in\R{M\times	N},\;P\in\R{M\times
										N},\tilde{v}>0,\;\beta>0,\;\lambda>0,\;M,N\in\mathbb{N}$ 
			\State \textbf{Initialization:} $D\equiv 0,\;\left|I\right|=0,\;\vartheta=0$
			\ForAll{$i\in\left\{1,...,M\right\}$}
					\Statex
					\State $\widetilde{d}_{(i)}=\max\left\{u_{(i)}+p_{(i)} -
							\frac{\beta}{\lambda}\mathds{1}_{N},0\right\}$;
					\Statex

					\If {$\sum\limits_{j=1}^N \widetilde{d}_{ij}\leq \tilde{v}$} 
					\Comment{Solve with \eqref{eq:constraint_nr_1} and \eqref{eq:constraint_nr_2}}
						\Statex
						\State $d_{(i)} = \widetilde{d_{(i)}}$;
						\Else \Comment{Solve with \eqref{eq:constraint_nr_1} and \eqref{eq:constraint_nr_2_star}}
						\State $\left|I\right| = \max\left\{j:\lambda\widehat{\left(u_{(i)}
								- p_{(i)}\right)}_j + \beta -
								\frac{\lambda}{j}\left(\sum\limits_{r=1}^j\widehat{\left(u_{(i)}
								- p_{(i)}\right)}_r + \frac{\beta}{\lambda} - \tilde{v}\right) >
								0\right\}$;
						\Statex
						\State $\vartheta =
								\frac{\lambda}{\left|I\right|}\left(\sum\limits_{r=1}^{\left|I\right|}\widehat{\left(u_{(i)}
								- p_{(i)}\right)}_r + \frac{\beta}{\lambda} - \tilde{v}\right)$;
						\Statex
						\State $d_{(i)} = \mathrm{shrink}^{+}\left(u_{(i)} + p_{(i)} -
								\frac{\beta}{\lambda}\mathds{1}_{N},\;
								\frac{\vartheta}{\lambda}\mathds{1}_{N}\right)$;
					\EndIf
				\EndFor
			\State \textbf{return} $D$		\Comment{Solution of \eqref{eq:posell1proj-problem}}
	\end{algorithmic}
\end{algorithm}

\section{Inequality for Stopping Criteria}
\label{sec:inequality}

In order to proof the inequality
\begin{align}
	\begin{split}
		&\frac{1}{2}\left\| AZ-W\right\|_F^2 +
		\beta\sum\limits_{i=1}^M\sum\limits_{j=1}^N d_{ij} + J(D) - Y^* \\
		\leq\; &\left\langle P^k,R_1^k\right\rangle_F + \left\langle
		Q^k,R_2^k\right\rangle_F + \left\langle U^k - U^*,S^k\right\rangle_F \; ,
	\end{split}
	\tag{\ref{eq:inequality}}
\end{align}
which is needed in Subsection \ref{sec:stopping_criteria}, we are going to adapt
the proof of \cite[Appendix A]{Boyd2010} to the case of our double splitting.

Let us consider the unscaled augmented Lagrangian
\eqref{eq:unsc_aug_lagrange}.
By definition $U^{k+1}$ minimizes
$$\mathcal{L}_{un}^{\lambda,\mu}\left(U,D^k,Z^k;\widetilde{P}^k,\widetilde{Q}^k \right) \; ,$$
$D^{k+1}$ minimizes
$$\mathcal{L}_{un}^{\lambda,\mu}\left(U^{k+1},D,Z^k;\widetilde{P}^k,\widetilde{Q}^k
\right)$$
and $Z^{k+1}$ minimizes
$$\mathcal{L}_{un}^{\lambda,\mu}\left(U^{k+1},D^{k+1},Z;\widetilde{P}^k,\widetilde{Q}^k
\right) \; .$$
We now have to examine the optimality conditions.

~\\
\underline{OPT1}:
\begin{addmargin}[0.3cm]{0.3cm}
	By starting with
	\begin{align*}
		0 &\in
		\partial_U\mathcal{L}_{un}^{\lambda,\mu}\left(U^{k+1},D^k,Z^k;\widetilde{P}^k,\widetilde{Q}^k\right)
		\\
		&= \widetilde{P}^k + \lambda(U^{k+1}-D^k) + \widetilde{Q}^kB +
		\mu(U^{k+1}B^T-Z^k)B \; ,
	\end{align*}
	we insert the Lagrange updates
	\begin{align}
		\widetilde{P}^k = \widetilde{P}^{k+1} + \lambda(D^{k+1}-U^{k+1})
		\quad\text{and}\quad \widetilde{Q}^k = \widetilde{Q}^{k+1} +
		\mu(Z^{k+1}-U^{k+1}B^T)
	\label{eq:lagrange_updates}
	\end{align}
	and obtain
	\begin{align*}
		0 \in \widetilde{P}^{k+1} + \widetilde{Q}^{k+1}B + \lambda(D^{k+1}-D^k) +
		\mu(Z^{k+1}-Z^k)B \; .
	\end{align*}
	Thus we see that $U^{k+1}$ minimizes
	\begin{align*}
		\left\langle\widetilde{P}^{k+1}+\widetilde{Q}^{k+1}B,U\right\rangle_F +
		\lambda\left\langle D^{k+1}-D^k,U\right\rangle_F + \mu\left\langle
		Z^{k+1}-Z^k,UB^T\right\rangle_F \; .
	\end{align*}
\end{addmargin}

~\\
\underline{OPT2}:
\begin{addmargin}[0.3cm]{0.3cm}
	Here we have
	\begin{align*}
		0 &\in
		\partial_D\mathcal{L}_{un}^{\lambda,\mu}\left(U^{k+1},D^{k+1},Z^k;\widetilde{P}^k,\widetilde{Q}^k\right)
		\\
		&= \beta\mathds{1}_{M\times N} - \widetilde{P}^k - \lambda(U^{k+1}-D^{k+1}) +
		\partial J(D^{k+1}) \; ,
	\end{align*}
	with $J$ as defined in \eqref{eq:positive_ell1infty}. Inserting $\widetilde{P}^k$
	from \eqref{eq:lagrange_updates} yields
	\begin{align*}
		0 \in \beta\mathds{1}_{M\times N} - \widetilde{P}^{k+1}	+ \partial J(D^{k+1})
	\end{align*}
	and hence we see that $D^{k+1}$ minimizes
	\begin{align*}
		\beta\sum\limits_{i=1}^M\sum\limits_{j=1}^N d_{ij} -
		\left\langle\widetilde{P}^{k+1},D\right\rangle_F \quad\text{s.t.}\quad
		\sum\limits_{j=1}^N d_{ij}\leq \tilde{v}, \; d_{ij}\geq 0 \; .
	\end{align*}
\end{addmargin}
\underline{OPT3}:
\begin{addmargin}[0.3cm]{0.3cm}
	In this case we compute
	\begin{align*}
		0 &\in
		\partial_Z\mathcal{L}_{un}^{\lambda,\mu}\left(U^{k+1},D^{k+1},Z^{k+1};\widetilde{P}^k,\widetilde{Q}^k\right)
		\\
		&= A^T(AZ^{k+1}-W) - \widetilde{Q}^k - \mu(U^{k+1}B^T-Z^{k+1}) \; .
	\end{align*}
	Inserting $\widetilde{Q}^k$	from \eqref{eq:lagrange_updates} yields
	\begin{align*}
		0 \in A^T(AZ^{k+1}-W) - \widetilde{Q}^{k+1} \; .
	\end{align*}
	Therefore $Z^{k+1}$ minimizes
	\begin{align*}
		\frac{1}{2}\left\|AZ-W\right\|_F^2 -
		\left\langle\widetilde{Q}^{k+1},Z\right\rangle_F \; .
	\end{align*}
\end{addmargin}
All in all it follows that
\begin{align}
	\begin{split}
		&\;\left\langle\widetilde{P}^{k+1}+\widetilde{Q}^{k+1}B,U^{k+1}\right\rangle_F +
		\lambda\left\langle D^{k+1}-D^k,U^{k+1}\right\rangle_F + \mu\left\langle
		Z^{k+1}-Z^k,U^{k+1}B^T\right\rangle_F \\
		\leq
		&\;\left\langle\widetilde{P}^{k+1}+\widetilde{Q}^{k+1}B,U^*\right\rangle_F +
		\lambda\left\langle D^{k+1}-D^k,U^*\right\rangle_F + \mu\left\langle
		Z^{k+1}-Z^k,U^*B^T\right\rangle_F
	\end{split}
	\label{eq:unequality_U}
\end{align}
and
\begin{align}
	\beta\sum\limits_{i=1}^M\sum\limits_{j=1}^N d_{ij}^{k+1} -
	\left\langle\widetilde{P}^{k+1},D^{k+1}\right\rangle_F + J(D^{k+1})
	\;\leq\;
	\beta\sum\limits_{i=1}^M\sum\limits_{j=1}^N d^*_{ij} -
	\left\langle\widetilde{P}^{k+1},D^*\right\rangle_F + J(D^*)
	\label{eq:unequality_D}
\end{align}
and
\begin{align}
	\frac{1}{2}\left\|AZ^{k+1}-W\right\|_F^2 -
	\left\langle\widetilde{Q}^{k+1},Z^{k+1}\right\rangle_F
	\;\leq\;
	\frac{1}{2}\left\|AZ^*-W\right\|_F^2 -
	\left\langle\widetilde{Q}^{k+1},Z^*\right\rangle_F
	\label{eq:unequality_Z}
\end{align}
have to hold.
Adding equations \eqref{eq:unequality_U}, \eqref{eq:unequality_D} and
\eqref{eq:unequality_Z} together leads to
\begin{align*}
	&\frac{1}{2}\left\|AZ^{k+1}-W\right\|_F^2 +
	\beta\sum\limits_{i=1}^M\sum\limits_{j=1}^N d_{ij}^{k+1} + J(D^{k+1}) -
	\frac{1}{2}\left\|AZ^*-W\right\|_F^2 -
	\beta\sum\limits_{i=1}^M\sum\limits_{j=1}^N d^*_{ij} - J(D^*) \\
	\;\leq\;
	&\left\langle\widetilde{P}^{k+1},D^{k+1}-U^{k+1} \right\rangle_F +
	\left\langle\widetilde{Q}^{k+1},Z^{k+1}-U^{k+1}B^T \right\rangle_F +
	\lambda\left\langle D^{k+1}-D^k,U^*-U^{k+1} \right\rangle_F \\ 
	&+ \mu\left\langle Z^{k+1}-Z^k,(U^*-U^{k+1})B^T \right\rangle_F +
	\left\langle\widetilde{P}^{k+1},U^*-D^* \right\rangle_F +
	\left\langle\widetilde{Q}^{k+1},U^*B^T-Z^* \right\rangle_F
\end{align*}
By using the definitions of $R_{1,2}^{k+1}$ and $S^{k+1}$ (see for instance
\eqref{eq:primal_res1},\eqref{eq:primal_res2} and \eqref{eq:dual_res}) and the
fact that we have $U^* = D^*$ and $U^*B^T = Z^*$, we finally obtain
\begin{align*}
	\frac{1}{2}\left\|AZ^{k+1}-W\right\|_F^2 +
	\beta\sum\limits_{i=1}^M\sum\limits_{j=1}^N d_{ij}^{k+1} + J(D^{k+1}) -
	Y^* 
	\leq \left\langle P,R_1^k\right\rangle_F + \left\langle Q,R_2^k\right\rangle_F +
	\left\langle S^k,U^k-U^*\right\rangle_F \; .
\end{align*}

\end{document}